\newtheorem{defn}{Definition}[section]
\newtheorem{thm}[defn]{Theorem}
\newtheorem{lem}[defn]{Lemma}
\newtheorem{conj}[defn]{Conjecture}
\newtheorem{prop}[defn]{Proposition}
\newcommand\A{\mathbb A}
\newcommand\C{\mathbb C}
\newcommand\E{\mathbb E}
\newcommand\PP{\mathbb P}
\newcommand\R{\mathbb R}
\newcommand\Q{\mathbb Q}
\newcommand\Z{\mathbb Z}
\newcommand\cC{\mathcal C}
\newcommand\cM{\mathcal M}
\newcommand\cO{\mathcal O}
\newcommand\cP{\mathcal P}
\newcommand\cX{\mathcal X}
\newcommand\fd{\mathfrak d}
\newcommand\fD{\mathfrak D}
\newcommand\fM{\mathfrak M}
\newcommand\Arg{\operatorname{Arg}}
\newcommand\Coh{\operatorname{Coh}}
\newcommand\Ext{\operatorname{Ext}}
\newcommand\Hom{\operatorname{Hom}}
\newcommand\Rea{\operatorname{Re}\,}
\newcommand\Ima{\operatorname{Im}\,}
\newcommand\ch{\operatorname{ch}}
\newcommand\Pic{\operatorname{Pic}}
\newcommand\cl{\operatorname{cl}}
\newcommand\ev{\operatorname{ev}}
\newcommand\Stab{\operatorname{Stab}}
\newcommand\D{\operatorname{D}}
\newcommand\ord{\operatorname{ord}}
\newcommand\virt{\mathrm{virt}}
\newcommand\iin{\mathrm{in}}
\title[A proof of N.\ Takahashi's conjecture]{A proof of N.\ Takahashi's conjecture for 
$(\PP^2,E)$ and a refined sheaves/Gromov-Witten correspondence \footnote{Mathematics Subject Classification: 14N35}}
\author{Pierrick Bousseau}
\date{}
\begin{document}

\begin{abstract}
We prove N.\ Takahashi's conjecture determining the contribution of each contact point in genus-$0$ maximal contact Gromov--Witten theory of $\PP^2$ relative to a smooth cubic $E$. 
This is a new example of a question in Gromov--Witten theory that can be fully solved despite the presence of
contracted components and multiple covers.
The proof relies on a tropical computation of the Gromov--Witten invariants and on the interpretation of the tropical picture as describing wall-crossing in the derived category of coherent sheaves on $\PP^2$.
 
The same techniques allow us to prove a new sheaves/Gromov--Witten correspondence, relating Betti numbers of moduli spaces of one-dimensional Gieseker semistable sheaves on $\PP^2$, or equivalently refined genus-$0$ Gopakumar--Vafa invariants of local $\PP^2$,
with higher-genus maximal contact Gromov--Witten theory of $(\PP^2,E)$. The correspondence involves the non-trivial change of variables $y=e^{i \hbar}$, where $y$ is the refined/cohomological variable on the sheaf side, and $\hbar$ is the genus variable on the Gromov--Witten side. We explain how this correspondence can be heuristically motivated by a combination of mirror symmetry and hyperkähler rotation.
\end{abstract}

\maketitle

\setcounter{tocdepth}{1}


\tableofcontents

\thispagestyle{empty}

\newpage

\begin{figure}[h!]\label{figure_scattering}
\centering
\resizebox{0.90\textwidth}{0.90\textheight}{
\rotatebox{90}{
\begin{tikzpicture}[xscale=0.6,yscale=2.7,font=\fontsize{6}{6},define rgb/.code={\definecolor{mycolor}{RGB}{#1}}, rgb color/.style={define rgb={#1},mycolor}]
\draw[->,rgb color={255,132,0}] (-30.0,-4.00) -- (-51.0,-6.00);
\draw[->,rgb color={255,132,0}] (-30.0,-4.00) -- (-48.0,-6.00);
\draw[->,rgb color={255,132,0}] (-30.0,-4.00) -- (-48.0,-6.00);
\draw[->,rgb color={255,132,0}] (-30.0,-4.00) -- (-48.0,-6.00);
\draw[->,rgb color={255,132,0}] (-30.0,-4.00) -- (-42.0,-6.00);
\draw[->,rgb color={255,132,0}] (-30.0,-4.00) -- (-36.0,-6.00);
\draw[->,rgb color={255,132,0}] (-30.0,-4.00) -- (7.00,-4.00) node[right]{};
\draw[->,rgb color={255,132,0}] (-30.0,-4.00) -- (7.00,-4.00) node[right]{};
\draw[->,rgb color={255,132,0}] (-30.0,-4.00) -- (7.00,-4.00) node[right]{};
\draw[->,rgb color={255,132,0}] (-30.0,-4.00) -- (7.00,-4.00) node[right]{};
\draw[->,rgb color={255,132,0}] (-30.0,-4.00) -- (7.00,-4.00) node[right]{};
\draw[->,rgb color={255,132,0}] (-30.0,-4.00) -- (7.00,-4.00) node[right]{};
\draw[->,rgb color={255,132,0}] (-27.0,-4.00) -- (7.00,-4.00) node[right]{};
\draw[->,rgb color={255,132,0}] (-27.0,-4.00) -- (7.00,-4.00) node[right]{};
\draw[->,rgb color={255,132,0}] (-30.0,-4.00) -- (7.00,-4.00) node[right]{};
\draw[->,rgb color={255,132,0}] (-30.0,-4.00) -- (7.00,-1.94);
\draw[->,rgb color={255,132,0}] (-30.0,-4.00) -- (7.00,-1.94);
\draw[->,rgb color={255,132,0}] (-30.0,-4.00) -- (7.00,-1.94);
\draw[->,rgb color={255,132,0}] (-30.0,-4.00) -- (7.00,-2.24);
\draw[->,rgb color={255,132,0}] (-30.0,-4.00) -- (7.00,-2.46);
\draw[->,rgb color={255,132,0}] (-30.0,-4.00) -- (7.00,-1.76);
\draw[->,rgb color={255,132,0}] (-30.0,5.00) -- (-51.0,7.00);
\draw[->,rgb color={255,132,0}] (-30.0,5.00) -- (-48.0,7.00);
\draw[->,rgb color={255,132,0}] (-30.0,5.00) -- (-48.0,7.00);
\draw[->,rgb color={255,132,0}] (-30.0,5.00) -- (-48.0,7.00);
\draw[->,rgb color={255,132,0}] (-30.0,5.00) -- (-42.0,7.00);
\draw[->,rgb color={255,132,0}] (-30.0,5.00) -- (-36.0,7.00);
\draw[->,rgb color={255,132,0}] (-30.0,5.00) -- (7.00,5.00) node[right]{};
\draw[->,rgb color={255,132,0}] (-30.0,5.00) -- (7.00,5.00) node[right]{};
\draw[->,rgb color={255,132,0}] (-30.0,5.00) -- (7.00,5.00) node[right]{};
\draw[->,rgb color={255,132,0}] (-30.0,5.00) -- (7.00,5.00) node[right]{};
\draw[->,rgb color={255,132,0}] (-30.0,5.00) -- (7.00,5.00) node[right]{};
\draw[->,rgb color={255,132,0}] (-30.0,5.00) -- (7.00,5.00) node[right]{};
\draw[->,rgb color={255,132,0}] (-27.0,5.00) -- (7.00,5.00) node[right]{};
\draw[->,rgb color={255,132,0}] (-27.0,5.00) -- (7.00,5.00) node[right]{};
\draw[->,rgb color={255,132,0}] (-30.0,5.00) -- (7.00,5.00) node[right]{};
\draw[->,rgb color={255,132,0}] (-30.0,5.00) -- (7.00,2.94);
\draw[->,rgb color={255,132,0}] (-30.0,5.00) -- (7.00,2.94);
\draw[->,rgb color={255,132,0}] (-30.0,5.00) -- (7.00,2.94);
\draw[->,rgb color={255,132,0}] (-30.0,5.00) -- (7.00,3.24);
\draw[->,rgb color={255,132,0}] (-30.0,5.00) -- (7.00,3.46);
\draw[->,rgb color={255,132,0}] (-30.0,5.00) -- (7.00,2.76);
\draw[->,rgb color={255,132,0}] (-27.0,-4.00) -- (-39.0,-6.00);
\draw[->,rgb color={255,132,0}] (-27.0,-4.00) -- (7.00,-2.38);
\draw[->,rgb color={255,132,0}] (-27.0,5.00) -- (-39.0,7.00);
\draw[->,rgb color={255,132,0}] (-27.0,5.00) -- (7.00,3.38);
\draw[->,rgb color={255,132,0}] (-22.5,-3.75) -- (7.00,-3.75) node[right]{};
\draw[->,rgb color={255,132,0}] (-24.8,-3.75) -- (7.00,-3.75) node[right]{};
\draw[->,rgb color={255,107,0}] (-22.5,4.75) -- (7.00,4.75) node[right]{};
\draw[->,rgb color={255,107,0}] (-24.8,4.75) -- (7.00,4.75) node[right]{};
\draw[->,rgb color={255,132,0}] (-24.0,-3.67) -- (7.00,-3.67) node[right]{};
\draw[->,rgb color={255,132,0}] (-24.0,-3.67) -- (7.00,-3.67) node[right]{};
\draw[->,rgb color={255,0,0}] (-24.0,-3.50) -- (7.00,-0.917);
\draw[->,rgb color={255,0,0}] (-24.0,4.50) -- (7.00,1.92);
\draw[->,rgb color={255,107,0}] (-24.0,4.67) -- (7.00,4.67) node[right]{};
\draw[->,rgb color={255,132,0}] (-22.5,-3.50) -- (-30.0,-6.00);
\draw[->,rgb color={255,132,0}] (-22.5,-3.50) -- (7.00,-3.50) node[right]{};
\draw[->,rgb color={255,132,0}] (-22.5,-3.50) -- (7.00,-3.50) node[right]{};
\draw[->,rgb color={255,132,0}] (-21.0,-3.50) -- (7.00,-3.50) node[right]{};
\draw[->,rgb color={255,132,0}] (-18.0,-3.50) -- (7.00,-3.50) node[right]{};
\draw[->,rgb color={255,132,0}] (-22.5,-3.50) -- (7.00,-3.50) node[right]{};
\draw[->,rgb color={255,132,0}] (-22.5,-3.50) -- (7.00,-2.10);
\draw[->,rgb color={255,107,0}] (-22.5,4.50) -- (-30.0,7.00);
\draw[->,rgb color={255,107,0}] (-22.5,4.50) -- (7.00,4.50) node[right]{};
\draw[->,rgb color={255,107,0}] (-22.5,4.50) -- (7.00,4.50) node[right]{};
\draw[->,rgb color={255,107,0}] (-21.0,4.50) -- (7.00,4.50) node[right]{};
\draw[->,rgb color={255,107,0}] (-18.0,4.50) -- (7.00,4.50) node[right]{};
\draw[->,rgb color={255,107,0}] (-22.5,4.50) -- (7.00,4.50) node[right]{};
\draw[->,rgb color={255,107,0}] (-22.5,4.50) -- (7.00,3.10);
\draw[->,rgb color={255,132,0}] (-20.0,-3.33) -- (7.00,-3.33) node[right]{};
\draw[->,rgb color={255,107,0}] (-20.0,4.33) -- (7.00,4.33) node[right]{};
\draw[->,rgb color={255,107,0}] (-20.0,4.33) -- (7.00,4.33) node[right]{};
\draw[->,rgb color={255,132,0}] (-16.5,-3.25) -- (7.00,-3.25) node[right]{};
\draw[->,rgb color={255,132,0}] (-18.8,-3.25) -- (7.00,-3.25) node[right]{};
\draw[->,rgb color={255,107,0}] (-16.5,4.25) -- (7.00,4.25) node[right]{};
\draw[->,rgb color={255,107,0}] (-18.8,4.25) -- (7.00,4.25) node[right]{};
\draw[->,rgb color={255,107,0}] (-18.0,-3.00) -- (-40.5,-6.00);
\draw[->,rgb color={255,107,0}] (-18.0,-3.00) -- (-36.0,-6.00);
\draw[->,rgb color={255,107,0}] (-18.0,-3.00) -- (-36.0,-6.00);
\draw[->,rgb color={255,107,0}] (-18.0,-3.00) -- (-36.0,-6.00);
\draw[->,rgb color={255,107,0}] (-18.0,-3.00) -- (-27.0,-6.00);
\draw[->,rgb color={255,107,0}] (-18.0,-3.00) -- (-18.0,-6.00);
\draw[->,rgb color={255,107,0}] (-18.0,-3.00) -- (7.00,-3.00) node[right]{};
\draw[->,rgb color={255,107,0}] (-18.0,-3.00) -- (7.00,-3.00) node[right]{};
\draw[->,rgb color={255,107,0}] (-18.0,-3.00) -- (7.00,-3.00) node[right]{};
\draw[->,rgb color={255,107,0}] (-18.0,-3.00) -- (7.00,-3.00) node[right]{};
\draw[->,rgb color={255,107,0}] (-18.0,-3.00) -- (7.00,-3.00) node[right]{};
\draw[->,rgb color={255,107,0}] (-18.0,-3.00) -- (7.00,-3.00) node[right]{};
\draw[->,rgb color={255,107,0}] (-15.0,-3.00) -- (7.00,-3.00) node[right]{};
\draw[->,rgb color={255,107,0}] (-15.0,-3.00) -- (7.00,-3.00) node[right]{};
\draw[->,rgb color={255,107,0}] (-18.0,-3.00) -- (7.00,-3.00) node[right]{};
\draw[->,rgb color={255,107,0}] (-18.0,-3.00) -- (7.00,-1.33);
\draw[->,rgb color={255,107,0}] (-18.0,-3.00) -- (7.00,-1.33);
\draw[->,rgb color={255,107,0}] (-18.0,-3.00) -- (7.00,-1.33);
\draw[->,rgb color={255,107,0}] (-18.0,-3.00) -- (7.00,-1.61);
\draw[->,rgb color={255,107,0}] (-18.0,-3.00) -- (7.00,-1.81);
\draw[->,rgb color={255,107,0}] (-18.0,-3.00) -- (7.00,-1.15);
\draw[->,rgb color={255,107,0}] (-18.0,4.00) -- (-40.5,7.00);
\draw[->,rgb color={255,107,0}] (-18.0,4.00) -- (-36.0,7.00);
\draw[->,rgb color={255,107,0}] (-18.0,4.00) -- (-36.0,7.00);
\draw[->,rgb color={255,107,0}] (-18.0,4.00) -- (-36.0,7.00);
\draw[->,rgb color={255,107,0}] (-18.0,4.00) -- (-27.0,7.00);
\draw[->,rgb color={255,107,0}] (-18.0,4.00) -- (-18.0,7.00);
\draw[->,rgb color={255,107,0}] (-18.0,4.00) -- (7.00,4.00) node[right]{};
\draw[->,rgb color={255,107,0}] (-18.0,4.00) -- (7.00,4.00) node[right]{};
\draw[->,rgb color={255,107,0}] (-18.0,4.00) -- (7.00,4.00) node[right]{};
\draw[->,rgb color={255,107,0}] (-18.0,4.00) -- (7.00,4.00) node[right]{};
\draw[->,rgb color={255,107,0}] (-18.0,4.00) -- (7.00,4.00) node[right]{};
\draw[->,rgb color={255,107,0}] (-18.0,4.00) -- (7.00,4.00) node[right]{};
\draw[->,rgb color={255,107,0}] (-15.0,4.00) -- (7.00,4.00) node[right]{};
\draw[->,rgb color={255,107,0}] (-15.0,4.00) -- (7.00,4.00) node[right]{};
\draw[->,rgb color={255,107,0}] (-18.0,4.00) -- (7.00,4.00) node[right]{};
\draw[->,rgb color={255,107,0}] (-18.0,4.00) -- (7.00,2.33);
\draw[->,rgb color={255,107,0}] (-18.0,4.00) -- (7.00,2.33);
\draw[->,rgb color={255,107,0}] (-18.0,4.00) -- (7.00,2.33);
\draw[->,rgb color={255,107,0}] (-18.0,4.00) -- (7.00,2.61);
\draw[->,rgb color={255,107,0}] (-18.0,4.00) -- (7.00,2.81);
\draw[->,rgb color={255,107,0}] (-18.0,4.00) -- (7.00,2.15);
\draw[->,rgb color={255,107,0}] (-15.0,-3.00) -- (-24.0,-6.00);
\draw[->,rgb color={255,107,0}] (-15.0,-3.00) -- (7.00,-1.78);
\draw[->,rgb color={255,107,0}] (-15.0,4.00) -- (-24.0,7.00);
\draw[->,rgb color={255,107,0}] (-15.0,4.00) -- (7.00,2.78);
\draw[->,rgb color={255,107,0}] (-11.2,-2.75) -- (7.00,-2.75) node[right]{};
\draw[->,rgb color={255,107,0}] (-13.5,-2.75) -- (7.00,-2.75) node[right]{};
\draw[->,rgb color={255,0,0}] (-13.5,-2.50) -- (-45.0,-6.00);
\draw[->,rgb color={255,0,0}] (-13.5,-2.50) -- (7.00,-0.222);
\draw[->,rgb color={255,0,0}] (-13.5,3.50) -- (-45.0,7.00);
\draw[->,rgb color={255,0,0}] (-13.5,3.50) -- (7.00,1.22);
\draw[->,rgb color={255,77,0}] (-11.2,3.75) -- (7.00,3.75) node[right]{};
\draw[->,rgb color={255,77,0}] (-13.5,3.75) -- (7.00,3.75) node[right]{};
\draw[->,rgb color={255,107,0}] (-13.0,-2.67) -- (7.00,-2.67) node[right]{};
\draw[->,rgb color={255,107,0}] (-13.0,-2.67) -- (7.00,-2.67) node[right]{};
\draw[->,rgb color={255,77,0}] (-13.0,3.67) -- (7.00,3.67) node[right]{};
\draw[->,rgb color={255,107,0}] (-12.0,-2.50) -- (-12.0,-6.00);
\draw[->,rgb color={255,107,0}] (-12.0,-2.50) -- (7.00,-2.50) node[right]{};
\draw[->,rgb color={255,107,0}] (-12.0,-2.50) -- (7.00,-2.50) node[right]{};
\draw[->,rgb color={255,107,0}] (-10.5,-2.50) -- (7.00,-2.50) node[right]{};
\draw[->,rgb color={255,107,0}] (-7.50,-2.50) -- (7.00,-2.50) node[right]{};
\draw[->,rgb color={255,107,0}] (-12.0,-2.50) -- (7.00,-2.50) node[right]{};
\draw[->,rgb color={255,107,0}] (-12.0,-2.50) -- (7.00,-1.44);
\draw[->,rgb color={255,77,0}] (-12.0,3.50) -- (-12.0,7.00);
\draw[->,rgb color={255,77,0}] (-12.0,3.50) -- (7.00,3.50) node[right]{};
\draw[->,rgb color={255,77,0}] (-12.0,3.50) -- (7.00,3.50) node[right]{};
\draw[->,rgb color={255,77,0}] (-10.5,3.50) -- (7.00,3.50) node[right]{};
\draw[->,rgb color={255,77,0}] (-7.50,3.50) -- (7.00,3.50) node[right]{};
\draw[->,rgb color={255,77,0}] (-12.0,3.50) -- (7.00,3.50) node[right]{};
\draw[->,rgb color={255,77,0}] (-12.0,3.50) -- (7.00,2.44);
\draw[->,rgb color={255,107,0}] (-10.0,-2.33) -- (7.00,-2.33) node[right]{};
\draw[->,rgb color={255,77,0}] (-10.0,3.33) -- (7.00,3.33) node[right]{};
\draw[->,rgb color={255,77,0}] (-10.0,3.33) -- (7.00,3.33) node[right]{};
\draw[->,rgb color={255,107,0}] (-6.75,-2.25) -- (7.00,-2.25) node[right]{};
\draw[->,rgb color={255,107,0}] (-9.00,-2.25) -- (7.00,-2.25) node[right]{};
\draw[->,rgb color={255,77,0}] (-9.00,-2.00) -- (-27.0,-6.00);
\draw[->,rgb color={255,77,0}] (-9.00,-2.00) -- (-21.0,-6.00);
\draw[->,rgb color={255,77,0}] (-9.00,-2.00) -- (-21.0,-6.00);
\draw[->,rgb color={255,77,0}] (-9.00,-2.00) -- (-21.0,-6.00);
\draw[->,rgb color={255,77,0}] (-9.00,-2.00) -- (-9.00,-6.00);
\draw[->,rgb color={255,77,0}] (-9.00,-2.00) -- (7.00,-2.00) node[right]{};
\draw[->,rgb color={255,77,0}] (-9.00,-2.00) -- (7.00,-2.00) node[right]{};
\draw[->,rgb color={255,77,0}] (-9.00,-2.00) -- (7.00,-2.00) node[right]{};
\draw[->,rgb color={255,77,0}] (-9.00,-2.00) -- (7.00,-2.00) node[right]{};
\draw[->,rgb color={255,77,0}] (-9.00,-2.00) -- (7.00,-2.00) node[right]{};
\draw[->,rgb color={255,77,0}] (-9.00,-2.00) -- (7.00,-2.00) node[right]{};
\draw[->,rgb color={255,77,0}] (-6.00,-2.00) -- (7.00,-2.00) node[right]{};
\draw[->,rgb color={255,77,0}] (-6.00,-2.00) -- (7.00,-2.00) node[right]{};
\draw[->,rgb color={255,77,0}] (-9.00,-2.00) -- (7.00,-2.00) node[right]{};
\draw[->,rgb color={255,77,0}] (-9.00,-2.00) -- (3.00,-6.00);
\draw[->,rgb color={255,77,0}] (-9.00,-2.00) -- (7.00,-0.667);
\draw[->,rgb color={255,77,0}] (-9.00,-2.00) -- (7.00,-0.667);
\draw[->,rgb color={255,77,0}] (-9.00,-2.00) -- (7.00,-0.667);
\draw[->,rgb color={255,77,0}] (-9.00,-2.00) -- (7.00,-0.933);
\draw[->,rgb color={255,77,0}] (-9.00,-2.00) -- (7.00,-1.11);
\draw[->,rgb color={255,77,0}] (-9.00,-2.00) -- (7.00,-0.476);
\draw[->,rgb color={255,77,0}] (-9.00,3.00) -- (-27.0,7.00);
\draw[->,rgb color={255,77,0}] (-9.00,3.00) -- (-21.0,7.00);
\draw[->,rgb color={255,77,0}] (-9.00,3.00) -- (-21.0,7.00);
\draw[->,rgb color={255,77,0}] (-9.00,3.00) -- (-21.0,7.00);
\draw[->,rgb color={255,77,0}] (-9.00,3.00) -- (-9.00,7.00);
\draw[->,rgb color={255,77,0}] (-9.00,3.00) -- (7.00,3.00) node[right]{};
\draw[->,rgb color={255,77,0}] (-9.00,3.00) -- (7.00,3.00) node[right]{};
\draw[->,rgb color={255,77,0}] (-9.00,3.00) -- (7.00,3.00) node[right]{};
\draw[->,rgb color={255,77,0}] (-9.00,3.00) -- (7.00,3.00) node[right]{};
\draw[->,rgb color={255,77,0}] (-9.00,3.00) -- (7.00,3.00) node[right]{};
\draw[->,rgb color={255,77,0}] (-9.00,3.00) -- (7.00,3.00) node[right]{};
\draw[->,rgb color={255,77,0}] (-6.00,3.00) -- (7.00,3.00) node[right]{};
\draw[->,rgb color={255,77,0}] (-6.00,3.00) -- (7.00,3.00) node[right]{};
\draw[->,rgb color={255,77,0}] (-9.00,3.00) -- (7.00,3.00) node[right]{};
\draw[->,rgb color={255,77,0}] (-9.00,3.00) -- (3.00,7.00);
\draw[->,rgb color={255,77,0}] (-9.00,3.00) -- (7.00,1.67);
\draw[->,rgb color={255,77,0}] (-9.00,3.00) -- (7.00,1.67);
\draw[->,rgb color={255,77,0}] (-9.00,3.00) -- (7.00,1.67);
\draw[->,rgb color={255,77,0}] (-9.00,3.00) -- (7.00,1.93);
\draw[->,rgb color={255,77,0}] (-9.00,3.00) -- (7.00,2.11);
\draw[->,rgb color={255,77,0}] (-9.00,3.00) -- (7.00,1.48);
\draw[->,rgb color={255,77,0}] (-6.75,3.25) -- (7.00,3.25) node[right]{};
\draw[->,rgb color={255,77,0}] (-9.00,3.25) -- (7.00,3.25) node[right]{};
\draw[->,rgb color={255,77,0}] (-6.00,-2.00) -- (-6.00,-6.00);
\draw[->,rgb color={255,77,0}] (-6.00,-2.00) -- (7.00,-1.13);
\draw[->,rgb color={255,0,0}] (-6.00,-1.50) -- (-33.0,-6.00);
\draw[->,rgb color={255,0,0}] (-6.00,-1.50) -- (7.00,0.667);
\draw[->,rgb color={255,0,0}] (-6.00,2.50) -- (-33.0,7.00);
\draw[->,rgb color={255,0,0}] (-6.00,2.50) -- (7.00,0.333);
\draw[->,rgb color={255,77,0}] (-6.00,3.00) -- (-6.00,7.00);
\draw[->,rgb color={255,77,0}] (-6.00,3.00) -- (7.00,2.13);
\draw[->,rgb color={255,77,0}] (-3.00,-1.75) -- (7.00,-1.75) node[right]{};
\draw[->,rgb color={255,77,0}] (-5.25,-1.75) -- (7.00,-1.75) node[right]{};
\draw[->,rgb color={255,42,0}] (-3.00,2.75) -- (7.00,2.75) node[right]{};
\draw[->,rgb color={255,42,0}] (-5.25,2.75) -- (7.00,2.75) node[right]{};
\draw[->,rgb color={255,77,0}] (-5.00,-1.67) -- (7.00,-1.67) node[right]{};
\draw[->,rgb color={255,42,0}] (-5.00,2.67) -- (7.00,2.67) node[right]{};
\draw[->,rgb color={255,77,0}] (-4.50,-1.50) -- (7.00,-1.50) node[right]{};
\draw[->,rgb color={255,77,0}] (-4.50,-1.50) -- (7.00,-1.50) node[right]{};
\draw[->,rgb color={255,77,0}] (-3.00,-1.50) -- (7.00,-1.50) node[right]{};
\draw[->,rgb color={255,77,0}] (0.000,-1.50) -- (7.00,-1.50) node[right]{};
\draw[->,rgb color={255,77,0}] (-4.50,-1.50) -- (7.00,-1.50) node[right]{};
\draw[->,rgb color={255,77,0}] (-4.50,-1.50) -- (7.00,-5.33);
\draw[->,rgb color={255,77,0}] (-4.50,-1.50) -- (7.00,-0.733);
\draw[->,rgb color={255,42,0}] (-4.50,2.50) -- (7.00,2.50) node[right]{};
\draw[->,rgb color={255,42,0}] (-4.50,2.50) -- (7.00,2.50) node[right]{};
\draw[->,rgb color={255,42,0}] (-3.00,2.50) -- (7.00,2.50) node[right]{};
\draw[->,rgb color={255,42,0}] (0.000,2.50) -- (7.00,2.50) node[right]{};
\draw[->,rgb color={255,42,0}] (-4.50,2.50) -- (7.00,2.50) node[right]{};
\draw[->,rgb color={255,42,0}] (-4.50,2.50) -- (7.00,6.33);
\draw[->,rgb color={255,42,0}] (-4.50,2.50) -- (7.00,1.73);
\draw[->,rgb color={255,77,0}] (-3.00,-1.33) -- (7.00,-1.33) node[right]{};
\draw[->,rgb color={255,42,0}] (-3.00,-1.00) -- (-10.5,-6.00);
\draw[->,rgb color={255,42,0}] (-3.00,-1.00) -- (-3.00,-6.00);
\draw[->,rgb color={255,42,0}] (-3.00,-1.00) -- (-3.00,-6.00);
\draw[->,rgb color={255,42,0}] (-3.00,-1.00) -- (-3.00,-6.00);
\draw[->,rgb color={255,42,0}] (-3.00,-1.00) -- (7.00,-1.00) node[right]{};
\draw[->,rgb color={255,42,0}] (-3.00,-1.00) -- (7.00,-1.00) node[right]{};
\draw[->,rgb color={255,42,0}] (-3.00,-1.00) -- (7.00,-1.00) node[right]{};
\draw[->,rgb color={255,42,0}] (-3.00,-1.00) -- (7.00,-1.00) node[right]{};
\draw[->,rgb color={255,42,0}] (-3.00,-1.00) -- (7.00,-1.00) node[right]{};
\draw[->,rgb color={255,42,0}] (-3.00,-1.00) -- (7.00,-1.00) node[right]{};
\draw[->,rgb color={255,42,0}] (0.000,-1.00) -- (7.00,-1.00) node[right]{};
\draw[->,rgb color={255,42,0}] (0.000,-1.00) -- (7.00,-1.00) node[right]{};
\draw[->,rgb color={255,42,0}] (-3.00,-1.00) -- (7.00,-1.00) node[right]{};
\draw[->,rgb color={255,42,0}] (-3.00,-1.00) -- (7.00,-4.33);
\draw[->,rgb color={255,42,0}] (-3.00,-1.00) -- (7.00,-2.67);
\draw[->,rgb color={255,42,0}] (-3.00,-1.00) -- (7.00,0.111);
\draw[->,rgb color={255,42,0}] (-3.00,-1.00) -- (7.00,0.111);
\draw[->,rgb color={255,42,0}] (-3.00,-1.00) -- (7.00,0.111);
\draw[->,rgb color={255,42,0}] (-3.00,-1.00) -- (7.00,-0.167);
\draw[->,rgb color={255,42,0}] (-3.00,-1.00) -- (7.00,-0.333);
\draw[->,rgb color={255,42,0}] (-3.00,-1.00) -- (7.00,0.333);
\draw[->,rgb color={255,42,0}] (-3.00,2.00) -- (-10.5,7.00);
\draw[->,rgb color={255,42,0}] (-3.00,2.00) -- (-3.00,7.00);
\draw[->,rgb color={255,42,0}] (-3.00,2.00) -- (-3.00,7.00);
\draw[->,rgb color={255,42,0}] (-3.00,2.00) -- (-3.00,7.00);
\draw[->,rgb color={255,42,0}] (-3.00,2.00) -- (7.00,2.00) node[right]{};
\draw[->,rgb color={255,42,0}] (-3.00,2.00) -- (7.00,2.00) node[right]{};
\draw[->,rgb color={255,42,0}] (-3.00,2.00) -- (7.00,2.00) node[right]{};
\draw[->,rgb color={255,42,0}] (-3.00,2.00) -- (7.00,2.00) node[right]{};
\draw[->,rgb color={255,42,0}] (-3.00,2.00) -- (7.00,2.00) node[right]{};
\draw[->,rgb color={255,42,0}] (-3.00,2.00) -- (7.00,2.00) node[right]{};
\draw[->,rgb color={255,42,0}] (0.000,2.00) -- (7.00,2.00) node[right]{};
\draw[->,rgb color={255,42,0}] (0.000,2.00) -- (7.00,2.00) node[right]{};
\draw[->,rgb color={255,42,0}] (-3.00,2.00) -- (7.00,2.00) node[right]{};
\draw[->,rgb color={255,42,0}] (-3.00,2.00) -- (7.00,5.33);
\draw[->,rgb color={255,42,0}] (-3.00,2.00) -- (7.00,3.67);
\draw[->,rgb color={255,42,0}] (-3.00,2.00) -- (7.00,0.889);
\draw[->,rgb color={255,42,0}] (-3.00,2.00) -- (7.00,0.889);
\draw[->,rgb color={255,42,0}] (-3.00,2.00) -- (7.00,0.889);
\draw[->,rgb color={255,42,0}] (-3.00,2.00) -- (7.00,1.17);
\draw[->,rgb color={255,42,0}] (-3.00,2.00) -- (7.00,0.667);
\draw[->,rgb color={255,42,0}] (-3.00,2.00) -- (7.00,1.33);
\draw[->,rgb color={255,42,0}] (-3.00,2.33) -- (7.00,2.33) node[right]{};
\draw[->,rgb color={255,42,0}] (-3.00,2.33) -- (7.00,2.33) node[right]{};
\draw[->,rgb color={255,77,0}] (0.000,-1.25) -- (7.00,-1.25) node[right]{};
\draw[->,rgb color={255,77,0}] (-2.25,-1.25) -- (7.00,-1.25) node[right]{};
\draw[->,rgb color={255,42,0}] (0.000,2.25) -- (7.00,2.25) node[right]{};
\draw[->,rgb color={255,42,0}] (-2.25,2.25) -- (7.00,2.25) node[right]{};
\draw[->,rgb color={255,0,0}] (-1.50,-0.500) -- (-18.0,-6.00);
\draw[->,rgb color={255,0,0}] (-1.50,-0.500) -- (7.00,2.33);
\draw[->,rgb color={255,0,0}] (-1.50,1.50) -- (-18.0,7.00);
\draw[->,rgb color={255,0,0}] (-1.50,1.50) -- (7.00,-1.33);
\draw[->,rgb color={255,42,0}] (0.000,-1.00) -- (7.00,-3.33);
\draw[->,rgb color={255,42,0}] (0.000,-1.00) -- (7.00,-0.417);
\draw[->,rgb color={255,42,0}] (2.25,-0.750) -- (7.00,-0.750) node[right]{};
\draw[->,rgb color={255,42,0}] (0.000,-0.750) -- (7.00,-0.750) node[right]{};
\draw[->,rgb color={255,42,0}] (0.000,-0.667) -- (7.00,-0.667) node[right]{};
\draw[->,rgb color={255,42,0}] (0.000,-0.500) -- (7.00,-0.500) node[right]{};
\draw[->,rgb color={255,42,0}] (0.000,-0.500) -- (7.00,-0.500) node[right]{};
\draw[->,rgb color={255,42,0}] (1.50,-0.500) -- (7.00,-0.500) node[right]{};
\draw[->,rgb color={255,42,0}] (4.50,-0.500) -- (7.00,-0.500) node[right]{};
\draw[->,rgb color={255,42,0}] (0.000,-0.500) -- (7.00,-0.500) node[right]{};
\draw[->,rgb color={255,42,0}] (0.000,-0.500) -- (7.00,-1.67);
\draw[->,rgb color={255,42,0}] (0.000,-0.500) -- (7.00,0.0833);
\draw[->,rgb color={255,0,0}] (0.000,0.000) -- (7.00,0.000) node[right]{};
\draw[->,rgb color={255,0,0}] (0.000,0.000) -- (7.00,0.000) node[right]{};
\draw[->,rgb color={255,0,0}] (0.000,0.000) -- (7.00,0.000) node[right]{};
\draw[->,rgb color={255,0,0}] (0.000,0.000) -- (7.00,0.000) node[right]{};
\draw[->,rgb color={255,0,0}] (0.000,0.000) -- (7.00,0.000) node[right]{};
\draw[->,rgb color={255,0,0}] (0.000,0.000) -- (7.00,0.000) node[right]{};
\draw[->,rgb color={255,0,0}] (3.00,0.000) -- (7.00,0.000) node[right]{};
\draw[->,rgb color={255,0,0}] (3.00,0.000) -- (7.00,0.000) node[right]{};
\draw[->,rgb color={255,0,0}] (0.000,0.000) -- (7.00,0.000) node[right]{};
\draw[->,rgb color={255,0,0}] (0.000,0.000) -- (7.00,-4.67);
\draw[->,rgb color={255,0,0}] (0.000,0.000) -- (7.00,-2.33);
\draw[->,rgb color={255,0,0}] (0.000,0.000) -- (7.00,-2.33);
\draw[->,rgb color={255,0,0}] (0.000,0.000) -- (7.00,-2.33);
\draw[->,rgb color={255,0,0}] (0.000,0.000) -- (7.00,-1.17);
\draw[->,rgb color={255,0,0}] (0.000,0.000) -- (7.00,1.17);
\draw[->,rgb color={255,0,0}] (0.000,0.000) -- (7.00,1.17);
\draw[->,rgb color={255,0,0}] (0.000,0.000) -- (7.00,1.17);
\draw[->,rgb color={255,0,0}] (0.000,0.000) -- (7.00,-0.778);
\draw[->,rgb color={255,0,0}] (0.000,0.000) -- (7.00,0.778);
\draw[->,rgb color={255,0,0}] (0.000,0.000) -- (7.00,1.56);
\draw[->,rgb color={255,0,0}] (0.000,0.000) -- (7.00,0.583);
\draw[->,rgb color={255,0,0}] (0.000,0.500) -- (0.000,-6.00);
\draw[->,rgb color={255,0,0}] (0.000,0.500) -- (0.000,7.00);
\draw[->,rgb color={255,0,0}] (0.000,1.00) -- (7.00,1.00) node[right]{};
\draw[->,rgb color={255,0,0}] (0.000,1.00) -- (7.00,1.00) node[right]{};
\draw[->,rgb color={255,0,0}] (0.000,1.00) -- (7.00,1.00) node[right]{};
\draw[->,rgb color={255,0,0}] (0.000,1.00) -- (7.00,1.00) node[right]{};
\draw[->,rgb color={255,0,0}] (0.000,1.00) -- (7.00,1.00) node[right]{};
\draw[->,rgb color={255,0,0}] (0.000,1.00) -- (7.00,1.00) node[right]{};
\draw[->,rgb color={255,0,0}] (3.00,1.00) -- (7.00,1.00) node[right]{};
\draw[->,rgb color={255,0,0}] (3.00,1.00) -- (7.00,1.00) node[right]{};
\draw[->,rgb color={255,0,0}] (0.000,1.00) -- (7.00,1.00) node[right]{};
\draw[->,rgb color={255,0,0}] (0.000,1.00) -- (7.00,3.33);
\draw[->,rgb color={255,0,0}] (0.000,1.00) -- (7.00,3.33);
\draw[->,rgb color={255,0,0}] (0.000,1.00) -- (7.00,3.33);
\draw[->,rgb color={255,0,0}] (0.000,1.00) -- (7.00,5.67);
\draw[->,rgb color={255,0,0}] (0.000,1.00) -- (7.00,-0.167);
\draw[->,rgb color={255,0,0}] (0.000,1.00) -- (7.00,-0.167);
\draw[->,rgb color={255,0,0}] (0.000,1.00) -- (7.00,-0.167);
\draw[->,rgb color={255,0,0}] (0.000,1.00) -- (7.00,2.17);
\draw[->,rgb color={255,0,0}] (0.000,1.00) -- (7.00,-0.556);
\draw[->,rgb color={255,0,0}] (0.000,1.00) -- (7.00,0.222);
\draw[->,rgb color={255,0,0}] (0.000,1.00) -- (7.00,1.78);
\draw[->,rgb color={255,0,0}] (0.000,1.00) -- (7.00,0.417);
\draw[->,rgb color={255,0,0}] (0.000,1.50) -- (7.00,1.50) node[right]{};
\draw[->,rgb color={255,0,0}] (0.000,1.50) -- (7.00,1.50) node[right]{};
\draw[->,rgb color={255,0,0}] (1.50,1.50) -- (7.00,1.50) node[right]{};
\draw[->,rgb color={255,0,0}] (4.50,1.50) -- (7.00,1.50) node[right]{};
\draw[->,rgb color={255,0,0}] (0.000,1.50) -- (7.00,1.50) node[right]{};
\draw[->,rgb color={255,0,0}] (0.000,1.50) -- (7.00,2.67);
\draw[->,rgb color={255,0,0}] (0.000,1.50) -- (7.00,0.917);
\draw[->,rgb color={255,0,0}] (0.000,1.67) -- (7.00,1.67) node[right]{};
\draw[->,rgb color={255,0,0}] (2.25,1.75) -- (7.00,1.75) node[right]{};
\draw[->,rgb color={255,0,0}] (0.000,1.75) -- (7.00,1.75) node[right]{};
\draw[->,rgb color={255,42,0}] (0.000,2.00) -- (7.00,4.33);
\draw[->,rgb color={255,42,0}] (0.000,2.00) -- (7.00,1.42);
\draw[->,rgb color={255,42,0}] (1.00,-0.333) -- (7.00,-0.333) node[right]{};
\draw[->,rgb color={255,0,0}] (1.00,1.33) -- (7.00,1.33) node[right]{};
\draw[->,rgb color={255,0,0}] (1.00,1.33) -- (7.00,1.33) node[right]{};
\draw[->,rgb color={255,42,0}] (3.75,-0.250) -- (7.00,-0.250) node[right]{};
\draw[->,rgb color={255,42,0}] (1.50,-0.250) -- (7.00,-0.250) node[right]{};
\draw[->,rgb color={255,0,0}] (1.50,0.500) -- (7.00,0.500) node[right]{};
\draw[->,rgb color={255,0,0}] (1.50,0.500) -- (7.00,0.500) node[right]{};
\draw[->,rgb color={255,0,0}] (3.00,0.500) -- (7.00,0.500) node[right]{};
\draw[->,rgb color={255,0,0}] (6.00,0.500) -- (7.00,0.500) node[right]{};
\draw[->,rgb color={255,0,0}] (1.50,0.500) -- (7.00,0.500) node[right]{};
\draw[->,rgb color={255,0,0}] (1.50,0.500) -- (7.00,-0.111);
\draw[->,rgb color={255,0,0}] (1.50,0.500) -- (7.00,1.11);
\draw[->,rgb color={255,0,0}] (3.75,1.25) -- (7.00,1.25) node[right]{};
\draw[->,rgb color={255,0,0}] (1.50,1.25) -- (7.00,1.25) node[right]{};
\draw[->,rgb color={255,0,0}] (2.00,0.333) -- (7.00,0.333) node[right]{};
\draw[->,rgb color={255,0,0}] (2.00,0.333) -- (7.00,0.333) node[right]{};
\draw[->,rgb color={255,0,0}] (2.00,0.667) -- (7.00,0.667) node[right]{};
\draw[->,rgb color={255,0,0}] (4.50,0.250) -- (7.00,0.250) node[right]{};
\draw[->,rgb color={255,0,0}] (2.25,0.250) -- (7.00,0.250) node[right]{};
\draw[->,rgb color={255,0,0}] (4.50,0.750) -- (7.00,0.750) node[right]{};
\draw[->,rgb color={255,0,0}] (2.25,0.750) -- (7.00,0.750) node[right]{};
\draw[->,rgb color={255,0,0}] (3.00,0.000) -- (7.00,-0.667);
\draw[->,rgb color={255,0,0}] (3.00,0.000) -- (7.00,0.444);
\draw[->,rgb color={255,0,0}] (3.00,1.00) -- (7.00,1.67);
\draw[->,rgb color={255,0,0}] (3.00,1.00) -- (7.00,0.556);
\end{tikzpicture}
}
}
\caption{First steps of the scattering diagram $S(\fD^\iin_{\cl^+})$. Figure due to Tim Gr\"afnitz \cite{gabele2019tropical}.}
\end{figure}

\newpage

\section{Introduction}

\subsection{Genus-$0$ Gromov--Witten theory of $(\PP^2,E)$}
\label{section_relative_GW}

Let $E$ be a smooth cubic curve in the complex projective plane 
$\PP^2$. For every $d \in \Z_{>0}$, we consider 
degree-$d$ rational curves in $\PP^2$ intersecting 
$E$ at a single point. The space of degree-$d$ rational curves in $\PP^2$ is of dimension $3d-1$. 
On the other hand, a generic degree-$d$ curve intersects $E$ in $3d$ points, and so a single intersection point with $E$ should define a constraint of codimension $3d-1$. 
Therefore, we expect that counting degree-$d$ rational curves in $\PP^2$ intersecting $E$ at a single point should be a well-posed enumerative problem. 

We formulate a precise version of this enumerative problem using Gromov--Witten theory. Let 
$\overline{M}_{0}(\PP^2/E,d)$ be the moduli space of 
genus-$0$ degree-$d$ stable maps to $\PP^2$ relative to $E$,
with maximal contact order with $E$ at a single point.
According to general relative Gromov--Witten theory
\cite{MR1882667, MR1938113}, the moduli space $\overline{M}_{0}(\PP^2/E,d)$ is a proper Deligne--Mumford stack, admitting a zero-dimensional
virtual fundamental class $[\overline{M}_{0}(\PP^2/E,d)]^{\virt}$.
The corresponding genus-$0$ maximal contact Gromov--Witten invariant of $(\PP^2,E)$ is defined by
\[N_{0,d}^{\PP^2/E} \coloneq \int_{[\overline{M}_{0}(\PP^2/E,d)]^{\virt}} 1 \in \Q \,.\]

The main advantage of the Gromov--Witten definition over a more naive enumerative definition is that $N_{0,d}^{\PP^2/E}$ is independent of the specific choice of $E$. This follows from the general deformation invariance property of Gromov--Witten theory. The main drawback of the Gromov--Witten definition is that the moduli space $\overline{M}_{0}(\PP^2/E,d)$ is in general of positive dimension and contains stable maps which are very far from being immersions. 
In particular, the invariants 
$N_{0,d}^{\PP^2/E}$ are in general non-integers and their direct geometric meaning is unclear. Nevertheless, a general theme in Gromov-Witten theory is that it is often possible to reorganize Gromov-Witten invariants to form so-called BPS counts which are integers, sometimes more geometrically meaningful, and often have better properties.

Let $K_{\PP^2}$ be the noncompact Calabi-Yau 3-fold defined as 
the total space of the canonical line bundle $\cO(-3)$ of $\PP^2$, also known as local $\PP^2$.
For every $d \in \Z_{>0}$, let $N_{0,d}^{K_{\PP^2}}$ be the genus-$0$ degree-$d$ Gromov-Witten invariant of 
$K_{\PP^2}$. According to the local-relative correspondence of van Garrel-Graber-Ruddat \cite{MR3948687}, we have, for every $d \in \Z_{>0}$, the relation
\[ N_{0,d}^{\PP^2/E}=(-1)^{d-1}3d N_{0,d}^{K_{\PP^2}}\]
between maximal contact Gromov-Witten invariants of $(\PP^2,E)$ and Gromov-Witten invariants of 
$K_{\PP^2}$.
The invariants $N_{0,d}^{K_{\PP^2}}$
can be computed by torus localization and the answer can be expressed in the framework of local mirror symmetry \cite{MR1797015}. 
Therefore, the relative Gromov-Witten invariants $N_{0,d}^{\PP^2/E}$ can be effectively computed.

We define relative BPS counts $n_{0,d}^{\PP^2/E}$ by the formula
\begin{equation} 
(-1)^{d-1} N_{0,d}^{\PP^2/E} 
=\sum_{\ell|d} \frac{1}{\ell^2} n_{0,\frac{d}{\ell}}^{\PP^2/E} \,,
\end{equation}
where the sum is over the positive divisors of $d$. The BPS counts underlying the 
invariants $N_{0,d}^{K_{\PP^2}}$ are the genus-$0$ Gopakumar-Vafa invariants $n_{0,d}^{K_{\PP^2}}$
defined by the formula
\begin{equation}
N_{0,d}^{K_{\PP^2}}=\sum_{\ell|d} \frac{1}{\ell^3}
n_{0,\frac{d}{\ell}}^{K_{\PP^2}}\,,
\end{equation}
where the sum is again over the positive divisors of $d$.
The local-relative correspondence can be rewritten at the BPS level as 
$n_{0,d}^{\PP^2/E}=3d n_{0,d}^{K_{\PP^2}}$. It is known
(see Proposition \ref{prop_katz_conj}) that the BPS invariants $n_{0,d}^{K_{\PP^2}}$ are integers of sign $(-1)^{d-1}$. 
It follows that the relative BPS invariants  $n_{0,d}^{\PP^2/E}$ are also integers of sign $(-1)^{d-1}$. Therefore, the BPS structure underlying the invariants $N_{0,d}^{\PP^2/E}$ can be considered as understood.

\subsection{Contributions of the various contact points}
\label{section_takahashi_intro}

\subsubsection{Statements}

The present paper concerns the BPS structure underlying a
more precise version of the invariants $N_{0,d}^{\PP^2/E}$ taking into account the position of the contact point with $E$.

We fix $p_0$
one of the $9$ flex points of $E$ and we denote by $L_{p_0}$ the line tangent to $E$ at $p_0$. If $C$ is a degree $d$ 
rational curve in 
$\PP^2$ intersecting $E$ at a single point $p$, then the cycle
$C-dL_{p_0}$ intersects $E$ in the cycle $3d p-3dp_0=3d(p-p_0)$.
As the cycle $C-dL_{p_0}$ has degree $0$, it is linearly equivalent to $0$, and so the cycle $3d (p-p_0)$ is linearly equivalent to $0$ in $E$.

Thus\footnote{One should note that in general an element of $\overline{M}_0(\PP^2/E,d)$ is a map to an expanded degeneration of $\PP^2$ along $E$, and so an additional argument is required. In genus-$0$, every component mapping inside the expansion of $\PP^2$ along $E$ projects to a point in $E$, and so the result follows indeed from the previous paragraph. A more general argument working in arbitrary genus is given in Lemma 
\ref{lem_contact_torsion}}, if $p$ is in the image of the evaluation map $\ev \colon \overline{M}_0(\PP^2/E,d) \rightarrow E$ at the contact point with $E$, then $p-p_0$ is necessarily a $(3d)$-torsion point of
the group
$\Pic^0(E)$ of degree-$0$ cycles on $E$ up to linear equivalence. Therefore, we have a decomposition 
\[\overline{M}_0(\PP^2/E,d) = \coprod_{p \in P_d} \overline{M}_0(\PP^2/E,d)^p \,,\]
where the disjoint union is over the set $P_d$ of $(3d)^2$ points $p$ of $E$ such that $p-p_0$ is a $(3d)$-torsion point in $\Pic^0(E)$. 
The set $P_d$ is independent of the choice of the flex point $p_0$. Indeed, if $p_0'$ is another flex point, then $p_0-p_0'$ is $3$-torsion and in particular $(3d)$-torsion in $\Pic^0(E)$.

For every $p \in P_d$, let
$[\overline{M}_{0}(\PP^2/E,d)^p]^{\virt}$ be
the restriction of 
$[\overline{M}_{0}(\PP^2/E,d)]^{\virt}$
to $\overline{M}_{0}(\PP^2/E,d)^p$.
We define the contribution of $p$ to the relative Gromov-Witten invariant $N_{0,d}^{\PP^2/E}$ by
\[N_{0,d}^{\PP^2/E,p} \coloneq \int_{[\overline{M}_{0}(\PP^2/E,d)^p]^{\virt}} 1 \in \Q \,.\]

By construction, the relative Gromov-Witten invariant 
$N_{0,d}^{\PP^2/E}$ is the sum of the contributions of the points $p \in P_d$:
\begin{equation}\label{eq_total_gw}
N_{0,d}^{\PP^2/E} = \sum_{p \in P_d} N_{0,d}^{\PP^2/E,p} \,.\end{equation}

The main question we wish to address is how the numbers
$N_{0,d}^{\PP^2/E,p}$ depend on the point $p$ in $P_d$.
For different points $p \in P_d$, the geometry of multiple cover contributions to $N_{0,d}^{\PP^2/E,p}$ can be quite different. 

For example, for $d=2$ and $p \in P_2$ with $p \notin P_1$, then $\overline{M}_{0}(\PP^2/E,d)^p$ is a single point and $N_{0,2}^{\PP^2/E,p}=1$. However, for $p \in P_2$ with $p \in P_1$, that is, if $p$ is a flex point, then
the contributions come from double covers of the tangent line to $E$ at $p$, so 
$\overline{M}_{0}(\PP^2/E,d)^p$ is positive-dimensional , and one finds after a computation of the virtual class that
$N_{0,2}^{\PP^2/E,p}=\frac{3}{4}$,
see \cite[Proposition 6.1]{MR2667135}.

We introduce some notation in order to make a systematic study of this phenomenon.
For $p \in \bigcup_{d \geqslant 1} P_d$, we denote by 
$d(p)$ the smallest positive integer $d$ such that 
$p \in P_d$. 
The points $p \in P_d$ with $d(p)=d$ are 
primitive in the sense that they do not belong to any $P_k$ with $k<d$ and so are the simplest
from the point of view of multiple covers in Gromov-Witten theory. 
By contrast, the points $p \in P_d$ with $d(p)=1$ are exactly the $9$ flex points of $E$, which contribute to Gromov-Witten invariants in every degree, and so are the most complicated from the point of view of multiple covers.

By a monodromy argument reviewed in Lemma
\ref{lem_monodromy}, we show that
$N_{0,d}^{\PP^2/E,p}$ only depends on $p \in P_d$ through $d(p)=k$. Hence, 
for every $d \in \Z_{>0}$ and 
$k \in \Z_{>0}$ dividing $d$, we write
$N_{0,d}^{\PP^2/E,k}$ for the common value  of the invariants $N_{0,d}^{\PP^2/E,p}$ with
$p \in P_d$ such that $d(p)=k$. 

Therefore, the question on the dependence of the point of contact is reduced to the question of the dependence on $k$ of the invariants $N_{0,d}^{\PP^2/E, k}$. 
This question is 
completely solved by the following result, expressing the general invariants 
$N_{0,d}^{\PP^2/E,k}$ in terms of the primitive ones 
$N_{0,d'}^{\PP^2/E,d'}$.

\begin{thm} \label{thm_takahashi_precise}
For every $d \in \Z_{>0}$ and $k
\in \Z_{>0}$ dividing $d$, we have 
\begin{equation}(-1)^{d-1} N_{0,d}^{\PP^2/E,k}=\sum_{\substack{d'\in \Z_{>0} \\ k|d'|d}} \frac{1}{(d/d')^2}(-1)^{d'-1}
N_{0,d'}^{\PP^2/E,d'} \,.\end{equation}
\end{thm}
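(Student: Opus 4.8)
The plan is to recast the identity as a multiple-cover formula for contact-point-refined BPS invariants and to isolate its one genuinely geometric input. For $p \in P_d$, define refined invariants $n_{0,d}^{\PP^2/E,p}$ by the relation
\[
(-1)^{d-1} N_{0,d}^{\PP^2/E,p} = \sum_{\substack{\ell \geqslant 1,\ \ell \mid d \\ p \in P_{d/\ell}}} \frac{1}{\ell^2}\, n_{0,d/\ell}^{\PP^2/E,p}\,.
\]
Since $p \in P_{d/\ell}$ is equivalent to $d(p)\mid(d/\ell)$, for fixed $p$ this is a triangular system over the multiples of $d(p)$ dividing $d$, uniquely solvable for the $n_{0,\bullet}^{\PP^2/E,p}$ by Möbius inversion on the divisibility poset. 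By the monodromy invariance of Lemma~\ref{lem_monodromy}, $N_{0,d}^{\PP^2/E,p}$ depends on $p$ only through $k=d(p)$, hence so does $n_{0,d}^{\PP^2/E,p}$; write $n_{0,d}^{\PP^2/E,k}$ for the common value. Setting $d'=d/\ell$, the defining relation reads
\[
(-1)^{d-1} N_{0,d}^{\PP^2/E,k} = \sum_{k \mid d' \mid d} \frac{1}{(d/d')^2}\, n_{0,d'}^{\PP^2/E,k}\,.
\]
Because the map sending a sequence to its divisor sums weighted by $(d/d')^{-2}$ is triangular with unit diagonal, comparison with Theorem~\ref{thm_takahashi_precise} shows, by uniqueness of the inversion, that the theorem is equivalent to the single assertion $n_{0,d}^{\PP^2/E,k} = n_{0,d}^{\PP^2/E,d}$ for all $k \mid d$ (the case $d'=d$ of the definition already giving $n_{0,d}^{\PP^2/E,d}=(-1)^{d-1}N_{0,d}^{\PP^2/E,d}$).

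It remains to prove that the refined BPS invariant in degree $d$ is independent of the primitivity level $k=d(p)$ of the contact point. Geometrically, the inversion has stripped from $N_{0,d}^{\PP^2/E,k}$ exactly the contributions of multiple covers of the maximal-contact BPS curves of intermediate degrees $d'$ with $k \mid d' \mid d$ through $p$, the factor $1/\ell^2$ being the genus-$0$ local multiple-cover factor in the relative normalization (equivalently, the factor produced by the $(-1)^{d-1}3d$ of the local-relative correspondence of van Garrel--Graber--Ruddat). The first route I would attempt is a direct stratification of $\overline{M}_0(\PP^2/E,d)^p$ by combinatorial type of the stable map, isolating on each stratum an underlying primitive map of degree $d'$ together with a degree-$\ell$ cover, and then evaluating each multiple-cover locus by the degeneration formula combined with the local multiple-cover computation.

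The main obstacle is that this independence is not forced by monodromy or naive deformation invariance: in the universal family over the modular curve the points of a fixed level $k$ lie in $\mathrm{SL}_2(\Z/3d\Z)$-monodromy orbits disjoint from those of points of any other level, so Lemma~\ref{lem_monodromy} can never relate them, and the contracted components and obstruction bundle over the multiple-cover loci make the direct virtual evaluation delicate. I would therefore establish the independence by an honest computation of the refined invariants: the tropical/scattering computation developed in the body of the paper expresses each $N_{0,d}^{\PP^2/E,p}$ combinatorially, and its translation into wall-crossing in $\DC(\PP^2)$ identifies the refined BPS invariants with Euler characteristics of moduli of one-dimensional Gieseker-semistable sheaves on $\PP^2$. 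The desired statement then reduces to the fact that these sheaf-theoretic invariants depend only on the total degree $d$, and not on the finer torsion type of the contact point, which can be read off from the wall structure.
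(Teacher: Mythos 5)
Your proposal follows essentially the same route as the paper: the M\"obius-inversion reformulation of the statement as $k$-independence of the BPS invariants $n_{0,d}^{\PP^2/E,k}$ is exactly the paper's Theorem \ref{thm_takahashi_bps}, and your chosen proof via the tropical/scattering correspondence, its identification with wall-crossing for one-dimensional Gieseker semistable sheaves on $\PP^2$, and the known $\chi$-independence of their intersection Euler characteristics is precisely the paper's argument. The only point you elide is that the scattering correspondence yields a triangular linear relation mixing the various $k$ and $\chi$ (Theorem \ref{thm_log_bps_local_bps_intro}) rather than a direct identification, so extracting $\Omega_{d,k}^{\PP^2/E}=\frac{1}{3d}\Omega_d^{\PP^2}$ still requires the short induction on $\gcd(d,\chi)$ and the counting identity $\sum_{k}s_{k,\ell}/r_{\ell}=3d$ carried out in \S\ref{section_thm_local_relative_intro}.
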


According to point (1) of 
\cite[Proposition 4.21]{choi2018log}, the primitive invariants $N_{0,d}^{\PP^2/E,d}$
are positive integers. Indeed, the corresponding moduli space is the union of finitely many possibly nonreduced points, 
each contributing its length to the Gromov-Witten count. Thus, Theorem \ref{thm_takahashi_precise} also expresses the BPS structure underlying the Gromov-Witten invariants $N_{0,d}^{\PP^2/E,k}$.

In order to get rid of signs, it is useful to define, for every 
$d \in \Z_{>0}$ and $k \in \Z_{>0}$ dividing $d$:
\[\overline{\Omega}_{d, k}^{\PP^2/E} \coloneq (-1)^{d-1}
N_{0,d}^{\PP^2/E, k} \in \Q\,.\]
There exists a unique collection of $\Omega^{\PP^2/E}_{d,k} \in \Q$, indexed by $d \in \Z_{>0}$ and $k \in \Z_{>0}$ dividing $d$, such that
\begin{equation}\label{equation_bps}
\overline{\Omega}_{d, k}^{\PP^2/E}=\sum_{\substack{d' \in \Z_{\geqslant 1}\\ k|d'|d}} \frac{1}{(d/d')^2} \Omega^{\PP^2/E}_{d', k}\,.\end{equation}
Indeed, this relation can be inverted by the M\"obius inversion formula. We call $\Omega_{d,k}^{\PP^2/E}$
the degree-$d$ relative BPS invariant of 
$(\PP^2,E)$ attached to the point $p$
with $d(p)=k$.
The following result
rephrases Theorem \ref{thm_takahashi_precise}
in terms of the relative BPS invariants 
$\Omega_{d,k}^{\PP^2/E}$.

\begin{thm}\label{thm_takahashi_bps}
For every $d \in \Z_{>0}$, the relative 
BPS invariant $\Omega_{d,k}^{\PP^2/E}$ is independent of $k$, 
that is, we have
\[ \Omega_{d,k}^{\PP^2/E}=\Omega_{d,k'}^{\PP^2/E}\]
for every $k, k' \in \Z_{>0}$ dividing $d$.
\end{thm}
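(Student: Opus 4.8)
The plan is to deduce Theorem \ref{thm_takahashi_bps} from Theorem \ref{thm_takahashi_precise} by a purely formal Möbius inversion argument, with no further geometric input: all the substantive content has already been packaged into Theorem \ref{thm_takahashi_precise}. First I would rewrite the statement of Theorem \ref{thm_takahashi_precise} in terms of the sign-normalized invariants $\overline{\Omega}_{d,k}^{\PP^2/E} = (-1)^{d-1} N_{0,d}^{\PP^2/E,k}$. Since $(-1)^{d'-1} N_{0,d'}^{\PP^2/E,d'} = \overline{\Omega}_{d',d'}^{\PP^2/E}$, Theorem \ref{thm_takahashi_precise} becomes
\[ \overline{\Omega}_{d,k}^{\PP^2/E} = \sum_{\substack{d' \in \Z_{>0} \\ k \mid d' \mid d}} \frac{1}{(d/d')^2}\, \overline{\Omega}_{d',d'}^{\PP^2/E}. \]

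Next I would isolate the base case: setting $k=d$ in the defining relation \eqref{equation_bps} leaves only the term $d'=d$, so $\overline{\Omega}_{d,d}^{\PP^2/E} = \Omega_{d,d}^{\PP^2/E}$; write $\omega_d$ for this common value. The reformulated Theorem \ref{thm_takahashi_precise} then reads $\overline{\Omega}_{d,k}^{\PP^2/E} = \sum_{k \mid d' \mid d} (d/d')^{-2}\,\omega_{d'}$, whereas the definition \eqref{equation_bps} of the BPS invariants reads $\overline{\Omega}_{d,k}^{\PP^2/E} = \sum_{k \mid d' \mid d} (d/d')^{-2}\,\Omega_{d',k}^{\PP^2/E}$. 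The crucial observation is that these two identities present the very same numbers $\overline{\Omega}_{d,k}^{\PP^2/E}$ as the image of the two collections $(\omega_{d'})$ and $(\Omega_{d',k}^{\PP^2/E})$ under one and the same transformation over the divisibility poset.

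To conclude, I would fix $k$ and substitute $d = km$, $d' = km'$, so that the condition $k \mid d' \mid d$ becomes $m' \mid m$ and $d/d' = m/m'$; both relations then take the form of a Dirichlet convolution with the arithmetic function $f(n) = n^{-2}$, which is invertible because $f(1)=1 \neq 0$. Equivalently, and more directly, the collection $(\omega_{d})_{k \mid d}$ satisfies the defining system \eqref{equation_bps} of $(\Omega_{d,k}^{\PP^2/E})_{k \mid d}$, so by the uniqueness of its solution asserted after \eqref{equation_bps} the two collections coincide: $\Omega_{d,k}^{\PP^2/E} = \omega_d = \Omega_{d,d}^{\PP^2/E}$ for every $d$ with $k \mid d$. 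As $\omega_d$ does not involve $k$, this is exactly the claimed independence. The only genuine obstacle is the bookkeeping of the index sets, i.e.\ making sure the inversion is performed over the correct sub-poset of divisors divisible by $k$ so that the uniqueness statement applies verbatim; there is no analytic or geometric difficulty beyond what Theorem \ref{thm_takahashi_precise} already supplies.
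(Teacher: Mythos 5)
Your argument is correct. Taking Theorem \ref{thm_takahashi_precise} as given, the chain you describe --- $\overline{\Omega}_{d,d}^{\PP^2/E}=\Omega_{d,d}^{\PP^2/E}$ from the $k=d$ case of \eqref{equation_bps}, followed by the observation that the two collections $\bigl(\Omega_{d',d'}^{\PP^2/E}\bigr)_{k\mid d'\mid d}$ and $\bigl(\Omega_{d',k}^{\PP^2/E}\bigr)_{k\mid d'\mid d}$ solve the same unitriangular (Dirichlet-invertible) system with the same left-hand sides, hence coincide by the uniqueness asserted after \eqref{equation_bps} --- is precisely the content of the paper's remark that Theorem \ref{thm_takahashi_bps} ``rephrases'' Theorem \ref{thm_takahashi_precise}; the paper leaves this bookkeeping implicit and you have supplied it correctly. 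The one thing worth flagging is that the paper's own logical route to this statement runs in the opposite direction: there, the $k$-independence is what is actually established first, as an immediate consequence of Theorem \ref{thm_local_relative_intro} (namely $\Omega_{d,k}^{\PP^2/E}=\frac{1}{3d}\Omega_d^{\PP^2}$, whose right-hand side visibly does not involve $k$), which is itself obtained from the sheaf side via Theorems \ref{thm_joyce_conj_intro} and \ref{thm_log_bps_local_bps_intro}; Theorem \ref{thm_takahashi_precise} is then recovered as the unpacked form. Since the two statements are equivalent by exactly your formal inversion, and Theorem \ref{thm_takahashi_precise} is stated as an available result preceding Theorem \ref{thm_takahashi_bps}, your derivation is a legitimate and complete proof of the statement as posed; what it does not do, of course, is reprove any of the substantive geometric input, which in the paper lives entirely in the proof of Theorem \ref{thm_takahashi_precise} (equivalently of Theorem \ref{thm_local_relative_intro}).
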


By Theorem \ref{thm_takahashi_bps}, it makes sense to write
$\Omega_d^{\PP^2/E}$ for the common value of the invariants
$\Omega_{d,k}^{\PP^2/E}$. As we have $\Omega_d^{\PP^2/E}=(-1)^{d-1} N_{0,d}^{\PP^2/E,d}$ and we know that $N_{0,d}^{\PP^2/E,d}$ is a positive integer, it follows that $\Omega_d^{\PP^2/E}$ is an integer of sign $(-1)^{d-1}$. As the $(3d)^2$ points $p \in P_d$ all have the same BPS contribution
$\Omega_d^{\PP^2/E}$, it follows from (\ref{eq_total_gw}) that  
\begin{equation} n_{0,d}^{\PP^2/E}=(3d)^2 \Omega_d^{\PP^2/E}\,.\end{equation}

Table \ref{table_bps} gives the sequence of the relative BPS numbers
$\Omega_d^{\PP^2/E}$ for $d \leqslant 10$.

\begin{table}[H]
\centering
\begin{tabular}{l|r}
$d$ & $\Omega_d^{\PP^2/E}$ \\\hline
1 & 1 \\
2 & -1 \\
3 & 3 \\
4 & -16 \\
5 & 113 \\
6 & -948 \\
7 & 8974 \\
8 & -92840 \\
9 & 1027737 \\
10 & -12000405
\end{tabular}
\caption{\label{tab:widgets}
Invariants $\Omega_d^{\PP^2/E}$ for $d \leqslant 10$}
\label{table_bps}
\end{table}

\subsubsection{Example} The simplest non-trivial case of Theorem \ref{thm_takahashi_precise}
is obtained for
$d=2$ and $k=1$. In such case, Theorem 
\ref{thm_takahashi_precise} states that 
\[- N_{0,2}^{\PP^2/E,1}
=-N_{0,2}^{\PP^2/E,2}+\frac{1}{4}N_{0,1}^{\PP^2/E,1}\,.\]
If one knows that 
$N_{0,1}^{\PP^2/E,1}=1$
and $N_{0,2}^{\PP^2/E,2}
=1$, we obtain that one should have 
\[ N_{0,2}^{\PP^2/E,1}=1-\frac{1}{4}=\frac{3}{4}\,\,,\]
which is indeed correct by 
\cite[Proposition 6.1]{MR2667135}.

\subsubsection{Comments on Theorem \ref{thm_takahashi_precise}}

Theorem \ref{thm_takahashi_precise} is an addition to the relatively short list of questions in Gromov-Witten theory which can be fully solved despite the presence of
contracted components and multiple covers.

The study of the numbers
$N_{0,d}^{\PP^2/E,k}$ was initiated by 
N.\ Takahashi  \cite{takahashi9605007curves, MR1844627} and the content of Theorem \ref{thm_takahashi_precise} was known since then as the N.\ Takahashi's conjecture. 
In the general form of \cite[Conjecture 1.3]{choi2018log}, the pair $(\PP^2,E)$ is replaced by a pair $(Y,D)$, 
where $Y$ is a del Pezzo surface and $D$ a smooth anticanonical divisor on $Y$. One might expect to generalize our proof of 
Theorem \ref{thm_takahashi_precise} to treat that case. 
On the Gromov--Witten side, the work \cite{gabele2019tropical} already considers the general case of del Pezzo surfaces. What is missing is to generalize on the sheaf side the work \cite{bousseau2019scattering} from $(\PP^2,E)$ to del Pezzo surfaces.
An obstruction is that the space of Bridgeland stability conditions for local del Pezzo surfaces has not been studied as much as for local $\PP^2$. For example, the work of Bayer-Macri \cite{MR2852118} for local $\PP^2$ has not been generalized yet to the case of local del Pezzo surfaces.
We leave the question open for the present paper.

Theorems \ref{thm_takahashi_precise}-\ref{thm_takahashi_bps} can be viewed as an analogue for the log K3 surface $(\PP^2,E)$ of
a previously known result for K3 surfaces.
This analogy was already clear in \cite{takahashi9605007curves}.
Let $S$ be a projective K3 surface and let $\beta$ be an effective curve class on $S$. Let 
$N_{0,\beta}^S \in \Q$ be the reduced Gromov--Witten count of rational curves in $S$ of class $\beta$, invariant under deformation of $S$ keeping $\beta$ effective \cite{MR2746343}. 
Using the monodromy in the moduli space of K3 surfaces, one can show that $N_{0,\beta}$ only depends on 
$\beta^2$ and on the divisibility of 
$\beta$ in the lattice 
$H_2(S,\Z)$.
The divisibility of $\beta$ for K3 surfaces is analogous to the choice of the point $p \in P_d$ for $(\PP^2,E)$.
If the divisibility of $\beta$ is $1$, that is, 
if $\beta$ is primitive, then 
$N_{0,\beta}^S$ is a positive integer, counting rational curves with integer multiplicities, 
whereas if $\beta$ is non-primitive, $N_{0,\beta}^S$ is only a rational number, receiving complicated contributions from multiple covers.
One defines BPS invariants $n_{0,\beta}^S$
by the formula 
\[ N_{0,\beta}^S=\sum_{\beta=\ell \beta'}
\frac{1}{\ell^3}n_{0,\beta'}^S\,.\]

The following result, imprimitive case of the Yau-Zaslow conjecture, 
is due to
Klemm-Maulik-Pandharipande-Scheidegger
\cite{MR2669707} and  is the analogue of Theorems \ref{thm_takahashi_precise}-\ref{thm_takahashi_bps} for K3 surfaces.

\begin{thm}[\cite{MR2669707}]
For every effective curve class $\beta$ on $S$, the BPS invariant $n_{0,\beta}$ is independent of the divisibility of $\beta$, that is, depends on $\beta$
only through $\beta^2$.
\end{thm}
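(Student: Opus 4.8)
The plan is to reduce the computation of the reduced genus-$0$ Gromov--Witten invariants $N_{0,\beta}^S$ of K3 surfaces to the ordinary Gromov--Witten theory of an auxiliary Calabi--Yau threefold fibered in K3 surfaces, and then to exploit modularity. First I would recall that, because the holomorphic symplectic form forces the ordinary invariants in nonzero classes to vanish, one works throughout with the \emph{reduced} invariants, which are deformation invariant in families of K3 surfaces keeping $\beta$ of type $(1,1)$. By the global Torelli theorem and the near-surjectivity of monodromy on the relevant lattice, $N_{0,\beta}^S$ depends only on $\beta^2$ and on the divisibility $m$ of $\beta$, exactly as asserted in the text; the content of the theorem is that, after passing to the BPS invariants $n_{0,\beta}^S$, the dependence on $m$ drops out.

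The central tool is the Gromov--Witten/Noether--Lefschetz correspondence. Given a generic $1$-parameter family $\pi \colon \cX \to C$ of $\Lambda$-polarized K3 surfaces with $\cX$ a smooth threefold, I would express the genus-$0$ Gromov--Witten invariants of $\cX$ in fiber classes as a finite sum of the shape $\sum_{h,m} NL_{h,m}\, N_{0,h,m}^S$, where $NL_{h,m}$ is a Noether--Lefschetz number recording how often, and with which intersection profile and divisibility, the class becomes algebraic in the fibers, and $N_{0,h,m}^S$ is the reduced K3 invariant attached to a class of square $2h-2$ and divisibility $m$. The essential input is the modularity of Noether--Lefschetz numbers, following Borcherds and Kudla--Millson: the generating series of the $NL_{h,m}$ is a vector-valued modular form of weight and type determined by the lattice $\Lambda$.

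To turn this correspondence into actual values, I would compute the left-hand side independently for a well-chosen family, the natural candidate being the STU model, a K3-fibered Calabi--Yau threefold whose genus-$0$ Gromov--Witten invariants are accessible through mirror symmetry and heterotic--type II duality and assemble into explicit (quasi-)modular generating functions. Matching the two modular expressions, and using that the primitive invariants are already determined by the Yau--Zaslow formula (Beauville, Bryan--Leung), one extracts a system of linear relations. Solving it pins down the imprimitive reduced invariants and, after converting back to BPS invariants, forces $\sum_h n_{0,h}^S\, q^h$ to equal the expansion of $q/\Delta(q)$, which is manifestly independent of $m$; this yields the theorem.

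The main obstacle is organizational and arithmetic rather than conceptual. One must choose the polarizing lattice $\Lambda$, and possibly several families, so that classes of every prescribed divisibility genuinely occur in the fibers, and then verify that the finite-dimensional space of modular forms of the relevant weight, together with the known primitive values and the explicitly computed threefold invariants, supplies enough independent constraints to determine the imprimitive contributions uniquely. Establishing the Noether--Lefschetz modularity in precisely the form required, and controlling the multiple-cover contributions concealed inside the reduced invariants for non-primitive $\beta$, is where the genuine difficulty lies.
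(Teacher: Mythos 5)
Your outline is an accurate sketch of the Noether--Lefschetz strategy of Klemm--Maulik--Pandharipande--Scheidegger, which is exactly the proof the paper relies on (the theorem is quoted from \cite{MR2669707} rather than reproved here): the GW/NL correspondence for $\Lambda$-polarized families, Borcherds--Kudla--Millson modularity, explicit threefold computations via the STU model and mirror symmetry, the primitive Yau--Zaslow input, and the resulting linear system determining the imprimitive BPS counts. This matches the cited argument in both structure and key inputs, so there is nothing further to add beyond noting the technical point the paper itself highlights, namely that nodal fibers in the K3 families are handled by passing to small resolutions.
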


In the Noether-Lefschetz approach used in \cite{MR2669707}, 
families of K3 surfaces with nodal fibers can be replaced by their small resolutions. For a family of pairs $(\PP^2,E)$,
it is not possible to similarly get rid of the fibers with $E$ nodal, in which rational curves that we are trying to count can fall. 
This seems to make the study of $(\PP^2,E)$
more difficult than the study of K3 surfaces. Our proof of Theorems \ref{thm_takahashi_precise}-\ref{thm_takahashi_bps} for
$(\PP^2,E)$ uses an approach different from \cite{MR2669707}.

\subsubsection{Remark on the definition of relative BPS invariants}
Our definition of relative BPS invariants
by formula (\ref{equation_bps})
matches the definition of log BPS invariants used in 
\cite{bousseau2018quantum_tropical} and 
(up to a sign 
$(-1)^{d-1}$)
in \cite{choi2018log}.
However, given the multiple cover formula of \cite[Proposition 6.1]{MR2667135}, 
one might be tempted, as done in \cite{MR2667135, MR3228298, gabele2019tropical}, to call relative 
BPS invariants the numbers 
$m_{d,k}^{\PP^2/E}$ defined by 
\[ N_{0,d}^{\PP^2/E,k}
=\sum_{\substack{d'\in \Z_{>0}\\
k|d'|d}}
\frac{1}{(d/d')^2} \binom{(d/d')(3d'-1)-1}{(d/d')-1} m_{d',k}^{\PP^2/E}\,.\]
It is proved in \cite{MR3228298} that 
the integrality of the invariants 
$\Omega_{d,k}^{\PP^2/E}$ is equivalent to the integrality of the invariants 
$m_{d,k}^{\PP^2/E}$, and that the matrix comparing the two set of invariants has a natural interpretation in terms of Donaldson-Thomas invariants of loop quivers. 

Theorem
\ref{thm_takahashi_bps} takes such simple form only when phrased in terms of the invariants 
$\Omega_{d,k}^{\PP^2/E}$. In general, the invariants 
$m_{d,k}^{\PP^2/E}$ depend on $k$.
For example, we have $m_{2,1}^{\PP^2/E}=0$ and $m_{2,2}^{\PP^2/E}=1$, see \cite[Table 7.1]{gabele2019tropical} for more examples. 
One can of course express Theorem \ref{thm_takahashi_bps} in terms of 
the invariants $m_{d,k}^{\PP^2/E}$, but the resulting formula is more complicated than the simple statement that the invariants $\Omega_{d,k}^{\PP^2/E}$ do not depend on $k$.

\subsection{Structure of the proof of Theorems \ref{thm_takahashi_precise}-\ref{thm_takahashi_bps}}
\label{section_structure_proof_intro}

Our proof of Theorems \ref{thm_takahashi_precise}-\ref{thm_takahashi_bps}
relies on a connection with moduli spaces of one-dimensional Gieseker semistable sheaves on
$\PP^2$. 

\subsubsection{Coherent sheaves}
We refer to \cite{huybrechts2010geometry} as general reference on Gieseker semistable sheaves. 
For every $d \in \Z_{>0}$ and $\chi \in \Z$, we denote by $M_{d,\chi}$ the moduli space of S-equivalence classes of Gieseker semistable sheaves on $\PP^2$, supported on curves of degree $d$ and of Euler characteristic $\chi$.
It is proved in \cite{MR1263210} that, for every $d \in \Z_{>0}$ and $\chi \in \Z$, $M_{d,\chi}$ is a
nonempty integral normal projective variety of dimension $d^2+1$. If $d$ and $\chi$ are coprime, then $M_{d,\chi}$ is smooth.
However, $M_{d,\chi}$ is generally singular if $d$ and $\chi$ are not coprime.

Nevertheless, the intersection cohomology groups $IH^j(M_{d,\chi}, \Q)$ behave as well as cohomology of a smooth projective variety
\cite{MR572580,MR696691,MR751966}.
We denote by $Ie(M_{d,\chi})$ the corresponding intersection topological Euler characteristic.
According to \cite[Corollary 6.1.3]{bousseau2019scattering}, the intersection Euler characteristic is positive:
$Ie(M_{d,\chi}) \in \Z_{>0}$.
For every $d \in \Z_{>0}$ and $\chi \in \Z$, we define 
\[ \Omega_{d,\chi}^{\PP^2}
\coloneq 
(-1)^{\dim M_{d,\chi}}
Ie(M_{d,\chi})\in \Z\,.\]

We will prove that the invariants $\Omega_{d,\chi}^{\PP^2}$ are the  Donaldson-Thomas invariants for
one-dimensional sheaves on the Calabi-Yau 3-fold $K_{\PP^2}$  \cite{MR2951762}. 
The conjecture
\cite[Conjecture 6.20]{MR2951762}
on the $\chi$-independence of Donaldson-Thomas invariants of one-dimensional sheaves was proved for $K_{\PP^2}$, 
by combination of \cite{MR2264664, MR2892766, MR2215440, MR2250076} (see also \cite[Appendix A]{MR3861701}). 
Therefore, we have the following result.

\begin{thm}[=Theorem \ref{thm_joyce_conj}] \label{thm_joyce_conj_intro}
The intersection Euler characteristics 
$Ie(M_{d,\chi})$ is independent of $\chi$, that is, for every $d \in \Z_{>0}$
and $\chi, \chi' \in \Z$, we have 
\[ \Omega_{d,\chi}^{\PP^2}=\Omega_{d,\chi'}^{\PP^2}\,.\]
\end{thm}

\subsubsection{Gromov-Witten/sheaves correspondence}
Theorem \ref{thm_joyce_conj_intro} is formally similar to the rephrasing of Theorem \ref{thm_takahashi_precise} given by 
Theorem \ref{thm_takahashi_bps}. It is not a coincidence: Theorem \ref{thm_takahashi_bps} is a corollary of Theorem 
\ref{thm_joyce_conj_intro} and of
the following Theorem \ref{thm_local_relative_intro}.
We introduce some notation in order to state Theorem \ref{thm_log_bps_local_bps_intro}.
For every $d \in \Z_{>0}$ and $\chi \in \Z$, we define
\[ \ell_{d,\chi} \coloneq \frac{d}{\gcd(d,\chi)} \in  \Z_{>0}\,.\]
For every $G$ an abelian group and $x$ an element of $G$ of finite order divisible by $3$, we denote by $d(x)$ the smallest positive integer such that 
$(3d(x))x=0$ in $G$.
For every $\ell \in \Z_{>0}$, we denote by $r_\ell$ the number of elements $x \in \Z/(3\ell)$ such that $d(x)=\ell$.
For every $k, \ell \in \Z_{>0}$, we denote by
$s_{k,\ell}$ the number of $x=(a,b)
\in \Z/(3k) \times \Z/(3k)$
such that 
$d(x)=k$ and $d(a)=\ell$. The first values of $r_{\ell}$ and $s_{k,\ell}$ for small values of $k$ and $\ell$ are 
listed in Table \ref{table_r} and Table \ref{table_skl} respectively.

\begin{center}
\begin{table}[H]
\begin{tabular}{ c|c }
$\ell$ & $r_\ell$ \\
 \hline
 1 & 3  \\ 
 2 & 3   \\ 
 3 & 6  \\ 
\end{tabular}
\vspace{0.5cm}
\caption{First values of $r_\ell$ \label{table_r}}
\end{table}
\end{center}

\begin{center}
\begin{table}[H]
\begin{tabular}{ |c c c| }
\hline
$s_{1,1}=9$ & & \\
 $s_{2,1}=9$ & $s_{2,2}=18$&  \\ 
 $s_{3,1}=18$ &   & $s_{3,3}=54$ \\
 \hline
\end{tabular}
\vspace{0.5cm}
\caption{First values of $s_{k,\ell}$ \label{table_skl}}
\end{table}
\end{center}

\begin{thm} \label{thm_log_bps_local_bps_intro}
For every $d\in \Z_{>0}$ and $\chi \in \Z$, we have 
\[ 
\Omega_{d,\chi}^{\PP^2}=\sum_{\ell_{d,\chi}|k|d}
\frac{s_{k,\ell_{d,\chi}}}{r_{\ell_{d,\chi}}} \Omega_{d,k}^{\PP^2/E}\,.\]
\end{thm}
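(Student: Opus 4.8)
The plan is to deduce the identity by computing both sides from a single geometric object, the consistent scattering diagram $S(\fD^\iin_{\cl^+})$ of \cite{bousseau2019scattering} whose first walls are drawn in the figure above. This scattering diagram lives in a two-dimensional space which simultaneously carries the rank-two lattice of numerical classes $(d,\chi)$ of one-dimensional sheaves on $\PP^2$ and the tropicalisation of the contact geometry of $(\PP^2,E)$. The theorem will be precisely the comparison of its two natural readings, one in terms of the invariants $\Omega_{d,\chi}^{\PP^2}$ and one in terms of the invariants $\Omega_{d,k}^{\PP^2/E}$.

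First I would record the sheaf-theoretic reading. By \cite{bousseau2019scattering}, the scattering diagram $S(\fD^\iin_{\cl^+})$ describes wall-crossing in $\DC(\PP^2)$, and the wall attached to a numerical class $(d,\chi)$ carries a function whose logarithm is controlled by the intersection Euler characteristic $Ie(M_{d,\chi})$, hence by $\Omega_{d,\chi}^{\PP^2}$. The direction of this wall is the primitive vector underlying $(d,\chi)$, so the relevant primitivity is exactly $\ell_{d,\chi}=d/\gcd(d,\chi)$.

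Next I would record the Gromov--Witten reading and carry out the comparison. By the tropical correspondence for $(\PP^2,E)$, building on \cite{gabele2019tropical}, the same scattering diagram computes the maximal contact invariants $N_{0,d}^{\PP^2/E,p}$: a contact point $p\in P_d$ is a torsion element $x=(a,b)$ of $E[3d]\cong(\Z/(3d))^2$, and the tropicalisation of $E$ retains only one coordinate, a shadow $a\in\Z/(3d)$, while $b$ records a choice of lift. Passing to the invariants $\Omega_{d,k}^{\PP^2/E}$ uses the monodromy invariance of Lemma \ref{lem_monodromy}, so that the contribution of $p$ depends only on $d(p)=k$. The wall corresponding to $(d,\chi)$ has primitivity $\ell=\ell_{d,\chi}$ and is identified, under the tropical correspondence, with the contributions of the contact points whose shadow satisfies $d(a)=\ell$. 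Fixing one such shadow $a$, the sheaf-side coefficient $\Omega_{d,\chi}^{\PP^2}$ is matched with the sum $\sum_{b}\Omega_{d,\,d((a,b))}^{\PP^2/E}$ of the BPS contributions of all torsion lifts $(a,b)$. Reorganising by the type $k=d((a,b))$, and using that the number of lifts $\#\{b:\,d((a,b))=k\}$ is independent of the chosen shadow of type $\ell$ and equals $\tfrac{s_{k,\ell}}{r_\ell}$, turns this sum into $\sum_{\ell|k|d}\tfrac{s_{k,\ell}}{r_\ell}\Omega_{d,k}^{\PP^2/E}$, which is the asserted formula; the independence of the count on the chosen shadow is exactly what forces $\Omega_{d,\chi}^{\PP^2}$ to depend on $\chi$ only through $\ell_{d,\chi}$.

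The main obstacle is the geometric heart of the argument: establishing that the derived-category/Donaldson--Thomas reading and the log Gromov--Witten/tropical reading genuinely arise from one and the same scattering diagram $S(\fD^\iin_{\cl^+})$. This requires reconciling the deformation invariance of the Gromov--Witten side, which sees only $k=d(p)$, with the degeneration-dependent tropical side, which sees the shadow $a$, together with the behaviour of the torsion group $E[3d]$ under the degeneration producing the tropical picture, and the sign and dimension bookkeeping relating $(-1)^{\dim M_{d,\chi}}Ie(M_{d,\chi})$ to $(-1)^{d-1}N_{0,d}^{\PP^2/E,k}$. Once the two descriptions are matched along each wall, the counting of the lifts $b$ of a fixed shadow is elementary, so I expect essentially all the difficulty to lie in this geometric identification rather than in the final combinatorics.
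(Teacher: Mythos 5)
Your overall strategy is exactly the paper's: identify the scattering diagram $S(\fD^\iin_{\cl^+})$ of \cite{bousseau2019scattering} with the lift of the wall structure of \cite{cps}, read the functions attached to the unbounded vertical rays once via sheaf counting and once via the tropical correspondence of \cite{gabele2019tropical}, and equate the two. You also correctly locate the geometric heart of the matter (the identification of the two scattering diagrams and the matching of torsion points of $E$ with positions of vertical rays, which in the paper is Proposition \ref{prop_comparing_scatterings} together with Theorems \ref{thm_boussseau} and \ref{thm_gabele}).

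There is, however, one concrete gap in your final step. The wall function on a vertical ray does \emph{not} carry the BPS invariant $\Omega_{d,\chi}^{\PP^2}$ directly, nor the BPS invariants $\Omega_{d,k}^{\PP^2/E}$: on the sheaf side it carries the multicover sum $(-1)^{d-1}\sum_{\ell\mid(d,\chi)}\tfrac{1}{\ell^2}\Omega^{\PP^2}_{d/\ell,\chi/\ell}$, and on the Gromov--Witten side it carries the invariants $N_{0,d}^{\PP^2/E,k}$, which are themselves multicover sums $\sum_{k|d'|d}\tfrac{1}{(d/d')^2}\Omega^{\PP^2/E}_{d',k}$ up to sign. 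So the comparison of the two readings yields an identity between these convolved quantities (Theorem \ref{thm_sheaf_gw} in the paper), and the statement you are proving is the deconvolved, BPS-level identity. Your write-up asserts the BPS-level matching ``$\Omega_{d,\chi}^{\PP^2}=\sum_b\Omega^{\PP^2/E}_{d,d((a,b))}$'' directly, and the only combinatorics you perform is the count of lifts $b$, which produces the coefficients $s_{k,\ell}/r_\ell$ but does not strip the multicover sums. The missing step is an induction on $\gcd(d,\chi)$ (or an equivalent M\"obius-type inversion), whose key input is that for every divisor $d'$ with $\ell_{d,\chi}\mid d'\mid d$ one has $\ell_{d',\chi d'/d}=\ell_{d,\chi}$, so that the lower-order terms on the two sides cancel against each other. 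This is elementary but not automatic --- it is precisely what makes the coefficients $s_{k,\ell_{d,\chi}}/r_{\ell_{d,\chi}}$ pass unchanged from the convolved identity to the BPS identity --- and your proposal should include it.
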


Let $\Omega_d^{\PP^2}$ be the common value of the 
$\Omega_{d,\chi}^{\PP^2}$, which makes sense by Theorem 
\ref{thm_joyce_conj_intro}. We will show that Theorem \ref{thm_log_bps_local_bps_intro}
can be rewritten as the following Theorem \ref{thm_local_relative_intro}, which directly implies 
 Theorem 
\ref{thm_takahashi_bps}.

\begin{thm} \label{thm_local_relative_intro}
For every $d \in \Z_{>0}$ and $k \in \Z_{>0}$
dividing $d$, we have 
\[ \Omega_{d,k}^{\PP^2/E}=\frac{1}{3d} \Omega_d^{\PP^2}\,.\]
\end{thm}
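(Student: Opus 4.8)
The plan is to obtain Theorem \ref{thm_local_relative_intro} purely formally from Theorem \ref{thm_joyce_conj_intro} and Theorem \ref{thm_log_bps_local_bps_intro}, the only genuine input being an elementary counting identity among the coefficients $r_\ell$ and $s_{k,\ell}$. First I would use Theorem \ref{thm_joyce_conj_intro} to replace the left-hand side of Theorem \ref{thm_log_bps_local_bps_intro} by the $\chi$-independent value $\Omega_d^{\PP^2}$. As $\chi$ ranges over $\Z$, $\gcd(d,\chi)$ ranges over all divisors of $d$, so $\ell_{d,\chi}=d/\gcd(d,\chi)$ ranges over all divisors of $d$. Hence Theorem \ref{thm_log_bps_local_bps_intro} yields, for every divisor $\ell$ of $d$, the relation
\begin{equation*}
\Omega_d^{\PP^2}=\sum_{\ell\mid k\mid d}\frac{s_{k,\ell}}{r_\ell}\,\Omega_{d,k}^{\PP^2/E}.\tag{$\star_\ell$}
\end{equation*}
These are as many linear equations as there are divisors of $d$, in the same number of unknowns $\Omega_{d,k}^{\PP^2/E}$, $k\mid d$. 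Since $d((a,b))=k$ forces $3k\,a=0$ and hence $d(a)\mid k$, we have $s_{k,\ell}\neq 0$ only when $\ell\mid k$, so the coefficient matrix is triangular for the divisibility order, with diagonal entries $s_{\ell,\ell}/r_\ell$. As $s_{\ell,\ell}\geqslant r_\ell>0$ (take $b=0$), the system $(\star_\ell)$ has a unique solution.

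The main point is then to verify that the constant assignment $\Omega_{d,k}^{\PP^2/E}=\frac{1}{3d}\Omega_d^{\PP^2}$ solves $(\star_\ell)$. Substituting, this reduces to the purely combinatorial identity
\begin{equation*}
\sum_{\ell\mid k\mid d}s_{k,\ell}=3d\,r_\ell\qquad\text{for every }\ell\mid d.\tag{$\dagger$}
\end{equation*}
To prove $(\dagger)$ I would work with the subgroup $H_k:=\{a\in\Z/(3d): 3k\,a=0\}$ of $\Z/(3d)$: multiplication by $d/k$ gives an isomorphism $\Z/(3k)\xrightarrow{\sim}H_k$ that preserves the invariant $d(\cdot)$, since for $a=\frac{d}{k}a'$ the condition $3m\,a=0$ in $\Z/(3d)$ is equivalent to $k\mid m a'$, which is exactly the condition defining $d(a')$ in $\Z/(3k)$. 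Applying this isomorphism componentwise identifies the set counted by $s_{k,\ell}$ with $\{(a,b)\in(\Z/(3d))^2: d((a,b))=k,\ d(a)=\ell\}$. Summing over $k$ with $\ell\mid k\mid d$, and noting that every pair with $d(a)=\ell$ has $d((a,b))$ equal to some such $k$, the left-hand side of $(\dagger)$ becomes $\#\{(a,b)\in(\Z/(3d))^2: d(a)=\ell\}=3d\cdot\#\{a\in\Z/(3d):d(a)=\ell\}$; the same isomorphism with $k=\ell$ identifies the last count with $r_\ell$, giving $(\dagger)$.

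By $(\dagger)$ the constant solution satisfies every $(\star_\ell)$, and by the triangularity established above it is the \emph{unique} solution, so $\Omega_{d,k}^{\PP^2/E}=\frac{1}{3d}\Omega_d^{\PP^2}$ for all $k\mid d$, which is Theorem \ref{thm_local_relative_intro} and, in particular, the $k$-independence of Theorem \ref{thm_takahashi_bps}. The substantive inputs here are the $\chi$-independence of Theorem \ref{thm_joyce_conj_intro} and the sheaves/Gromov--Witten matching of Theorem \ref{thm_log_bps_local_bps_intro}, both assumed; within the present reduction the only step demanding care is the bookkeeping in $(\dagger)$, namely checking that the torsion identification $\Z/(3k)\cong H_k$ respects $d(\cdot)$ so that the local counts $s_{k,\ell}$ assemble correctly into the global count over $(\Z/(3d))^2$. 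I expect this to be the main, though still elementary, obstacle.
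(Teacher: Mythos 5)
Your proposal is correct and follows essentially the same route as the paper's proof in \cref{section_thm_local_relative_intro}: substitute the $\chi$-independence of Theorem \ref{thm_joyce_conj_intro} into Theorem \ref{thm_log_bps_local_bps_intro}, observe that the resulting system determines the $\Omega_{d,k}^{\PP^2/E}$ uniquely, and verify the constant solution via the counting identity $\sum_{\ell|k|d} s_{k,\ell}=3d\,r_\ell$. The only difference is that you spell out two points the paper leaves implicit, namely the triangularity justifying uniqueness and the $d(\cdot)$-preserving identification $\Z/(3k)\simeq H_k\subset\Z/(3d)$ that lets the local counts $s_{k,\ell}$ and $r_\ell$ be read as counts inside $(\Z/(3d))^2$ and $\Z/(3d)$.
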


Theorem \ref{thm_log_bps_local_bps_intro} gives a connection between two distinct worlds: relative Gromov-Witten theory of $(\PP^2,E)$ and moduli spaces of semistable one-dimensional sheaves on $\PP^2$. 

We will prove
Theorem \ref{thm_log_bps_local_bps_intro} by combination of the main result of \cite{gabele2019tropical} with the main result of \cite{bousseau2019scattering}.
In \cite{gabele2019tropical}, Gr\"afnitz considers a normal crossing degeneration of 
$(\PP^2,E)$ and applies the formalism of log Gromov-Witten theory 
\cite{MR3011419, MR3224717, MR3257836, abramovich2017decomposition} to compute the invariants $N_{0,d}^{\PP^2/E,k}$ in terms of the special fiber, and then in terms of the tropicalization $B$ of the special fiber. 
The main result of \cite{gabele2019tropical} computes the Gromov-Witten invariants $N_{0,d}^{\PP^2/E,k}$ in terms of a wall-structure $\mathcal{S}$ on $B$. This wall-structure previously appeared in \cite[Example 2.4]{cps} in the context of mirror symmetry for $(\PP^2,E)$. The tropical correspondence result of \cite{gabele2019tropical} was expected from the point of view of mirror symmetry.

On the other hand, we proved  in \cite{bousseau2019scattering} that a scattering diagram $S(\fD^\iin_{\cl^+})$ can be used to algorithmically compute the intersection Euler characteristic of the moduli spaces of Gieseker semistable sheaves on $\PP^2$.
We will prove Theorem \ref{thm_log_bps_local_bps_intro} by
first showing that the wall-structure $\mathcal{S}$ of 
\cite{gabele2019tropical} is related to the scattering diagram 
$S(\fD^\iin_{\cl^+})$ of \cite{bousseau2019scattering}, and then applying Theorem \ref{thm_joyce_conj_intro}
on the sheaf side.

A natural question is to try to extend the sheaf/ Gromov--Witten correspondence of Theorem \ref{thm_log_bps_local_bps_intro} to other types of Gromov--Witten invariants of $(\PP^2,E)$ such as the ones recently considered in \cite{grafnitz2022proper} in relation with the broken lines of the wall-structure $\mathcal{S}$. 
We expect the sheaf side to involve an appropriate notion of framing, but we leave the question open for the present paper.

\subsubsection{Analogy with \cite{MR2667135, MR2662867, bousseau2018quantum_tropical}.}
Gross-Pandharipande-Siebert \cite{MR2667135} proved a tropical correspondence theorem
 between local scattering diagrams in $\R^2$ and log Gromov-Witten invariants 
 of log Calabi-Yau surfaces $(Y,D)$ with $D$ a cycle of rational curves.
By combination with the work of Reineke \cite{MR2801406, MR2650811} on wall-crossing for Donaldson-Thomas invariants of quivers,
they obtained a correspondence between log Gromov-Witten invariants 
of log Calabi-Yau surfaces and Donaldson-Thomas invariants of some acyclic quivers.
This correspondence has been extended and generalized in various directions
\cite{MR2662867, MR3004575, MR3033514, MR3383167}, notably including a refined Donaldson-Thomas/higher-genus Gromov-Witten version \cite{MR3904449, bousseau2018quantum_tropical}.

The present paper describes an analogous story: the pair 
$(Y,D)$ with $D$ a cycle of rational curves being replaced by the pair 
$(\PP^2,E)$ with $E$ a smooth genus-$1$ curve, 
the representations of the acyclic quivers being replaced by the objects of
$\D^b(\PP^2)$, the tropical correspondence theorem of \cite{MR2667135} 
being replaced by the tropical correspondence theorem of 
\cite{gabele2019tropical}, and the work of Reineke \cite{MR2801406, MR2650811} being replaced by 
\cite{bousseau2019scattering}. In fact, it is similar but with one more level of 
difficulty:
the scattering diagram $S(\fD^\iin_{\cl^+})$ 
consists in infinitely many local scatterings, 
and the category $\D^b(\PP^2)$ is much richer 
than the category of representations of an acyclic quiver. 
In some sense, our story is built from infinitely many local pieces, 
each one being exactly of the same nature as 
\cite{MR2667135, MR2662867}.

\subsection{Higher-genus/refined correspondence}

We will in fact prove a more general version of Theorem \ref{thm_log_bps_local_bps_intro}, involving higher-genus Gromov-Witten invariants of $(\PP^2,E)$ and Betti numbers of the moduli spaces $M_{d,\chi}$.

\subsubsection{Higher-genus Gromov-Witten theory of $(\PP^2,E)$}
\label{section_higher_genus_intro}
For every $g \in \Z_{\geq 0}$, let
$\overline{M}_{g}(\PP^2/E,d)$ be the moduli space of 
genus-$g$ degree-$d$ stable maps to $\PP^2$ relative to $E$,
with maximal contact order with $E$ at a single point.
The moduli space $\overline{M}_{g}(\PP^2/E,d)$ is a proper Deligne-Mumford stack, admitting a $g$-dimensional
virtual fundamental class
$[\overline{M}_{g}(\PP^2/E,d)]^{\virt}$.

If $\pi \colon \cC \rightarrow 
\overline{M}_{g}(\PP^2/E,d)$ is the universal curve,
of relative dualizing sheaf 
$\omega_\pi$, then the Hodge bundle
$E\coloneqq \pi_{*}\omega_\pi$
is a rank-$g$ vector bundle over 
$\overline{M}_{g}(\PP^2/E,d)$. Its Chern classes 
are classically called the lambda classes
\cite{MR717614},
$\lambda_j \coloneqq c_j(\E)$
for $0 \leqslant j \leqslant g$.
We denote by
\[N_{g,d}^{\PP^2/E} \coloneq \int_{[\overline{M}_{g}(\PP^2/E,d)]^{\virt}} (-1)^g \lambda_g \in \Q \,,\]
the corresponding relative genus-$g$ Gromov-Witten invariant of $(\PP^2,E)$.
The invariants 
$N_{g,d}^{\PP^2/E}$ are analogous to the higher-genus Gromov-Witten invariants of K3 surfaces involved in the Katz-Klemm-Vafa conjecture \cite{MR2746343}.

Let $\ev \colon \overline{M}_g(\PP^2/E,d) \rightarrow E$ be the evaluation at the contact point with $E$. We show in Lemma \ref{lem_contact_torsion}, that if $p$ is in the image of $\ev$, then $p-p_0$ is necessarily a $(3d)$-torsion point of
the group
$\Pic^0(E)$ of degree-$0$ cycles on $E$ up to linear equivalence, as in genus $0$. Thus, we can define higher-genus versions $N_{g,d}^{\PP^2/E,p}$
of the genus-$0$ invariants $N_{0,d}^{\PP^2/E,p}$, $p\in P_d$.
We show in Lemma \ref{lem_monodromy} that, as in genus $0$, the invariants $N_{g,d}^{\PP^2/E,p}$ only depend on $p$ through $d(p)$, and so we can define higher-genus versions $N_{g,d}^{\PP^2/E,k}$ of the 
genus-$0$ invariants $N_{0,d}^{\PP^2/E,k}$.

We state a conjectural higher-genus version of Theorem \ref{thm_takahashi_precise}.

\begin{conj}\label{conj_takahashi_higher_genus_intro}
For every $d \in \Z_{>0}$ and $k \in \Z_{>0}$ dividing $d$,
we have the equality
\[ (-1)^{d-1}
\sum_{g \geqslant 0}N_{g,d}^{\PP^2/E,k}
\hbar^{2g-1} 
=\sum_{\substack{d'\in \Z_{\geqslant 1} 
\\k|d'|d}} \frac{1}{(d/d')}
(-1)^{d'-1}
\sum_{g \geqslant 0} N_{g,d'}^{\PP^2/E,d'}((d/d')\hbar)^{2g-1}\]
in $\Q[\![\hbar]\!]$.
\end{conj}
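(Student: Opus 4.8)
The plan is to mirror the genus-$0$ strategy of this paper: translate the higher-genus relative Gromov-Witten invariants into refined invariants of the moduli spaces $M_{d,\chi}$ via a refined tropical correspondence, and then deduce the multiple-cover formula from a refined $\chi$-independence statement on the sheaf side. The first step is to reformulate Conjecture \ref{conj_takahashi_higher_genus_intro} as a pure independence statement. The operations $G(\hbar)\mapsto \frac{1}{\ell}G(\ell\hbar)$, indexed by $\ell\in\Z_{>0}$, satisfy $T_\ell\circ T_m=T_{\ell m}$ and $T_1=\id$, so summing them over divisors gives a Dirichlet-type convolution that is invertible by M\"obius inversion. Hence there is a unique family of refined relative BPS generating functions $\Omega_{d,k}^{\PP^2/E}(\hbar)\in\Q[\![\hbar]\!]$ with
\[(-1)^{d-1}\sum_{g\geqslant 0}N_{g,d}^{\PP^2/E,k}\hbar^{2g-1}=\sum_{\substack{d'\in\Z_{\geqslant 1}\\ k|d'|d}}\frac{1}{(d/d')}\,\Omega_{d',k}^{\PP^2/E}\big((d/d')\hbar\big)\,.\]
Since the primitive generating function is exactly $\Omega_{d,d}^{\PP^2/E}(\hbar)=(-1)^{d-1}\sum_{g}N_{g,d}^{\PP^2/E,d}\hbar^{2g-1}$, uniqueness of the M\"obius inversion shows that Conjecture \ref{conj_takahashi_higher_genus_intro} is \emph{equivalent} to the assertion that $\Omega_{d,k}^{\PP^2/E}(\hbar)$ is independent of $k$, exactly as Theorem \ref{thm_takahashi_bps} reformulates Theorem \ref{thm_takahashi_precise} in genus $0$. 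This reduces the problem to a single refinement-independence statement, for which the sheaf side is the natural tool.

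Next I would establish a refined Gromov-Witten/sheaves correspondence, i.e.\ a higher-genus analog of Theorem \ref{thm_log_bps_local_bps_intro}, relating the $\Omega_{d,k}^{\PP^2/E}(\hbar)$ to the intersection Poincar\'e polynomials of $M_{d,\chi}$ under the change of variables $y=e^{i\hbar}$. This has two ingredients running in parallel to the genus-$0$ argument. On the Gromov-Witten side, one extends Gabele's tropical correspondence \cite{gabele2019tropical} to the $\lambda_g$-twisted invariants $N_{g,d}^{\PP^2/E,k}$, so that the genus is recorded by $\hbar$ and the classical wall-structure $\mathcal{S}$ is replaced by its quantum deformation, exactly as \cite{bousseau2018quantum_tropical} refines \cite{MR2667135}. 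On the sheaf side, the scattering diagram $S(\fD^\iin_{\cl^+})$ of \cite{bousseau2019scattering} is already quantum and in fact computes the intersection Betti numbers of $M_{d,\chi}$, not merely the Euler characteristics $Ie(M_{d,\chi})$. Matching the two quantum wall-structures under $y=e^{i\hbar}$ would then upgrade Theorem \ref{thm_log_bps_local_bps_intro} to a refined correspondence expressing $\Omega_{d,k}^{\PP^2/E}(\hbar)$ in terms of refined invariants $\Omega_{d,\chi}^{\PP^2}(y)$ built from the intersection Poincar\'e polynomials, with the same combinatorial coefficients $r_\ell$, $s_{k,\ell}$.

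Finally I would conclude by invoking a refined $\chi$-independence for $M_{d,\chi}$: that the full intersection Poincar\'e polynomial of $M_{d,\chi}$ depends only on $d$ and not on $\chi$. Combined with the refined correspondence, this gives the $k$-independence of $\Omega_{d,k}^{\PP^2/E}(\hbar)$, hence the conjecture. The main obstacle is precisely this last input. The genus-$0$ proof needs only $\chi$-independence of the \emph{Euler characteristics} $Ie(M_{d,\chi})$ (Theorem \ref{thm_joyce_conj_intro}), which follows from known Donaldson-Thomas computations. Its cohomological refinement---independence of all the Betti numbers, a $P=W$/cohomological-integrality-type statement for one-dimensional sheaves on $\PP^2$---is a genuinely deeper and, at present, unavailable result, which is why I expect the statement to remain conjectural until such a refined $\chi$-independence theorem is proved. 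The extension of \cite{gabele2019tropical} to $\lambda_g$-insertions is a second, more technical but presumably tractable, obstacle, since the refined vertex calculus of \cite{bousseau2018quantum_tropical} already provides the template for the quantum deformation of $\mathcal{S}$.
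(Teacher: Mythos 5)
Your proposal matches the paper's treatment of this statement essentially exactly: the statement is left as a conjecture here as well, and the paper's contribution (Theorems \ref{thm_log_bps_local_bps_refined_intro} and \ref{thm_equiv_conj_intro}) is precisely the reduction you describe --- rephrasing Conjecture \ref{conj_takahashi_higher_genus_intro} as $k$-independence of higher-genus relative BPS invariants, proving the refined tropical/quantum-scattering correspondence with the intersection Poincar\'e polynomials of $M_{d,\chi}$ under $y=e^{i\hbar}$, and thereby showing the conjecture is equivalent to the refined $\chi$-independence Conjecture \ref{conj_refined_joyce_conj_intro}, which remains open. The only cosmetic difference is your choice of BPS normalization: the paper inserts factors $2\sin(\hbar/2)/2\sin((d/d')\hbar/2)$ into the multicover formula (so that the BPS functions match the refined DT side directly), whereas you omit them; since the two families of invariants differ by the invertible factor $2\sin(\hbar/2)$, the $k$-independence statements are equivalent and your M\"obius-inversion argument for the equivalence goes through unchanged. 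You have correctly identified that the refined $\chi$-independence of the intersection Betti numbers is the genuine missing ingredient, which is exactly why the paper states this as a conjecture rather than a theorem.
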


We define, for every 
$d \in \Z_{>0}$ and $k \in \Z_{>0}$
dividing $d$:
\[ \overline{\Omega}^{\PP^2/E}_{d,k}
(\hbar)
\coloneq (-1)^{d-1}\left(2 \sin (\hbar/2) \right)
\left( \sum_{g\geqslant 0} N_{g,d}^{\PP^2/E,k} \hbar^{2g-1} \right)\in \Q[\![\hbar]\!]\,.\]
There exists a unique collection of higher-genus relative BPS invariants
$\Omega_{d,k}^{\PP^2/E}(\hbar) \in \Q[\![\hbar]\!]$, indexed by $d\in \Z_{>0}$ and $k\in \Z_{>0}$ dividing $d$, such that
\[ \overline{\Omega}^{\PP^2/E}_{d,k}
(\hbar)
=\sum_{\substack{d'\in \Z_{\geqslant 1}
\\ k|d'|d}}
\frac{1}{(d/d')}\frac{2\sin(\hbar/2)}{
2\sin((d/d')\hbar/2)} 
\Omega^{\PP^2/E}_{d',k}((d/d')\hbar)\,.\]
Indeed, this relation can be inverted by the M\"obius inversion formula.

Thus, we can rephrase Conjecture \ref{conj_takahashi_higher_genus_intro} as
follows.

\begin{conj} \label{conj_tak_higher_genus_bps_intro}
For every $d \in \Z_{>0}$, the higher-genus relative BPS invariant
$\Omega_{d,k}^{\PP^2/E}(\hbar)$
is independent of $k$, 
that is, for every 
$k, k'\in \Z_{>0}$ divisors of $d$,
we have 
\[ \Omega_{d,k}^{\PP^2/E}(\hbar)
=\Omega_{d,k'}^{\PP^2/E}(\hbar)\,.\]
\end{conj}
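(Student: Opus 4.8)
The plan is to run the same three-step strategy that proves the genus-$0$ statement (Theorem~\ref{thm_takahashi_bps}), but now in a refined/quantum form in which the signed intersection Euler characteristics $\Omega_{d,\chi}^{\PP^2}$ are upgraded to intersection Poincar\'e polynomials. Concretely, for every $d\in\Z_{>0}$ and $\chi\in\Z$ I would define a refined invariant $\Omega_{d,\chi}^{\PP^2}(y)$ built from the intersection cohomology $IH^\ast(M_{d,\chi},\Q)$, normalized so that its specialization at $y=1$ recovers $\Omega_{d,\chi}^{\PP^2}$. The goal is then to establish a refined Gromov--Witten/sheaves correspondence expressing the higher-genus relative BPS invariant $\Omega_{d,k}^{\PP^2/E}(\hbar)$ in terms of the $\Omega_{d,\chi}^{\PP^2}(y)$ under the change of variables $y=e^{i\hbar}$, and to feed this into a refined $\chi$-independence statement on the sheaf side; the desired $k$-independence of $\Omega_{d,k}^{\PP^2/E}(\hbar)$ then follows exactly as in genus $0$.

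First I would set up the sheaf side. The refined invariants $\Omega_{d,\chi}^{\PP^2}(y)$ are the refined Donaldson--Thomas invariants of one-dimensional sheaves on $K_{\PP^2}$, equivalently the refined genus-$0$ Gopakumar--Vafa invariants of local $\PP^2$. The key input here is the \emph{refined} analogue of Theorem~\ref{thm_joyce_conj_intro}: the $y$-series $\Omega_{d,\chi}^{\PP^2}(y)$ should be independent of $\chi$. This is the refined version of the Joyce conjecture \cite[Conjecture~6.20]{MR2951762} on one-dimensional sheaves, and it is the single most substantial external ingredient; it is known for small $d$ and is expected in general, but a proof in all degrees (e.g.\ via $\chi$-independence of the perverse/BPS cohomology of $M_{d,\chi}$) is what makes the present statement a conjecture rather than a theorem. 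Granting it, I would then use the quantum refinement of the scattering diagram $S(\fD^\iin_{\cl^+})$ from \cite{bousseau2019scattering}, whose wall-functions live in the quantum torus algebra with quantum parameter $q$, to compute the $\Omega_{d,\chi}^{\PP^2}(y)$ algorithmically, exactly as the unrefined scattering computes the $\Omega_{d,\chi}^{\PP^2}$.

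Next I would establish the refined correspondence itself by extending Gabele's tropical computation \cite{gabele2019tropical} from genus $0$ to all genera with the $(-1)^g\lambda_g$ insertion. The expectation, by analogy with the quantum tropical vertex of \cite{MR3904449, bousseau2018quantum_tropical}, is that the generating series $\sum_{g\geq 0} N_{g,d}^{\PP^2/E,k}\hbar^{2g-1}$ assembled with the $2\sin(\hbar/2)$ factors is computed by a \emph{quantum} wall-structure on the tropicalization $B$, in which each local vertex contributes a $q$-commutator of automorphisms of the quantum torus with $q=e^{i\hbar}$. Matching this quantum wall-structure with the quantum scattering diagram $S(\fD^\iin_{\cl^+})$ wall-by-wall, as in the genus-$0$ identification underlying Theorem~\ref{thm_log_bps_local_bps_intro}, would yield the refined correspondence $\Omega_{d,k}^{\PP^2/E}(\hbar)=\frac{1}{3d}\,\Omega_d^{\PP^2}(e^{i\hbar})$, where $\Omega_d^{\PP^2}(y)$ is the common $\chi$-independent value; this immediately gives Conjecture~\ref{conj_tak_higher_genus_bps_intro}.

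The main obstacle is the higher-genus tropical correspondence of the previous paragraph: unlike the classical tropical vertex, the full diagram $S(\fD^\iin_{\cl^+})$ consists of infinitely many local scatterings, so one needs a higher-genus/log degeneration analysis valid at each vertex together with control of how the $\lambda_g$ class distributes over the (possibly contracted and multiply covered) components of stable maps in the special fiber. Establishing this requires genuinely generalizing \cite{gabele2019tropical} to higher genus, which is the technical heart of the problem; by comparison, the refined $\chi$-independence on the sheaf side, while also deep, is cleanly isolated as the one conjectural input. I expect the matching of the two quantum scattering structures to be, as in genus $0$, essentially formal once both the higher-genus tropical correspondence and the refined $\chi$-independence are in place.
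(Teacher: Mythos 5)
Your proposal matches the paper's treatment of this statement: the paper leaves it as a conjecture, and its Theorem~\ref{thm_equiv_conj_intro} proves precisely the conditional implication you describe — the refined correspondence of Theorem~\ref{thm_log_bps_local_bps_refined_intro} (established via the quantum scattering diagram $S(\fD^\iin_{y^+})$ and the higher-genus extension of Gabele's tropical correspondence in Theorem~\ref{thm_gabele_refined}) reduces the $k$-independence of $\Omega_{d,k}^{\PP^2/E}(\hbar)$ to the refined $\chi$-independence Conjecture~\ref{conj_refined_joyce_conj_intro}, with the identity $\sum_{\ell_{d,\chi}|k|d} s_{k,\ell_{d,\chi}}/r_{\ell_{d,\chi}}=3d$ solving the resulting recursion and yielding $\Omega_{d,k}^{\PP^2/E}(\hbar)=\frac{1}{3d}\Omega_d^{\PP^2}(y^{\frac{1}{2}})$. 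You correctly isolate the refined $\chi$-independence as the single unproven input; the paper additionally shows the two conjectures are in fact equivalent.
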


\subsubsection{Refined sheaf counting}
We denote by $Ib_{j}(M_{d,\chi})$ the  intersection Betti numbers of the moduli spaces $M_{d,\chi}$ of one-dimensional Gieseker semistable sheaves.
According to \cite[Corollary 6.1.3]{bousseau2019scattering}, the odd-degree part of intersection cohomology of $M_{d,\chi}$ vanishes: 
$Ib_{2k+1}(M_{d,\chi})=0$, and so
$Ie(M_{d,\chi}) \in \Z_{>0}$.
For every $d \in \Z_{>0}$ and $\chi \in \Z$, we define 
\[ \Omega_{d,\chi}^{\PP^2}(y^{\frac{1}{2}}) \coloneq 
(-y^{\frac{1}{2}})^{-\dim M_{d,\chi}}
\sum_{j=0}^{\dim M_{d,\chi}}
Ib_{2j}(M_{d,\chi}) y^{j} \in \Z[y^{\pm \frac{1}{2}}]\,.\]
We will prove in Proposition \ref{prop_joyce_song} that the invariants $\Omega_{d,\chi}^{\PP^2}(y^{\frac{1}{2}})$ are the
refined Donaldson--Thomas invariants for
one-dimensional sheaves on $K_{\PP^2}$ \cite{MR2951762}.
Therefore, the following Conjecture \ref{conj_refined_joyce_conj_intro}
is a refined version of \cite[Conjecture 6.20]{MR2951762}\footnote{\cite[Conjecture 6.20]{MR2951762} is stated for Donaldson--Thomas counts of one-dimensional Gieseker semistable sheaves on arbitrary 
Calabi-Yau 3-folds. One can similarly state a version of Conjecture \ref{conj_refined_joyce_conj_intro} for refined Donaldson-Thomas counts of one-dimensional Gieseker semistable sheaves on arbitrary Calabi-Yau 3-folds. The specialization of this conjecture to the case of flopping curves is discussed in \cite{davison2019refined}. }.

\begin{conj} \label{conj_refined_joyce_conj_intro}
The intersection Betti numbers $Ib_j(M_{d,\chi})$ are independent 
of $\chi$, that is, for every 
$d\in \Z_{>0}$, $\chi, \chi' \in \Z$, we have 
\[ \Omega_{d,\chi}^{\PP^2}
(y^{\frac{1}{2}})=
\Omega_{d,\chi'}^{\PP^2}(y^{\frac{1}{2}})\,.\]
\end{conj}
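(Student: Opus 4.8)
The plan is to lift the entire scattering-diagram computation of \cite{bousseau2019scattering} to the refined level and to deduce Conjecture \ref{conj_refined_joyce_conj_intro} as the quantum analogue of Theorem \ref{thm_joyce_conj_intro}. The first step is to work in the quantum torus algebra attached to the Chern-character lattice of one-dimensional objects of $\D^b(\PP^2)$, with quantum parameter $y$, and to replace the classical wall-crossing automorphisms building $S(\fD^\iin_{\cl^+})$ by their quantum-dilogarithm lifts. The output of this refined scattering is, for each ray of class $(d,\chi)$, a refined Donaldson--Thomas invariant; the identification of $\Omega_{d,\chi}^{\PP^2}(y^{\frac12})$ with this invariant is exactly what the paper establishes, and \cite[Corollary 6.1.3]{bousseau2019scattering} already supplies the purity and odd-vanishing needed to read the intersection Poincaré polynomial $\sum_j Ib_{2j}(M_{d,\chi})\,y^j$ off the refined scattering. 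The task then becomes purely a statement about the refined scattering diagram: the invariant attached to a ray of class $(d,\chi)$ should depend only on $d$.

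The easy reduction is to work modulo $d$. Tensoring by $\cO(1)$ sends a pure one-dimensional sheaf of degree $d$ and Euler characteristic $\chi$ to one of Euler characteristic $\chi+d$, inducing an isomorphism $M_{d,\chi}\cong M_{d,\chi+d}$ and hence an equality $\Omega_{d,\chi}^{\PP^2}(y^{\frac12})=\Omega_{d,\chi+d}^{\PP^2}(y^{\frac12})$. (A Serre-duality type involution $F\mapsto \mathcal{E}xt^1(F,\omega)$ gives the further symmetry $\chi\mapsto -\chi$.) On the scattering side this is the manifest symmetry coming from the autoequivalence $\otimes\,\cO(1)$ of $\D^b(\PP^2)$ acting on the charge lattice. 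This already reduces Conjecture \ref{conj_refined_joyce_conj_intro} to the comparison of the finitely many residues of $\chi$ modulo $d$.

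For the residues, I would refine the wall-crossing argument underlying Theorem \ref{thm_joyce_conj_intro}. The unrefined $\chi$-independence for $K_{\PP^2}$ is obtained by relating the numerical invariants across residues through an integration/Hall-algebra comparison, using \cite{MR2264664, MR2892766, MR2215440, MR2250076}; since Reineke-type wall-crossing produces full Poincaré polynomials and not only Euler characteristics, the natural strategy is to run the same comparison through the \emph{motivic} (quantum) integration map, so that the same cancellation between residue classes takes place at the level of $y$-polynomials rather than of their specializations at $y=1$. Concretely, I would express the refined invariant of each residue class as a universal quantum-dilogarithm factorization of the same total wall-crossing operator in $S(\fD^\iin_{\cl^+})$, and argue that the operator's factorization forces the rays of fixed $d$ to carry a common refined invariant.

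The genuine obstacle is precisely this last cancellation. Passing from Euler characteristics to Poincaré polynomials is not formal: the quantum integration map outputs \emph{naive} refined invariants, and to identify these with intersection Betti numbers one must prove purity in families and the statement that BPS cohomology equals intersection cohomology of $M_{d,\chi}$ (a Davison--Meinhardt type input) uniformly over all residues, including the most singular classes where $\gcd(d,\chi)$ is large and $M_{d,\chi}$ acquires substantial singularities. Equivalently, one must show that the non-reduced and reducible support curves contribute to the refined invariant in a residue-independent way — this is the refined counterpart of cohomological $\chi$-independence via the Hilbert--Chow morphism $M_{d,\chi}\to|\cO(d)|$ and its perverse filtration. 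Controlling the full perverse Poincaré polynomial rather than its Euler characteristic, and proving that it is insensitive to $\chi$, is where I expect the real difficulty to lie, and is the reason the statement is recorded here only as a conjecture.
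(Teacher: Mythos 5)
The statement you are trying to prove is recorded in the paper as a \emph{conjecture}, and the paper does not prove it: the author explicitly writes ``We do not know how to prove Conjecture \ref{conj_refined_joyce_conj_intro}'' and only establishes (i) the partial result that the intersection Betti numbers depend on $\chi$ only through $\gcd(d,\chi)$ (Theorem \ref{thm_chi_independence_intro}, deduced from the refined sheaves/Gromov--Witten correspondence), (ii) the equivalence of the conjecture with the higher-genus Takahashi conjecture (Theorem \ref{thm_equiv_conj_intro}), and (iii) the cases $d\leqslant 4$ by citation. Your proposal likewise does not constitute a proof, and to your credit you say so: the step you flag as ``the genuine obstacle'' is exactly the step that is open. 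The unrefined $\chi$-independence (Theorem \ref{thm_joyce_conj_intro}) is proved in the paper by passing through Toda's equivalence with the strong rationality conjecture for stable pairs and Konishi's topological-vertex verification of that rationality for local toric surfaces; none of these inputs is currently available in refined form, and the paper itself remarks that a refined topological vertex analysis ``could'' yield the conjecture but postpones this to future work. Your alternative route --- running the comparison through a motivic integration map and then invoking a Davison--Meinhardt type identification of naive refined DT invariants with intersection cohomology uniformly over all residues --- is a sensible framing of the difficulty, but it is a restatement of what needs to be proved rather than an argument.

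Two smaller points. First, your ``easy reduction'' via $\otimes\,\cO(1)$ and Serre duality only identifies $M_{d,\chi}$ with $M_{d,\chi'}$ when $\chi=\pm\chi'\bmod d$; this is strictly weaker than what the paper actually proves, namely Theorem \ref{thm_chi_independence_intro}, which gives equality of all intersection Betti numbers whenever $\gcd(d,\chi)=\gcd(d,\chi')$. That stronger partial result does not come from any symmetry of the moduli spaces (for $d\geqslant 3$ and $\chi\neq\pm\chi'\bmod d$ the varieties are non-isomorphic by Woolf); it comes from translating to the Gromov--Witten side of Theorem \ref{thm_log_bps_local_bps_refined_intro}, where the dependence on $\chi$ manifestly factors through $\ell_{d,\chi}=d/\gcd(d,\chi)$. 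If you want to make progress beyond the paper, the remaining gap is precisely to compare residues with \emph{different} values of $\gcd(d,\chi)$, and the monodromy of the elliptic curve (which is what powers the $\gcd$ statement) cannot see that difference. Second, the identification of the quantum scattering output with $\Omega_{d,\chi}^{\PP^2}(y^{1/2})$ that you take as your starting point is indeed available (it is \cite[Theorem 5.2.3]{bousseau2019scattering} together with Theorem \ref{thm_boussseau_refined}), so that part of your setup is sound; the problem is entirely in the cancellation you defer to the end.
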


Conjecture \ref{conj_refined_joyce_conj_intro} was already
stated as \cite[Conjecture 0.4.3]{bousseau2019scattering} and it was proved in 
\cite[Theorem 0.4.5]{bousseau2019scattering} that Conjecture \ref{conj_refined_joyce_conj_intro} holds for $d \leq 4$.

\subsubsection{Higher-genus/refined correspondence}
The following Theorem \ref{thm_log_bps_local_bps_refined_intro} is a generalization of Theorem \ref{thm_log_bps_local_bps_intro}
and establishes a correspondence between higher-genus 
relative Gromov-Witten theory of $(\PP^2,E)$ and refined sheaf counting on local $\PP^2$. 

We stress that it is not an example of the Gromov-Witten/stable pairs correspondence of 
\cite{MR2264664}. 
The Gromov-Witten/stable pairs correspondence involves a similar change of variables $q=e^{i\hbar}$, where $\hbar$ is the genus variable on the Gromov-Witten side. 
On the stable pair side, we have unrefined invariants and $q$ is the variables keeping track of the holomorphic Euler characteristic. 
By contrast, in Theorem \ref{thm_log_bps_local_bps_refined_intro}, we have refined invariants on the sheaf side and $y$ is the refined/cohomological variable. 
Furthermore, in the Gromov-Witten/stable pairs correspondence, both sides are attached to the same geometry, whereas Theorem \ref{thm_log_bps_local_bps_refined_intro} involves a relative geometry on the Gromov-Witten side but not on the sheaf side.
In fact, one could combine Theorem \ref{thm_log_bps_local_bps_refined_intro} with the Gromov-Witten/stable pairs correspondence applied to $(\PP^2 \times \A^1, E \times \A^1)$. The result is a connection between unrefined sheaf counting on $(\PP^2 \times \A^1, E \times \A^1)$ and refined one-dimensional sheaf counting on $K_{\PP^2}$, the variable $q$ on the unrefined side becoming the variable $y$ on the refined side.

\begin{thm}\label{thm_log_bps_local_bps_refined_intro}
For every $d\in \Z_{>0}$ and $\chi \in \Z$, we have the equality
\[ 
\Omega_{d,\chi}^{\PP^2}(y^{\frac{1}{2}})=\sum_{\ell_{d,\chi}|k|d}
\frac{s_{k,\ell_{d,\chi}}}{r_{\ell_{d,\chi}}} \Omega_{d,k}^{\PP^2/E}(\hbar)\,\]
after the change of variables 
\[y=e^{i\hbar}=\sum_{n\geqslant 1}\frac{(i\hbar)^n}{n!}\,.\]
\end{thm}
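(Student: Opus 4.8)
The plan is to prove the refined correspondence by identifying two a priori distinct scattering diagrams living on the tropical base $B$ attached to the normal crossing degeneration of $(\PP^2,E)$: a higher-genus Gromov--Witten scattering diagram $\mathcal{S}_\hbar$, whose wall functions repackage the $\lambda_g$-twisted relative invariants $N_{g,d}^{\PP^2/E,k}$ through the series $\overline{\Omega}^{\PP^2/E}_{d,k}(\hbar)$, and the sheaf-theoretic scattering diagram $S(\fD^\iin_{\cl^+})$ of \cite{bousseau2019scattering}, whose wall functions repackage the refined Donaldson--Thomas invariants, i.e. the intersection Betti numbers assembled into $\Omega_{d,\chi}^{\PP^2}(y^{\frac{1}{2}})$. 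Both are consistent scattering diagrams valued in the same quantum torus, and the content of the theorem is that, after the substitution $y=e^{i\hbar}$, their wall functions coincide, with the weights $s_{k,\ell_{d,\chi}}/r_{\ell_{d,\chi}}$ accounting for the different indexing of walls by contact points on the Gromov--Witten side and by the slope $\chi$ on the sheaf side.

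First I would establish a higher-genus, quantum refinement of Gabele's tropical correspondence \cite{gabele2019tropical}, in the spirit of the refined tropical vertex of \cite{bousseau2018quantum_tropical}: the $\lambda_g$-insertion together with the normalization by $2\sin(\hbar/2)$ makes the all-genus series $\overline{\Omega}^{\PP^2/E}_{d,k}(\hbar)$ the natural output of a quantum scattering computation whose local wall-crossing operators are quantum tropical vertices governed by the genus variable $\hbar$. This produces $\mathcal{S}_\hbar$ on $B$, with wall functions in $\Q[\![\hbar]\!]$ specializing to the diagram of \cite{gabele2019tropical} at $\hbar=0$. Second, I would match $\mathcal{S}_\hbar$ with $S(\fD^\iin_{\cl^+})$: both diagrams are uniquely determined by their initial walls together with the consistency condition that the path-ordered product around each loop is trivial, so it suffices to check that the initial data agree and that the local scattering rule on the Gromov--Witten side, written in $\hbar$, is carried to the refined scattering rule on the sheaf side, written in $y$, precisely by $y=e^{i\hbar}$. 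The crucial algebraic input is the identity $2\sin(\hbar/2)=\tfrac{1}{i}(y^{\frac{1}{2}}-y^{-\frac{1}{2}})$ under this substitution, so that the quantum dilogarithm governing each elementary scattering on the two sides is literally the same operator.

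With the two diagrams identified, the wall functions can be read off and equated. By the main result of \cite{bousseau2019scattering}, $\Omega_{d,\chi}^{\PP^2}(y^{\frac{1}{2}})$ is exactly the refined contribution of the relevant walls, while the weighted sum $\sum_{\ell_{d,\chi}|k|d}\frac{s_{k,\ell_{d,\chi}}}{r_{\ell_{d,\chi}}}\,\Omega_{d,k}^{\PP^2/E}(\hbar)$ is the same contribution expressed through the Gromov--Witten indexing; the combinatorial factors arise from translating between the one-dimensional torsion datum attached to the slope $\chi$ on the sheaf side, counted by $r_{\ell_{d,\chi}}$, and the two-dimensional torsion datum attached to the contact points $p\in P_d$ on the Gromov--Witten side, counted by $s_{k,\ell_{d,\chi}}$. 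I expect the main obstacle to be the first step, namely the higher-genus quantum tropical correspondence with full control of the $\lambda_g$-contributions and of the contracted components: one must show that the contributions of higher-genus multiple covers assemble into exactly the quantum-tropical-vertex wall functions, matching the structure already present in genus zero in \cite{gabele2019tropical}. Once this quantum lift is secured, the matching of initial data, the substitution $y=e^{i\hbar}$, and the combinatorial bookkeeping of contact points via $s_{k,\ell}$ and $r_\ell$ are comparatively formal. Note that, unlike in the unrefined case, no $\chi$-independence statement is invoked here, so the correspondence holds unconditionally while the refined Takahashi statement remains conjectural.
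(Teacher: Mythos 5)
Your proposal follows essentially the same route as the paper: a higher-genus/quantum lift of Gabele's tropical correspondence (via the decomposition formula, the vanishing properties of $\lambda_g$, and the quantum tropical vertex of \cite{bousseau2018quantum_tropical}) identifies the Gromov--Witten-side diagram with the quantum scattering diagram of \cite{bousseau2019scattering} — note this is $S(\fD^\iin_{y^+})$, not its classical limit $S(\fD^\iin_{\cl^+})$ as written — and the theorem then follows by equating the functions attached to the unbounded vertical rays and unwinding the multicover sums (an induction on $\gcd(d,\chi)$, which is the ``comparatively formal'' bookkeeping you defer). The only substantive caveat is that the heavy lifting you correctly flag as the main obstacle, namely the $\lambda_g$-controlled gluing for non-torically-transverse higher-genus maps, is exactly where the paper leans on \cite{MR3904449}.
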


The proof of Theorem \ref{thm_log_bps_local_bps_refined_intro} is a generalization of the proof of Theorem \ref{thm_log_bps_local_bps_intro}.
By \cite{bousseau2019scattering}, the scattering diagram $S(\fD^\iin_{\cl^+})$
is the classical limit of a quantum scattering diagram\footnote{In  \cite{bousseau2019scattering}, the quantum scattering diagram $S(\fD^\iin_{y^+})$ is denoted $S(\fD^\iin_{q^+})$. More generally, the refined variable is denoted by $q$ in \cite{bousseau2019scattering} and by $y$ in the present paper.}
$S(\fD^\iin_{y^+})$
computing the refined invariants 
$\Omega_{d,\chi}^{\PP^2}(y^{\frac{1}{2}})$.
Therefore, it remains to show that  $S(\fD^\iin_{y^+})$ also computes the higher-genus
Gromov-Witten invariants $N_{g,d}^{\PP^2/E,k}$, that is, we have to prove a higher-genus version of the genus-$0$ tropical correspondence of \cite{gabele2019tropical}.
This will be done following the degeneration argument of \cite{gabele2019tropical}
and using the works \cite{MR3904449, bousseau2018quantum_tropical} which establish the connection between quantum scattering diagrams and higher-genus Gromov-Witten invariants of surfaces with insertion of $\lambda_g$.

We will prove that Theorem \ref{thm_log_bps_local_bps_refined_intro} implies the following result.

\begin{thm}\label{thm_equiv_conj_intro}
Conjectures \ref{conj_tak_higher_genus_bps_intro} and \ref{conj_refined_joyce_conj_intro} are equivalent. Furthermore, assuming that these conjectures hold and denoting by 
$\Omega_d^{\PP^2}(y^{\frac{1}{2}})$
(resp.\ $\Omega_d^{\PP^2/E}(\hbar)$)
the common value of the $\Omega_{d,\chi}^{\PP^2}(y^{\frac{1}{2}})$
(resp.\ $\Omega_{d,k}^{\PP^2/E}(\hbar)$), we have
\[ \Omega_d^{\PP^2/E}(\hbar)
=\frac{1}{3d} \Omega_d^{\PP^2}(y^{\frac{1}{2}}) \]
after the change of variables $y=e^{i\hbar}$.
\end{thm}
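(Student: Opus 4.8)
The plan is to derive both assertions of Theorem~\ref{thm_equiv_conj_intro} from the single correspondence of Theorem~\ref{thm_log_bps_local_bps_refined_intro}, in exactly the way the genus-$0$ statements (Theorems~\ref{thm_takahashi_bps} and~\ref{thm_local_relative_intro}) are extracted from Theorem~\ref{thm_log_bps_local_bps_intro}; the only difference is that the invariants now lie in $\Q[\![\hbar]\!]$ after the substitution $y=e^{i\hbar}$ rather than in $\Q$. Every manipulation below is linear over $\Q$ with coefficients $s_{k,\ell}/r_\ell$, so it commutes with this substitution, which is therefore pure bookkeeping and I will suppress it. The whole argument rests on the combinatorial identity
\begin{equation}\label{eq_comb_identity_plan}
\sum_{\ell|k|d}\frac{s_{k,\ell}}{r_\ell}=3d \qquad\text{for every divisor }\ell\mid d,
\end{equation}
whose crucial feature is that the right-hand side is independent of $\ell$. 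Granting this, Theorem~\ref{thm_log_bps_local_bps_refined_intro} says that $\Omega_{d,\chi}^{\PP^2}(y^{\frac{1}{2}})$ is the weighted sum $\sum_{\ell_{d,\chi}|k|d}(s_{k,\ell_{d,\chi}}/r_{\ell_{d,\chi}})\,\Omega_{d,k}^{\PP^2/E}(\hbar)$, in which the weights depend on $\chi$ only through $\ell_{d,\chi}=d/\gcd(d,\chi)$ and in which every divisor $\ell\mid d$ arises as $\ell_{d,\chi}$ for a suitable $\chi$ (take $\gcd(d,\chi)=d/\ell$).

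For the implication Conjecture~\ref{conj_tak_higher_genus_bps_intro}$\Rightarrow$Conjecture~\ref{conj_refined_joyce_conj_intro}, I would assume $\Omega_{d,k}^{\PP^2/E}(\hbar)$ is independent of $k$, write $\Omega_d^{\PP^2/E}(\hbar)$ for its common value, pull it out of the sum, and apply~\eqref{eq_comb_identity_plan} to get $\Omega_{d,\chi}^{\PP^2}(y^{\frac{1}{2}})=3d\,\Omega_d^{\PP^2/E}(\hbar)$. The right-hand side is manifestly independent of $\chi$, which is Conjecture~\ref{conj_refined_joyce_conj_intro}, and rearranging the same equality yields the stated formula $\Omega_d^{\PP^2/E}(\hbar)=\frac{1}{3d}\Omega_d^{\PP^2}(y^{\frac{1}{2}})$. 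For the converse I would assume Conjecture~\ref{conj_refined_joyce_conj_intro}, so that the left-hand side equals a single series $\Omega_d^{\PP^2}(y^{\frac{1}{2}})$ for all $\chi$. Letting $\ell$ range over the divisors of $d$ produces a square linear system in which the divisors of $d$ index both the equations (via $\ell=\ell_{d,\chi}$) and the unknowns $\Omega_{d,k}^{\PP^2/E}(\hbar)$. Its coefficient matrix $A_{\ell k}=s_{k,\ell}/r_\ell$ vanishes unless $\ell\mid k$, because $d(a)\mid d((a,b))$ forces $s_{k,\ell}=0$ when $\ell\nmid k$, and has nonzero diagonal entries $A_{kk}=s_{k,k}/r_k>0$; it is therefore triangular for the divisibility order, hence invertible, so the system has a unique solution. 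Since the constant vector $\Omega_{d,k}^{\PP^2/E}(\hbar)=\frac{1}{3d}\Omega_d^{\PP^2}(y^{\frac{1}{2}})$ solves it by~\eqref{eq_comb_identity_plan}, uniqueness forces $\Omega_{d,k}^{\PP^2/E}(\hbar)$ to be independent of $k$, giving Conjecture~\ref{conj_tak_higher_genus_bps_intro} together with the formula.

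The remaining work, and where I expect the main obstacle, is the combinatorial identity~\eqref{eq_comb_identity_plan}. I would prove it by elementary counting in the relevant torsion groups, writing $d(x)$ for the least $m\geq 1$ with $3mx=0$ (the convention fixed by the defining equation $(3d(x))x=0$, under which $d(0)=1$, matching the full $(3d)$-torsion $P_d\cong(\Z/3d)^2$). Then $d((a,b))=\lcm(d(a),d(b))$, and for every $j\mid k$ one has $\#\{c\in\Z/(3k):d(c)=j\}=3\varphi(j)$, where $\varphi$ is Euler's totient; in particular the number of admissible first coordinates $a$, those with $d(a)=\ell$, equals $r_\ell=3\varphi(\ell)$. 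Hence $s_{k,\ell}/r_\ell$ is the number of $b\in\Z/(3k)$ with $\lcm(\ell,d(b))=k$. Grouping these $b$ by the value $j=d(b)$ and summing over $k$ with $\ell\mid k\mid d$ turns the left-hand side of~\eqref{eq_comb_identity_plan} into $3\sum_{j}\varphi(j)\,\#\{k:\ell\mid k\mid d,\ \lcm(\ell,j)=k\}$; since $\ell\mid d$, the inner count is $1$ exactly when $j\mid d$ and $0$ otherwise, so the identity follows from $\sum_{j\mid d}\varphi(j)=d$.

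The conceptual content is that this counting is precisely the combinatorial shadow of the distribution of the points $p\in P_d$ by the invariant $d(p)$, matched on the sheaf side against the $\ell_{d,\chi}$-graded structure of $M_{d,\chi}$ coming from the action of torsion line bundles. The part requiring the most care is keeping this matching strictly compatible with the change of variables $y=e^{i\hbar}$ throughout, so that the linear-algebra inversion above is carried out consistently in $\Q[\![\hbar]\!]$; once that is checked, the identity~\eqref{eq_comb_identity_plan} makes both the equivalence and the closed formula fall out simultaneously.
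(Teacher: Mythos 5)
Your proposal is correct and follows essentially the same route as the paper: both directions are extracted from Theorem \ref{thm_log_bps_local_bps_refined_intro} via the identity $\sum_{\ell_{d,\chi}|k|d} s_{k,\ell_{d,\chi}}/r_{\ell_{d,\chi}}=3d$, with the converse handled by observing that the system is triangular for divisibility (the paper phrases this as the relations giving a ``recursive way'' to determine the $\Omega_{d,k}^{\PP^2/E}(\hbar)$, which is the same uniqueness argument). The only cosmetic difference is your proof of the combinatorial identity via $r_\ell=3\varphi(\ell)$ and $\sum_{j\mid d}\varphi(j)=d$, where the paper instead double-counts directly: $\sum_{k} s_{k,\ell}$ is the number of pairs $(a,b)\in(\Z/(3d))^2$ with $d(a)=\ell$, namely $r_\ell\cdot 3d$; both computations are valid.
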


We reformulate the result of Theorem \ref{thm_log_bps_local_bps_refined_intro}
after summation over all the possible values of $\chi$ on the sheaf side, that is, over all possible contact points on the Gromov-Witten side.
Tensoring by $\cO(1)$ gives an isomorphism between 
$M_{d,\chi}$ and $M_{d,\chi+d}$. Thus, $\Omega_{d,\chi}(y^{\frac{1}{2}})$ can only depend on $\chi$
through $\chi\!\!\!\!\mod d$. 
Define 
\[ \Omega_d^{\PP^2}(y^{\frac{1}{2}})
\coloneq \frac{1}{d} \sum_{\chi\!\!\!\!\mod d} \Omega_{d,\chi}^{\PP^2}(y^{\frac{1}{2}})\,\]
by averaging over the $d$ possible values of 
$\chi\!\!\!\mod d$.
If Conjecture \ref{conj_refined_joyce_conj_intro} is true, then $\Omega_{d,\chi}^{\PP^2}(y^{\frac{1}{2}})$ is in fact independent of $\chi$, and so $\Omega_d^{\PP^2}(y^{\frac{1}{2}})$ is the common value of the $\Omega_{d,\chi}^{\PP^2}(y^{\frac{1}{2}})$.
We define 
\[ \bar{F}^{NS}(y^{\frac{1}{2}},Q) 
\coloneq i \sum_{d \in \Z_{>0}}
\sum_{\ell \in \Z_{>0}} \frac{1}{\ell} \frac{\Omega_d^{\PP^2}(y^{\frac{\ell}{2}})}{
y^{\frac{\ell}{2}}-y^{-\frac{\ell}{2}}} Q^{\ell d} \in \Q(y^{\pm \frac{1}{2}})[\![Q]\!]\,.\]
Using the change of variables $y=e^{i \hbar}$, we define series $\bar{F}_g^{NS}(Q) \in \Q[\![Q]\!]$ by the expansion
\[ \bar{F}^{NS}(y^{\frac{1}{2}},Q) =\sum_{g \in \Z_{\geqslant 0}} \bar{F}_g^{NS} (Q) 
(-1)^g \hbar^{2g-1}\,.\]
On the other hand, for every $g \in \Z_{\geqslant 0}$, we define
\[ \bar{F}_g^{\PP^2/E} (Q) 
\coloneq \sum_{d \in \Z_{>0}}
\frac{(-1)^{g+d-1}}{3d} N_{g,d}^{\PP^2/E} Q^d \in \Q[\![Q]\!]\,.\]
Then, Theorem \ref{thm_log_bps_local_bps_refined_intro}
implies the following result.

\begin{thm} \label{thm_NS_intro}
For every $g \in \Z_{\geqslant 0}$, we have 
\[  \bar{F}^{NS}_g(Q) =\bar{F}_g^{\PP^2/E} (Q) \,.\]
\end{thm}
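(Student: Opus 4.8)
The plan is to deduce Theorem~\ref{thm_NS_intro} from Theorem~\ref{thm_log_bps_local_bps_refined_intro} by summing over the contact points (equivalently, averaging over $\chi$) and repackaging everything into generating series. I would introduce the relative free energy $\bar{F}^{\PP^2/E}(\hbar,Q) \coloneqq \sum_{g\geqslant 0}\bar{F}_g^{\PP^2/E}(Q)(-1)^g\hbar^{2g-1}$, so that, by the defining expansion of the $\bar{F}_g^{NS}$, the assertion $\bar{F}_g^{NS}=\bar{F}_g^{\PP^2/E}$ for all $g$ is equivalent to the single identity $\bar{F}^{NS}(y^{1/2},Q)=\bar{F}^{\PP^2/E}(\hbar,Q)$ after $y=e^{i\hbar}$. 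It is this identity of series in $\Q(y^{\pm1/2})[\![Q]\!]$ that I would prove; extracting the coefficient of $(-1)^g\hbar^{2g-1}$ then gives the theorem.

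First I would compute the averaged refined invariant. Write $a_k$ for the number of $p\in P_d$ with $d(p)=k$; this depends only on $k$ and equals $\sum_{\ell\mid k}s_{k,\ell}$, with $\sum_{k\mid d}a_k=(3d)^2$. Inserting Theorem~\ref{thm_log_bps_local_bps_refined_intro} into $\Omega_d^{\PP^2}(y^{1/2})=\tfrac1d\sum_{\chi\bmod d}\Omega_{d,\chi}^{\PP^2}(y^{1/2})$ and grouping the residues $\chi$ by the value $\ell'=\ell_{d,\chi}$, the computation rests on two elementary torsion counts: the number of $\chi\bmod d$ with $\ell_{d,\chi}=\ell'$ equals $\varphi(\ell')$, and $r_{\ell'}=3\varphi(\ell')$. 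The latter holds because, for $x\in\Z/(3\ell')$, one has $d(x)=\ell'/\gcd(\ell',x)$, so $d(x)=\ell'$ exactly when $\gcd(\ell',x)=1$, and each residue modulo $\ell'$ coprime to $\ell'$ lifts to three elements of $\Z/(3\ell')$. Swapping the order of summation and using these counts collapses the coefficient attached to each $k$ to $\sum_{\ell'\mid k}\varphi(\ell')\,s_{k,\ell'}/r_{\ell'}=\tfrac13\sum_{\ell'\mid k}s_{k,\ell'}=\tfrac13 a_k$, which yields the clean, and unconditional, relation $\Omega_d^{\PP^2}(y^{1/2})=\tfrac{1}{3d}\sum_{k\mid d}a_k\,\Omega_{d,k}^{\PP^2/E}(\hbar)$. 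As a consistency check, this recovers $\Omega_d^{\PP^2}=3d\,\Omega_d^{\PP^2/E}$ of Theorem~\ref{thm_equiv_conj_intro} once the $\chi$- and $k$-independence conjectures are imposed.

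Next I would feed this into the two sides. On the sheaf side, $y=e^{i\hbar}$ gives $y^{\ell/2}-y^{-\ell/2}=2i\sin(\ell\hbar/2)$, while $\Omega_d^{\PP^2}(y^{\ell/2})$ is obtained from $\Omega_d^{\PP^2}(y^{1/2})$ by $\hbar\mapsto\ell\hbar$; hence $\Omega_d^{\PP^2}(y^{\ell/2})=\tfrac{1}{3d}\sum_{k\mid d}a_k\,\Omega_{d,k}^{\PP^2/E}(\ell\hbar)$, and $\bar{F}^{NS}$ becomes a double sum over a degree and the multiple-cover index $\ell$ with denominators $\sin(\ell\hbar/2)$. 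On the Gromov--Witten side, I would decompose $N_{g,d}^{\PP^2/E}=\sum_{k\mid d}a_k\,N_{g,d}^{\PP^2/E,k}$, use the definition of $\overline{\Omega}_{d,k}^{\PP^2/E}(\hbar)$ to trade $\sum_g N_{g,d}^{\PP^2/E,k}\hbar^{2g-1}$ for $\overline{\Omega}_{d,k}^{\PP^2/E}(\hbar)$, and then expand $\overline{\Omega}_{d,k}^{\PP^2/E}(\hbar)$ in the higher-genus relative BPS invariants $\Omega_{d',k}^{\PP^2/E}$ via the multiple-cover relation defining them. Comparing the coefficient of each $Q^D$ reduces the desired equality to the same combinatorial data ($a_k$, $\varphi$, $r_\ell$, and $\sum_{\ell\mid d}\varphi(\ell)=d$) already assembled above.

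The genuinely new input is Theorem~\ref{thm_log_bps_local_bps_refined_intro}, which I would use as a black box; the elementary torsion counts $r_\ell=3\varphi(\ell)$ and $\#\{\chi:\ell_{d,\chi}=\ell\}=\varphi(\ell)$ are the technical heart. I expect the main obstacle to be bookkeeping rather than conceptual: one must align the Gopakumar--Vafa multiple-cover sum built into the definition of $\bar{F}^{NS}$ (the sum over $\ell$ with its $1/(y^{\ell/2}-y^{-\ell/2})$ factor) with the multiple-cover sum $d'\mapsto d$ implicit in the higher-genus BPS expansion of $N_{g,d}^{\PP^2/E}$, and verify that the degree normalisations $1/(3d)$ and the $\sin$-factors conspire so that the two resummations agree term by term after $y=e^{i\hbar}$. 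Once this alignment is established, reading off the coefficient of $(-1)^g\hbar^{2g-1}$ on both sides gives $\bar{F}_g^{NS}=\bar{F}_g^{\PP^2/E}$ for every $g$.
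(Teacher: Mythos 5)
Your proposal is correct and follows essentially the same route as the paper: average Theorem~\ref{thm_log_bps_local_bps_refined_intro} over $\chi \bmod d$ to get $\Omega_d^{\PP^2}(y^{\frac{1}{2}})=\frac{1}{3d}\sum_{k|d}a_{d,k}\,\Omega_{d,k}^{\PP^2/E}(\hbar)$ with $a_{d,k}$ the number of $p\in P_d$ with $d(p)=k$, then recognise the $\ell$-sum in $\bar{F}^{NS}$ as the multiple-cover resummation defining $\Omega_{d,k}^{\PP^2/E}(\hbar)$ from the $N_{g,d}^{\PP^2/E,k}$, and finally use $N_{g,d}^{\PP^2/E}=\sum_{k|d}a_{d,k}N_{g,d}^{\PP^2/E,k}$. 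Your explicit torsion counts ($r_\ell=3\varphi(\ell)$ and $\#\{\chi \bmod d:\ell_{d,\chi}=\ell\}=\varphi(\ell)$) are a correct, slightly more detailed justification of the combinatorial identity the paper simply asserts.
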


Assuming Conjecture \ref{conj_refined_joyce_conj_intro}, 
Theorem \ref{thm_NS_intro} was previously conjectured in 
\cite[\S 8.6]{bousseau2018quantum_tropical}.

The series $\bar{F}^{NS}(y^{\frac{1}{2}},Q)$ can be viewed as a mathematically precise definition of the Nekrasov-Shatashvili limit of the refined 
topological string free energy of local $\PP^2$
\cite{MR3024275}. As such, the series $\bar{F}^{NS}_g(Q)$ are expected to be expressible in terms of quasimodular forms and to satisfy a version of the holomorphic anomaly equation. Proving such results starting from the definition of  $\bar{F}^{NS}(y^{\frac{1}{2}})$ in terms 
of Betti numbers of moduli of one-dimensional sheaves seems hopeless without a geometric understanding of the change of variables $y=e^{i\hbar}$ involved in the definition of the series $\bar{F}^{NS}_g(Q)$. Theorem 
\ref{thm_NS_intro} gives such geometric interpretation by identifying 
$\bar{F}^{NS}_g(Q)$ with a genus $g$ Gromov-Witten generating series.
The conjectures of \cite{MR3024275} about the series $\bar{F}_g^{NS}(Q)$ will be proved in \cite{HAE}
by working with the series $\bar{F}_g^{\PP^2/E}(Q)$ on the Gromov-Witten side, and then using Theorem \ref{thm_NS_intro} to conclude.

\subsection{Applications to moduli spaces of one-dimensional sheaves on $\PP^2$}

Theorems \ref{thm_log_bps_local_bps_intro} and 
\ref{thm_log_bps_local_bps_refined_intro} establish a  correspondence between relative Gromov-Witten theory of moduli spaces of $(\PP^2,E)$, and moduli spaces of one-dimensional Gieseker semistable sheaves on $\PP^2$.
Theorem \ref{thm_takahashi_precise} is a non-trivial application of this correspondence to the Gromov-Witten side, proved using knowledge on the sheaf side. 
In this section we collect results obtained using the correspondence the other way around, that is, using knowledge on the Gromov-Witten side to deduce non-trivial results on the sheaf side. 

\subsubsection{Divisibility of the topological Euler characteristics}
In order to state Theorem \ref{thm_divisibility_intro}
in the most elementary way, we focus on the smooth moduli spaces $M_{d,1}$ of one-dimensional Gieseker semistable sheaves 
of degree $d$ and with holomorphic Euler characteristic one, and we consider its topological Euler characteristic 
$e(M_{d,1}) \in \Z$. According to Theorem \ref{thm_joyce_conj_intro}, we have $e(M_{d,1})=(-1)^{d-1}
\Omega_d^{\PP^2}$. Thus, by Theorem \ref{thm_local_relative_intro}, we have 
$\frac{(-1)^{d-1}}{3d} \Omega_d^{\PP^2}= \Omega_{g,k}^{\PP^2/E}$ for every divisor 
$k$ of $d$. As reviewed after the statement of 
Theorem \ref{thm_takahashi_precise},
the primitive invariants $N_{0,d}^{\PP^2/E,d}=(-1)^{d-1}\Omega_{d,d}^{\PP^2/E}$
are positive integers. Therefore, we obtain the following result.

\begin{thm} \label{thm_divisibility_intro}
For every positive integer $d$, the topological Euler characteristic
$e(M_{d,1})$ is divisible by $3d$. 
\end{thm}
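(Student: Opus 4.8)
The plan is to assemble Theorem \ref{thm_divisibility_intro} as a short chain of identities drawn from the results already established, the only subtleties being the sign bookkeeping and the passage from intersection cohomology to ordinary cohomology. First I would observe that since $\gcd(d,1)=1$, the moduli space $M_{d,1}$ is smooth, so its intersection cohomology agrees with ordinary cohomology and $Ie(M_{d,1})=e(M_{d,1})$. Recalling that $\dim M_{d,1}=d^2+1$ and that $d^2+1\equiv d-1\pmod 2$, the defining formula $\Omega_{d,\chi}^{\PP^2}=(-1)^{\dim M_{d,\chi}}Ie(M_{d,\chi})$ specializes to
\[ \Omega_{d,1}^{\PP^2}=(-1)^{d-1}e(M_{d,1}), \qquad\text{equivalently}\qquad e(M_{d,1})=(-1)^{d-1}\Omega_{d,1}^{\PP^2}. \]

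Next I would invoke Theorem \ref{thm_joyce_conj_intro} to replace $\Omega_{d,1}^{\PP^2}$ by the common value $\Omega_d^{\PP^2}$, and then apply Theorem \ref{thm_local_relative_intro} with $k=d$ to obtain $\Omega_d^{\PP^2}=3d\,\Omega_{d,d}^{\PP^2/E}$. Finally, specializing the defining relation (\ref{equation_bps}) to $k=d$—where the divisibility condition $k\mid d'\mid d$ forces $d'=d$, collapsing the sum to a single term—gives $\Omega_{d,d}^{\PP^2/E}=\overline{\Omega}_{d,d}^{\PP^2/E}=(-1)^{d-1}N_{0,d}^{\PP^2/E,d}$. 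Combining these identities yields
\[ e(M_{d,1})=(-1)^{d-1}\cdot 3d\cdot(-1)^{d-1}N_{0,d}^{\PP^2/E,d}=3d\,N_{0,d}^{\PP^2/E,d}. \]
Since the primitive invariant $N_{0,d}^{\PP^2/E,d}$ is a positive integer by point (1) of \cite[Proposition 4.20]{choi2018log} (as recalled after Theorem \ref{thm_takahashi_precise}), the right-hand side is visibly divisible by $3d$, which is the assertion.

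The genuine content here lies entirely in the inputs rather than in the deduction: the $\chi$-independence of Theorem \ref{thm_joyce_conj_intro}, the local-relative identity of Theorem \ref{thm_local_relative_intro} (itself the culmination of the sheaf/Gromov--Witten correspondence), and the positivity and integrality of $N_{0,d}^{\PP^2/E,d}$. Once these are granted, the only steps requiring care are the smoothness reduction $Ie(M_{d,1})=e(M_{d,1})$ and the verification that the two factors of $(-1)^{d-1}$ cancel; I anticipate no further obstacle, the divisibility being an immediate arithmetic consequence of writing $e(M_{d,1})$ as $3d$ times a positive integer.
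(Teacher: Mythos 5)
Your proposal is correct and follows essentially the same chain of identities as the paper's own argument (given in the paragraph preceding the theorem): smoothness of $M_{d,1}$ to identify $Ie$ with $e$, Theorem \ref{thm_joyce_conj_intro} for $\chi$-independence, Theorem \ref{thm_local_relative_intro} to extract the factor $3d$, and the integrality and positivity of the primitive invariant $N_{0,d}^{\PP^2/E,d}$. Your sign bookkeeping via $\dim M_{d,1}=d^2+1\equiv d-1\pmod 2$ and the collapse of the sum in (\ref{equation_bps}) for $k=d$ are both accurate.
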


The divisibility given by Theorem \ref{thm_divisibility_intro} was previously experimentally observed for small values of $d$, and conjectured to hold in general in \cite{choi2018local}. 
It can be viewed as a quite elementary looking statement on the topology of a classical family of algebraic varieties \cite{MR1263210}. However, our proof is not so elementary, relying in an essential way on some back and forth interaction with Gromov-Witten theory. We don't know a different proof.
As we will review in Proposition \ref{prop_katz_conj}, $(-1)^{d-1}e(M_{d,1})$ is equal to the genus-$0$ Gopakumar-Vafa invariant $n_{0,d}^{K_{\PP^2}}$ defined by genus-$0$ Gromov-Witten theory of local $\PP^2$. In particular, these invariants can be extracted from solutions to the mirror Picard-Fuchs 
differential equation. To give a direct proof of Theorem  
\ref{thm_divisibility_intro} starting from this definition
seems to be a non-obvious number-theoretic question\footnote{In fact, even the integrality of $n_{0,d}^{K_{\PP^2}}$ is unclear from the point of view of the Picard-Fuchs equation.}.

A more general divisibility statement, at the level of Poincar\'e polynomials, has been observed for small values of $d$, and conjectured to hold in general in \cite[Conjecture 4.15]{choi2018local}: for every $d \in \Z_{>0}$, the Poincar\'e polynomial $P_{M_{d,1}}(y) \coloneq \sum_{j=0}^{\dim M_{d,1}} b_{2j}(M_{d,1}) y^j$ of $M_{d,1}$ should be divisible by 
$[3d]_y \coloneq \sum_{j=0}^{3d-1}y^j$. For $y=1$, this conjecture specializes to Theorem \ref{thm_divisibility_intro}. Our proof of Theorem 
\ref{thm_divisibility_intro} relies on connecting $e(M_{d,1})$ with some question in 
genus-$0$ Gromov-Witten theory and the integrality comes from the essentially enumerative nature of the Gromov-Witten 
invariants. Theorem
\ref{thm_log_bps_local_bps_refined_intro}
gives a connection of $P_{M_{d,1}}(y)$
to Gromov-Witten theory, but to an all genus question, far from being enumerative. 
Assuming Conjectures \ref{conj_refined_joyce_conj_intro}-\ref{conj_tak_higher_genus_bps_intro}, one should have 
\[ \Omega_d^{\PP^2/E}(\hbar)=
\frac{1}{3d} \Omega_d^{\PP^2}(y^{\frac{1}{2}}) \]
by Theorem \ref{thm_equiv_conj_intro}. Nevertheless, 
the expected conjectural multicovering structure of higher-genus open Gromov-Witten invariants in Calabi-Yau 3-folds\footnote{The invariants $N_{g,d}^{\PP^2/E}$ should be viewed as providing an algebro-geometric definition of open higher genus Gromov-Witten invariants of the noncompact Calabi-Yau 3-fold $(\PP^2-E) \times \A^1$.}, see \cite[formula 3.31]{MR1806596}, predicts that $\Omega_{d}^{\PP^2/E}(\hbar)$ should be the product of $\frac{[3d]_y}{3d}$ by a Laurent polynomial in $y^{\frac{1}{2}}$ with integer coefficients. Therefore, the Laurent polynomial
$\Omega_{d}(y^{\frac{1}{2}})$ should be divisible by $[3d]_y$, which is exactly  \cite[Conjecture 4.15]{choi2018local}. Previous evidence for \cite[Conjecture 4.15]{choi2018local} was limited to explicit checks in low degree. Theorems \ref{thm_log_bps_local_bps_refined_intro}-
\ref{thm_equiv_conj_intro} relate this conjecture to
the expected general multicovering structure of higher-genus Gromov-Witten theory.

\subsubsection{Towards $\chi$-independence of the Betti numbers}

According to Conjecture \ref{conj_refined_joyce_conj_intro}, the intersection Betti numbers $M_{d,\chi}$ of the moduli 
spaces $M_{d,\chi}$ should be independent of $\chi$.
Tensoring by $\cO(1)$ and Serre duality imply that the moduli spaces $M_{d,\chi}$ and 
$M_{d,\chi'}$ are isomorphic if $\chi=\pm \chi' \mod d$. 
However, if $d \geqslant 3$ and $\chi \neq \pm \chi' \mod d$, then the algebraic varieties 
$M_{d,\chi}$ and $M_{d,\chi'}$ 
are not isomorphic by \cite[Theorem 8.1]{woolf2013nef}, which is why Conjecture \ref{conj_refined_joyce_conj_intro} is not obvious. We do not know how to prove Conjecture 
\ref{conj_refined_joyce_conj_intro} but we remark that the connection with Gromov-Witten theory given by Theorem 
\ref{thm_log_bps_local_bps_refined_intro} provides a non-trivial result. Indeed, the Gromov-Witten side of 
Theorem \ref{thm_log_bps_local_bps_refined_intro}
only depends on $\chi$ through $\gcd(d,\chi)$, and so
we obtain the following special case of Conjecture 
\ref{conj_refined_joyce_conj_intro}.

\begin{thm} \label{thm_chi_independence_intro}
The intersection Betti numbers $Ib_j(M_{d,\chi})$ depend on 
$\chi$ only through $\gcd(d,\chi)$, that is, for every $d\in \Z_{>0}$ and $\chi,\chi' \in \Z$ such that 
$\gcd(d,\chi)=\gcd(d,\chi')$, we have 
\[ Ib_j(M_{d,\chi})=Ib_j(M_{d,\chi'})\,.\]
\end{thm}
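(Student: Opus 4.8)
The plan is to obtain Theorem~\ref{thm_chi_independence_intro} as a formal consequence of Theorem~\ref{thm_log_bps_local_bps_refined_intro}, exploiting the fact that its right-hand side depends on $\chi$ only through $\ell_{d,\chi}=d/\gcd(d,\chi)$. First I would apply Theorem~\ref{thm_log_bps_local_bps_refined_intro} to both $\chi$ and $\chi'$. Since $\gcd(d,\chi)=\gcd(d,\chi')$ by hypothesis, we have $\ell_{d,\chi}=\ell_{d,\chi'}=:\ell$, and the right-hand side
\[ \sum_{\ell|k|d}\frac{s_{k,\ell}}{r_\ell}\,\Omega_{d,k}^{\PP^2/E}(\hbar) \]
involves only $\ell$, the combinatorial constants $s_{k,\ell}$ and $r_\ell$, and the Gromov--Witten quantities $\Omega_{d,k}^{\PP^2/E}(\hbar)$, all of which are the same for $\chi$ and $\chi'$. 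Hence the two right-hand sides coincide, yielding
\[ \Omega_{d,\chi}^{\PP^2}(e^{i\hbar/2})=\Omega_{d,\chi'}^{\PP^2}(e^{i\hbar/2}) \]
in $\Q[\![\hbar]\!]$ after the change of variables $y=e^{i\hbar}$.

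Next I would promote this to an equality of Laurent polynomials in $y^{\frac{1}{2}}$. The substitution $y^{\frac{1}{2}}\mapsto e^{i\hbar/2}$ is injective on $\Q[y^{\pm\frac{1}{2}}]$: the images $e^{in\hbar/2}$ of the distinct monomials $(y^{\frac{1}{2}})^n$ are linearly independent over $\C$ inside $\C[\![\hbar]\!]$, as one sees from the Vandermonde structure of their Taylor coefficients (equivalently, a nonzero finite exponential sum $\sum_n a_n e^{in\hbar/2}$ cannot vanish identically). Therefore $\Omega_{d,\chi}^{\PP^2}(y^{\frac{1}{2}})=\Omega_{d,\chi'}^{\PP^2}(y^{\frac{1}{2}})$ as elements of $\Z[y^{\pm\frac{1}{2}}]$.

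Finally I would read off the Betti numbers from the definition
\[ \Omega_{d,\chi}^{\PP^2}(y^{\frac{1}{2}})=(-y^{\frac{1}{2}})^{-\dim M_{d,\chi}}\sum_{j=0}^{\dim M_{d,\chi}}Ib_{2j}(M_{d,\chi})\,y^{j}. \]
Since $\dim M_{d,\chi}=d^2+1$ is independent of $\chi$, the prefactor $(-y^{\frac{1}{2}})^{-(d^2+1)}$ is common to both sides, so cancelling it and comparing coefficients of $y^j$ gives $Ib_{2j}(M_{d,\chi})=Ib_{2j}(M_{d,\chi'})$ for every $j$. As the odd intersection Betti numbers vanish, this yields $Ib_j(M_{d,\chi})=Ib_j(M_{d,\chi'})$ for all $j$.

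The substantive work is entirely contained in Theorem~\ref{thm_log_bps_local_bps_refined_intro}, which we take as given; relative to it the present statement is a formal corollary, reflecting that the Gromov--Witten side of the correspondence is visibly coarser in $\chi$ than the sheaf side. Accordingly there is no real obstacle in this deduction: the only points requiring (routine) justification are the injectivity of the change of variables $y=e^{i\hbar}$ and the $\chi$-independence of $\dim M_{d,\chi}$.
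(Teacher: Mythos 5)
Your proposal is correct and follows exactly the paper's own argument: the theorem is deduced from Theorem \ref{thm_log_bps_local_bps_refined_intro} by observing that its Gromov--Witten side depends on $\chi$ only through $\ell_{d,\chi}=d/\gcd(d,\chi)$. The paper states this in one sentence; your added justifications (injectivity of $y=e^{i\hbar}$ on Laurent polynomials, $\chi$-independence of $\dim M_{d,\chi}=d^2+1$) are correct routine points that the paper leaves implicit.
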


In order to better appreciate Theorem \ref{thm_chi_independence_intro}, we mention a closely related result. 
Let $C$ be a smooth projective curve of genus $g$ and let $M_{d,\chi}^{H}(C)$ be the moduli space of semistable Higgs bundles on $C$ of rank $d$ and degree $d(g-1)+\chi$ \cite{MR887284}. 
Through the spectral cover construction, one can view $M_{d,\chi}^{H}(C)$ as a moduli space of one-dimensional Gieseker semistable sheaves on the total space $K_C$ of the canonical line bundle of $C$, and so $M_{d,\chi}^{H}(C)$ is a close cousin of $M_{d,\chi}$.

\begin{thm}[\cite{MR2453601}\footnote{\cite{MR2453601}
only treats the case $\gcd(d,\chi)=1$. We remark that the same argument works in general.}]\label{thm_higgs_moduli}
The intersection Betti numbers $Ib_j(M_{d,\chi}^H(C))$ depend on 
$\chi$ only through $\gcd(d,\chi)$, that is, for every $d\in \Z_{>0}$ and $\chi,\chi' \in \Z$ such that 
$\gcd(d,\chi)=\gcd(d,\chi')$, we have 
\[ Ib_j(M_{d,\chi}^H(C))=Ib_j(M_{d,\chi'}^H(C))\,.\]
\end{thm}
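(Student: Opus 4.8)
The plan is to analyze $M_{d,\chi}^H(C)$ through the Hitchin fibration and the spectral correspondence, reducing the $\chi$-dependence of the intersection Betti numbers to a question about the relative compactified Jacobian over the Hitchin base. Write $h \colon M_{d,\chi}^H(C) \to A$ for the Hitchin map, where $A = \bigoplus_{i=1}^{d} H^0(C, K_C^{\otimes i})$ and $K_C$ is the canonical bundle of $C$; for $a \in A$, the spectral curve $\overline{C}_a$ sits inside the total space of $K_C$, and the Beauville--Narasimhan--Ramanan correspondence identifies the fibre $h^{-1}(a)$ with a moduli space of rank-one torsion-free sheaves on $\overline{C}_a$ of a degree $e$ determined by $\chi$. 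When $\overline{C}_a$ is integral, this fibre is the compactified Jacobian $\overline{\Pic}^{\,e}(\overline{C}_a)$, a torsor under $\Pic^0(\overline{C}_a)$; its topology is therefore independent of $e$, hence of $\chi$. The whole strategy is to upgrade this fibrewise statement to the intersection cohomology of the total space.

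The steps, in order, are as follows. First I would reduce to $0 \leqslant \chi < d$ using the isomorphism $M_{d,\chi}^H(C) \cong M_{d,\chi+d}^H(C)$ given by tensoring the underlying bundle with a fixed line bundle of degree one, so that $\gcd(d,\chi)$ ranges over the divisors of $d$. Next I would isolate the open locus $A^{\circ} \subseteq A$ of integral spectral curves, over which $h$ is a torsor under the relative Jacobian $\mathcal{P} \to A^{\circ}$, and record that $\mathcal{P}$, together with the cohomology of its fibres and their monodromy, is manifestly independent of $\chi$. Then I would apply the decomposition theorem to $Rh_{*} IC_{M_{d,\chi}^H(C)}$, writing the intersection cohomology as a sum of contributions $IC_{\overline{Z}}(\mathcal{L})$ indexed by supports $Z \subseteq A$ and local systems $\mathcal{L}$ on the smooth locus of $Z$. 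Finally I would invoke Ng\^{o}'s support theorem to determine the supports and local systems that actually occur: in the coprime case $\gcd(d,\chi)=1$ only the full support $Z = A$ appears, carrying the $\chi$-independent local system assembled from the cohomology of the fibres of $\mathcal{P}$, so that $IH^{*}(M_{d,\chi}^H(C))$ is visibly the same for all such $\chi$. This recovers the coprime case established in \cite{MR2453601}.

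The main obstacle is the passage to $\gcd(d,\chi) = h > 1$, where $M_{d,\chi}^H(C)$ is singular and the decomposition of $Rh_{*} IC$ acquires extra ``endoscopic'' supports $Z \subsetneq A$ coming from the $h$-torsion subscheme $\mathcal{P}[h]$ of the relative Jacobian. The content of the theorem in this range is that these additional supports, and the local systems they carry, depend on $\chi$ only through $h$: they are built functorially out of $\mathcal{P}[h]$ and out of the way the degree-$e$ torsor meets the components indexed by $\pi_0$ of the symmetry group scheme, both of which are governed by $\gcd(d,\chi)$ alone. Making this matching precise --- that the entire perverse filtration, and not merely the generic stalk, is a function of $\gcd(d,\chi)$ --- is exactly where intersection cohomology (rather than ordinary cohomology) and the full strength of the support theorem are needed, and is the step that justifies the remark that the coprime argument of \cite{MR2453601} extends to all $\chi$.
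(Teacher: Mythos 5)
Your proposal hits a genuine gap exactly where the theorem has content beyond the coprime case of \cite{MR2453601}. Over the locus of integral spectral curves the Hitchin fibres are torsors under the relative Jacobian and their cohomology is indeed independent of the degree $e$, but for $\gcd(d,\chi)=h>1$ everything hinges on what happens over the locus of reducible or non-reduced spectral curves, where the moduli of semistable rank-one torsion-free sheaves of degree $e$ and of degree $e'$ are genuinely different spaces (different S-equivalence classes, different singularities, different strata). Your final step asserts that the extra supports and the local systems they carry ``are built functorially out of $\mathcal{P}[h]$'' and ``are governed by $\gcd(d,\chi)$ alone'', but no argument is given, and this is precisely the hard part: controlling the full perverse decomposition of $Rh_{*}IC$ in the non-coprime case is a deep problem (it is the substance of the much later proofs of full $\chi$-independence), and even the coprime full-support statement for the $K_C$-twisted Hitchin map is a theorem of Chaudouard--Laumon and de Cataldo rather than a routine application of Ng\^o's support theorem. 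As written, the proposal reduces the statement to an unproved claim of at least the same order of difficulty.

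The paper's own proof is entirely different and much shorter, and you should compare against it: by non-abelian Hodge theory $M_{d,\chi}^H(C)$ is homeomorphic to the character variety of a once-punctured surface with diagonal monodromy $e^{2i\pi\chi/d}$ around the puncture; when $\gcd(d,\chi)=\gcd(d,\chi')$ the two character varieties are Galois conjugates of a variety defined over $\Q(\zeta)$ (the monodromy eigenvalues are primitive roots of unity of the same order), and intersection Betti numbers are both purely algebraic invariants, via their \'etale description \cite{MR751966}, and topological invariants \cite{MR696691}, so they coincide. This route avoids the Hitchin fibration and the support theorem entirely, which is why the paper can dispose of the general case in one paragraph; if you want to salvage your approach, the missing step is a precise $\chi$-to-$\chi'$ comparison of the summands supported on the non-integral locus, and nothing in the spectral-curve picture hands you that for free.
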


By non-abelian Hodge theory,
$M_{d,\chi}^H(C)$ is homeomorphic to the character variety $M_{d,\chi}^{C_g}$ of $GL_d$ local systems on a once-punctured genus $g$ compact topological surface, with diagonal monodromy $e^{\frac{2i\pi \chi}{d}}$ around the puncture.
For $\chi$ and $\chi'$ such that $\gcd(d,\chi)=\gcd(d,\chi')$, $M_{d,\chi}^{C_g}$
and $M_{d,\chi'}^{C_g}$ are Galois conjugates of an algebraic variety defined over $\Q(\zeta)$, where $\zeta$ is a primitive $d$-th root of unity. Theorem \ref{thm_higgs_moduli} then follows from the facts that intersection Betti numbers have a purely algebraic definition (through their \'etale description \cite{MR751966}), and are a topological invariant \cite{MR696691}. 

It is amusing to compare the proofs of Theorem \ref{thm_chi_independence_intro} and Theorem 
\ref{thm_higgs_moduli}. In both cases, it is completely unclear how to find a direct connection between the moduli spaces for distinct values of $\chi$. In order to prove Theorem 
\ref{thm_higgs_moduli},  one uses non-abelian Hodge theory and the different values of $\chi$ become connected by the action of a Galois group. 
In our proof of Theorem \ref{thm_chi_independence_intro}, 
we translate the problem into a completely different looking question in Gromov-Witten theory involving contact points with an elliptic curve, 
and the different values of $\chi$ become connected by the action of the monodromy group of the elliptic curve.

\subsection{Local-relative correspondence and real algebraic geometry}
In \cref{section_local_relative}, we explain how Theorem \ref{thm_local_relative_intro} is compatible with the specialization to 
$\PP^2$ of the local-relative correspondence of \cite{MR3948687}.
In \cref{section_real_gw}, we present a conjectural connection between genus-$0$ real Gromov-Witten invariants of $K_{\PP^2}$ and Euler characteristics of the real loci of moduli spaces of semistable one-dimensional stable sheaves on $\PP^2$. 
We refer to
\cref{section_more_gw} for precise statements.

\subsection{Plan of the paper}
In \cref{section_preliminary} we prove preliminary results on the Gromov-Witten side: contact points are torsion points and by a monodromy argument $N_{d,g}^{\PP^2/E,p}$
only depends on $p$ through $d(p)$.
In \cref{section_dimension_one_sheaves}
we show that the invariants $\Omega_{d,\chi}^{\PP^2}(y^{\frac{1}{2}})$ are the
refined Donaldson--Thomas invariants for
one-dimensional sheaves on $K_{\PP^2}$ and 
we prove Theorem 
\ref{thm_joyce_conj_intro} on the 
$\chi$-independence of the invariants 
$\Omega_{d,\chi}^{\PP^2}$.
In \cref{section_comparison_cps_wall_structure}, we compare the scattering diagram $S(\fD^\iin_{\cl^+})$ of 
\cite{bousseau2019scattering} with the wall-structure 
$\mathcal{S}$ of \cite{cps}.
In \cref{section_takahashi_proof}, we combine the previous results with
\cite{bousseau2019scattering} and \cite{gabele2019tropical} to prove Theorems \ref{thm_log_bps_local_bps_intro}-\ref{thm_local_relative_intro}.
In \cref{section_higher_genus_refinement},
we prove Theorems 
\ref{thm_log_bps_local_bps_refined_intro},
\ref{thm_equiv_conj_intro}, \ref{thm_NS_intro}
on the correspondence between the refined 
invariants $\Omega_{d,\chi}^{\PP^2}(y^{\frac{1}{2}})$ and the higher-genus Gromov-Witten invariants $N_{g,d}^{\PP^2/E}$.
In \cref{section_more_gw}, we discuss further results, including the compatibility with the local-relative correspondence
of \cite{MR3948687}, and a conjectural sheaves/Gromov-Witten correspondence in real algebraic geometry of 
$K_{\PP^2}$.
In \cref{section_heuristic_explanation}, 
we end with a heuristic explanation, based on a combination of hyperkähler rotation
and mirror symmetry, for the main connection unraveled in this paper between sheaf counting on $\PP^2$ and relative Gromov-Witten theory of $(\PP^2,E)$.

\subsection{Acknowledgments}
I thank Jinwon Choi, Michel van Garrel, Rahul Pandharipande and Juliang Shen for conversations stimulating my interest in N.\ Takahashi's conjecture. I thank Tim Gr\"afnitz for discussions on his work \cite{gabele2019tropical}. 
I thank Younghan Bae for a clarifying discussion related to Lemma 
\ref{lem_contact_torsion}. I thank Michel van Garrel, Penka Georgieva, Rahul Pandharipande, Vivek Shende and Bernd Siebert for invitations to conferences and seminars where this work has been presented and for related discussions.

I acknowledge the support of Dr.\ Max R\"ossler, the Walter Haefner Foundation and the ETH Z\"urich
Foundation.

\section{Preliminary results on the Gromov-Witten side}
\label{section_preliminary}

In this section, we prove two elementary results on maximal contact
Gromov-Witten theory of $(\PP^2,E)$. In \S \ref{section_contact_torsion}, we show that contact points with $E$ are torsion points. In \S \ref{section_monodromy}, we describe the constraints imposed by monodromy on the dependence on the contact point.

\subsection{Contact points are torsion points}
\label{section_contact_torsion}
Let $E$ be a smooth cubic curve in the complex projective plane $\PP^2$.
We fix $p_0$ as one of the $9$ flex points of $E$.
For every $d \in \Z_{>0}$ and $g \in \Z_{\geqslant 0}$, let
$\overline{M}_{g}(\PP^2/E,d)$ be the moduli space of 
genus $g$ degree $d$ stable maps to $\PP^2$ relative to $E$,
with maximal contact order with $E$ at a single point.
The moduli space $\overline{M}_{g}(\PP^2/E,d)$ is a proper Deligne-Mumford stack
\cite{MR1882667, MR1938113}.

\begin{lem}\label{lem_contact_torsion}
Let $p$ be a point of $E$ in the image of the evaluation morphism 
\[\ev \colon \overline{M}_{g}(\PP^2/E,d) 
\rightarrow E\,\]
at the contact point with $E$.
Then $p-p_0$ is a 
$(3d)$-torsion point of $\Pic^0(E)$.
\end{lem}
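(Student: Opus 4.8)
The plan is to reduce the statement to the linear equivalence $3d\,p \sim 3d\,p_0$ of degree-$3d$ divisors on $E$, which is exactly the assertion that $p-p_0$ is $(3d)$-torsion in $\Pic^0(E)$, and then to prove this equivalence by computing a single line bundle on the relative divisor in two different ways. An element of $\overline{M}_g(\PP^2/E,d)$ is a relative stable map $f\colon \Sigma \to W$ to an expansion $W = \PP^2 \cup V_1 \cup \dots \cup V_n$ of $\PP^2$ along $E$, where each bubble $V_i$ is the $\PP^1$-bundle $\PP(\cO_E \oplus N_{E/\PP^2})$ over $E$, glued along sections so that the relative divisor $D$ is the free section $E_\infty$ of the last bubble $V_n$, canonically identified with $E$ via the projection $\pi_n$. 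The contact point $p = \ev([f])$ is the image of the marked point $x$ in $D \cong E$, and predeformability guarantees that no component of $\Sigma$ maps into $D$ and that $f^{-1}(D) = 3d\cdot x$, so the contact $0$-cycle is $C\cdot D = 3d\,p$ on $D$, where $C = f_*[\Sigma]$.

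The two inputs coming from the flex point are $\cO_{\PP^2}(1)|_E = \cO_E(3p_0)$ (the tangent line at the flex $p_0$ meets $E$ in $3p_0$) and hence $N_{E/\PP^2} = \cO_{\PP^2}(3)|_E = \cO_E(9p_0)$; in particular both the polarization and the normal bundle of $E$ are supported at $p_0$. The heart of the argument would be to compute $\cO_W(C)|_D \in \Pic(E)$ twice. On one hand, restricting the defining section of $\cO_W(C)$ to $D$ and using that the contact cycle is $3d\,p$ gives $\cO_W(C)|_D = \cO_E(3d\,p)$. On the other hand, I would compute the same bundle by transporting the class of $C$ through the chain of bubbles: writing $\cO_W(C)|_{\PP^2} = \cO_{\PP^2}(d_0)$ and $\cO_{V_i}(C_i) = \pi_i^*\mathcal{M}_i \otimes \cO_{V_i}(a_i E_0(V_i))$ on each bubble, with $a_i = C_i\cdot F_i$ the fibre-degree, the continuity of $f$ at the nodes forces the restrictions of $\cO_W(C)$ computed from the two components adjacent to each gluing curve $D_i = E_\infty(V_i) = E_0(V_{i+1})$ to agree. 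Using $E_0(V_i)^2 = \deg N_{E_0/V_i} = -9$ (fixed by the requirement that adjacent normal bundles be opposite, since $E^2 = 9$) and $\cO_{V_i}(E_0)|_{E_\infty} = \cO_E$, these matching conditions read $\mathcal{M}_{i+1} = \mathcal{M}_i \otimes \cO_E(9a_{i+1}p_0)$ and $\mathcal{M}_1 = \cO_E((3d_0 + 9a_1)p_0)$.

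Telescoping these relations and using the degree bookkeeping $d = d_0 + 3\sum_{i} a_i$ (under $W \to \PP^2$ the bubbles contract to $E\sim 3H$, so each $C_i$ contributes $3a_i$ to the total degree) would give $\cO_W(C)|_D = \mathcal{M}_n = \cO_E\big((3d_0 + 9\textstyle\sum_i a_i)p_0\big) = \cO_E(3d\,p_0)$. Comparing with the first computation yields $\cO_E(3d\,p) = \cO_E(3d\,p_0)$, i.e. $3d(p-p_0) \sim 0$, which is the lemma. I stress that the genus of $\Sigma$ never enters: the argument depends only on the curve class $C = f_*[\Sigma]$ and on the combinatorics of the expansion, so it applies verbatim in arbitrary genus, which is precisely the improvement over the genus-$0$ discussion recalled in the introduction.

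The step I expect to be the main obstacle is making the intersection theory on the reducible expanded surface $W$ rigorous, in particular justifying that $\cO_W(C)|_{V_i} = \cO_{V_i}(C_i)$ and that the agreement of these restrictions across each gluing curve $D_i$ encodes exactly the continuity of the stable map at the corresponding nodes. Once the bookkeeping of multidegrees and normal bundles along the chain of bubbles is set up carefully, the telescoping is formal, and the flex condition is what guarantees that every correction term is a multiple of $p_0$, so that the final class is automatically supported at $p_0$.
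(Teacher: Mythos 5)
Your proposal is correct and follows essentially the same route as the paper: decompose the target into $\PP^2$ plus a chain of bubbles $\PP(N_{E|\PP^2}\oplus\cO_E)$, use $\Pic(\PP^2)=\Z$ together with $N_{E|\PP^2}=\cO_E(9p_0)$ to see that every correction term is a multiple of $p_0$, telescope along the chain, and observe that the argument only sees the pushforward cycle and hence is genus-independent. The only difference is packaging: the paper never forms a global line bundle $\cO_W(C)$ on the reducible expansion (the technical point you flag as the main obstacle), but instead argues by induction over the bubbles, tracking the intersection cycle $\sum_k m_{kj}q_{kj}$ in $\Pic^0(E_j)$ and noting that consecutive cycles differ by a multiple of $p_{0j}$, which sidesteps the Cartier issue entirely.
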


\begin{proof}
Let $N_{E|\PP^2}$ be the normal line bundle to $E$ in $\PP^2$. We have 
$N_{E|\PP^2}=\cO_{\PP^2}(3)|_E=\cO_E(9p_0)$.
By definition of relative stable maps \cite{MR1882667, MR1938113},
a point of 
$\overline{M}_{g}(\PP^2/E,d)$
is a map $f \colon C \rightarrow 
B_0 \cup \dots \cup B_n$, where 
the target is obtained from $\PP^2$
by $n$ successive degenerations to the normal cone of $E$.
We have $B_0 =\PP^2$ containing the cubic curve $E=E_0$, and, for 
$1\leqslant j \leqslant n$, each irreducible component $B_j$ is isomorphic to the 
$\PP^1$-bundle $\PP(N_{E|\PP^2} \oplus \cO_E)$ over $E$, with two sections 
$E_{j-1}$ and $E_j$. The components $B_j$ and $B_{j+1}$ are glued together along $E_j$.
We define $p_{0j}$ by induction on $j$:
$p_{00} \coloneq p_0 \in E_0=E$, and for $j
\geqslant 1$, $p_{0j} \in E_j$ is the intersection with $E_j$ of the $\PP^1$-fiber of 
$B_j$ passing through $p_{0(j-1)}$.

The maximal tangency condition for $f$ is imposed along the divisor $E_n$ in $B_n$:
$f(C)$ intersects $E_n$ in a unique point 
$q_n \in E_n$ with multiplicity $3d$.
We have to show that $3d(q_n-p_{0n})
=0 \in \Pic^0(E_n)$.

For every $0 \leqslant j \leqslant n$, 
let $C_j$ be the union of irreducible components of $C$ mapping to $B_j$, and let $f_j \colon C_j \rightarrow B_j$ be the 
restriction of $f$ to $C_j$.
For every $0 \leqslant j \leqslant n$, $f(C)$ intersects $E_j$ in finitely many points 
$q_{kj}\in E_j$ with multiplicities $m_{kj}$. We show by induction on $j$ that
$\sum_k m_{kj}(q_{kj}-p_{0j})
=0$ in $\Pic^0(E_j)$.

We first treat the case $j=0$.
As $\Pic(\PP^2)=\Z$, the curve $f_0(C_0)$ is linearly equivalent to a 
multiple of the tangent line to $E$ at the flex point $p_0$. Restriction of this linear equivalence to $E_0$ gives
$\sum_k m_{k0}(q_{k0}-p_{00})
=0$ in $\Pic^0(E_0)$.

Assume by induction that 
\[\sum_k m_{k(j-1)}(q_{k(j-1)}-p_{0(j-1)})
=0\] 
in $\Pic^0(E_{j-1})$. 
As $B_j=\PP(N_{E|\PP^2} \oplus \cO_E)$
and $N_{E|\PP^2}=\cO_E(9p_0)$, 
$\sum_k m_{kj} q_{kj}$ differs from
$\sum_k m_{k(j-1)} q_{k(j-1)}$
by a multiple of $p_{0j}$
in $\Pic(E_j) \simeq \Pic(E_{j-1})$.
It follows that
\[\sum_k m_{kj}(q_{kj}-p_{0j})
=0\] 
in $\Pic^0(E_{j})$.

\end{proof}

\subsection{Monodromy constraint}
\label{section_monodromy}

In this section we prove Lemma \ref{lem_monodromy}, which was used in the introduction to reduce the dependence on $p$ of the invariants $N_{g,d}^{\PP^2/E,p}$ to a dependence on $d(p)$.

Recall that we denote by $P_d$ the set of $(3d)^2$ points $p$ of $E$ such that $p-p_0$ is a $(3d)$-torsion point in $\Pic^0(E)$, and  that, 
for $p \in \bigcup_{d \geqslant 1} P_d$, we denote by
$d(p)$ the smallest $d \in \Z_{>0}$ such that 
$p \in P_d$.
For every $p \in P_d$ and $p_0$ a flex point of $E$, we denote $\ord(p-p_0)$ the order of $p-p_0$ in the group 
$\Pic^0(E)[3d]$ of $(3d)$-torsion 
points of $\Pic^0(E)$

\begin{lem}\label{lem_divisibility}
Let $d \in \Z_{>0}$, $p \in P_d$, and let $p_0$ be a flex point of $E$.
\begin{enumerate}
    \item If $3$ divides $d(p)$, then $\ord(p-p_0)=3d(p)$.
    \item If $3$ does not divide $d(p)$, then 
    $\ord(p-p_0)=3d(p)$ or $\ord(p-p_0)=d(p)$.
\end{enumerate}
\end{lem}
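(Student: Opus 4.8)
The plan is to reduce the statement to an elementary computation involving the order of $x \coloneq p - p_0$ in $\Pic^0(E)$. Write $m \coloneq \ord(x)$ and $k \coloneq d(p)$. By definition of the order of an element, for any $d \in \Z_{>0}$ we have $(3d)x = 0$ if and only if $m \mid 3d$; since $d(p)$ is by definition the smallest positive integer $d$ with $(3d)(p-p_0) = 0$, it is the smallest positive integer $d$ with $m \mid 3d$.

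First I would make this smallest $d$ explicit. Setting $g \coloneq \gcd(m,3)$, the integers $m/g$ and $3/g$ are coprime, so $m \mid 3d$ is equivalent to $(m/g) \mid d$, and therefore the smallest such $d$ is $m/g$. This yields the key relation
\[ k = \frac{m}{\gcd(m,3)}, \]
from which the two cases follow at once: if $3 \mid m$ then $g = 3$ and $m = 3k$, while if $3 \nmid m$ then $g = 1$ and $m = k$.

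It remains to rephrase these two cases in terms of the divisibility of $k = d(p)$, as in the statement, and this is the only step requiring care. The point is that $3 \mid m$ and $3 \mid k$ are not equivalent: from $k = m/g$ one sees that $3 \mid k$ forces $3 \mid m$ (otherwise $g = 1$, so $m = k$ would be divisible by $3$, contradicting $3 \nmid m$), but not conversely. Hence if $3 \mid d(p)$ the case $m = k$ is excluded and we must have $\ord(p-p_0) = m = 3k = 3d(p)$; if instead $3 \nmid d(p)$, both cases remain possible and $\ord(p-p_0)$ equals $3d(p)$ or $d(p)$. No deeper input is needed: the argument is purely group-theoretic, relying only on the definition of $d(p)$ and the equivalence $(3d)x = 0 \Leftrightarrow m \mid 3d$, and never uses the explicit structure of the torsion subgroup of $\Pic^0(E)$.
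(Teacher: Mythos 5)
Your proof is correct and follows essentially the same elementary route as the paper: both arguments use only that $d(p)$ is the minimal $d$ with $(3d)(p-p_0)=0$ together with basic divisibility, the paper via the dichotomy ``either $\ord(p-p_0)=3d(p)$, or else $3\nmid\ord(p-p_0)$ by minimality and hence $\ord(p-p_0)=d(p)$,'' and you via the equivalent closed formula $d(p)=\ord(p-p_0)/\gcd(\ord(p-p_0),3)$. The final observation that $3\mid d(p)$ excludes the case $\ord(p-p_0)=d(p)$ is exactly the point the statement is after, and you handle it correctly.
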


\begin{proof}
We have $p \in P_{d(p)}$, so $p-p_0$ is $(3d(p))$-torsion, and so $\ord(p-p_0)$ divides $3d(p)$. If 
$\ord(p-p_0)=3d(p)$, then the proof is finished. 

Therefore, we assume that 
$\ord(p-p_0)<3d(p)$. By minimality of $d(p)$, $\ord(p-p_0)$ is not divisible by $3$ (else one would have $p \in P_{\ord(p-p_0)/3}$). 
It follows that $\ord(p-p_0)$ divides 
$d(p)$, and so, using $p \in P_{\ord(p-p_0)}$ and  the minimality of $d(p)$ again, that
$\ord(p-p_0)=d(p)$.
\end{proof}

The following result is a variant of \cite[Lemma 4.13]{choi2018log}. In particular, the idea of the proof to use the freedom in the choice of the flex point comes from there.

\begin{lem} \label{lem_monodromy}
For every $d \in \Z_{>0}$
and $g\in \Z_{\geqslant 0}$, the 
Gromov-Witten invariants $N_{g,d}^{\PP^2/E,p}$ only depend on the point $p \in P_d$ through the integer $d(p)$. 
\end{lem}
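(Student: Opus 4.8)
The plan is to combine deformation invariance of Gromov--Witten theory with the classical monodromy of the universal family of smooth plane cubics. Let $U$ be the space of smooth cubic curves $E \subset \PP^2$ and let $\mathcal P_d \to U$ be the local system of finite sets whose fibre over $E$ is $P_d$; by Lemma \ref{lem_contact_torsion} the contact points indeed assemble in this way. The decomposition $\overline{M}_g(\PP^2/E,d) = \coprod_{p} \overline{M}_g(\PP^2/E,d)^p$ is by unions of connected components, varying flatly over $U$, so by deformation invariance each refined count $N_{g,d}^{\PP^2/E,p}$ (with its $(-1)^g \lambda_g$ insertion) is locally constant along $U$. Transporting around a loop $\gamma \in \pi_1(U)$ returns $(\PP^2,E)$ to itself while permuting $P_d$ by monodromy, so $N_{g,d}^{\PP^2/E,\gamma\cdot p} = N_{g,d}^{\PP^2/E,p}$. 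Since $d(p)$ is intrinsic --- it is determined by the monodromy-invariant hyperplane class $H = \cO_{\PP^2}(1)|_E$ through $3d(p)\, p \sim d(p)\, H$ --- the monodromy orbits refine the level sets of $d$, and it remains to prove that they coincide with them.

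First I would extract a purely linear action by marking a flex. Over the finite cover of $U$ parametrising pairs $(E,p_0)$, the monodromy fixes $p_0$, hence acts on $P_d \cong \Pic^0(E)[3d] \cong (\Z/3d)^2$ (via $p \mapsto p - p_0$) through the linear action on $\Pic^0(E)$ alone, with no translation. This family dominates $\mathcal M_{1,1}$ with connected fibres (each elliptic curve embeds as a plane cubic with $p_0$ an inflection via $|3p_0|$), so its monodromy realizes the full modular group $\mathrm{SL}_2(\Z) = \mathrm{Sp}(H_1(E,\Z))$ and therefore surjects onto $\mathrm{SL}_2(\Z/3d)$. As $\mathrm{SL}_2(\Z/3d)$ acts transitively on the set of elements of any fixed order in $(\Z/3d)^2$, I conclude that $N_{g,d}^{\PP^2/E,p}$ depends only on $\ord(p-p_0)$.

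To finish I would use Lemma \ref{lem_divisibility} together with a flex-moving loop. If $3 \mid d(p)$, Lemma \ref{lem_divisibility} forces $\ord(p-p_0) = 3 d(p)$, so $d(p)$ already determines $\ord(p-p_0)$ and the previous step gives the claim. If $3 \nmid d(p) =: k$, then $\ord(p-p_0) \in \{k, 3k\}$, and I must show these two values yield the same invariant. Here I pass back to $U$ itself, where $\pi_1(U)$ permutes the nine flexes nontrivially (classically through the Hessian group, which acts transitively on them), and pick $\gamma$ with $\gamma(p_0) = p_0'$ a distinct flex. Writing $v = p - p_0$, one has $\gamma(p) - p_0 = L_\gamma v + t_\gamma$ with $L_\gamma \in \mathrm{SL}_2(\Z/3d)$ and $t_\gamma = p_0' - p_0$ a nonzero element of $\Pic^0(E)[3]$; since $\gcd(\ord(v),3)=1$ gives $\langle L_\gamma v \rangle \cap \langle t_\gamma \rangle = 0$, the image has order $\lcm(k,3) = 3k$ while $d$ is preserved. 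Thus monodromy carries an $\ord = k$ contact point to an $\ord = 3k$ one with the same value of $d$, equating their invariants. Combining the two steps shows $N_{g,d}^{\PP^2/E,p}$ depends on $p$ only through $d(p)$.

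The main obstacle is not the final order bookkeeping, which is elementary once Lemma \ref{lem_divisibility} is in hand, but the two structural inputs: verifying that the pointed contributions $N_{g,d}^{\PP^2/E,p}$ are genuinely deformation invariant (so that the components deform flatly with compatible virtual classes as $\mathcal P_d$ is followed along paths), and pinning down the monodromy precisely enough --- the full linear group $\mathrm{SL}_2(\Z/3d)$ from the marked-flex family, and a nontrivial flex permutation from $U$ --- to guarantee transitivity on each order locus and the single crossing between the two order values attached to a given $d(p)$.
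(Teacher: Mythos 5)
Your proof is correct and follows essentially the same strategy as the paper's: deformation invariance plus surjectivity of the monodromy onto $SL(2,\Z/3d)$ reduces the invariant to a function of $\ord(p-p_0)$, and Lemma \ref{lem_divisibility} leaves only the ambiguity $\ord(p-p_0)\in\{k,3k\}$ when $3\nmid k=d(p)$, which both arguments resolve by exploiting that flexes differ by nonzero $3$-torsion. The one point of divergence is how that last step is packaged: the paper re-chooses the reference flex (replacing $p_0$ by $p_0-t$ for a suitably chosen nonzero $3$-torsion $t$) so that $p-p_0$ and $p'-p_0$ acquire the common order $3k$ before applying the linear monodromy, whereas you keep $p_0$ fixed and invoke an element of $\pi_1(U)$ moving $p_0$ to a different flex; your version is equally valid but needs the additional classical input that the monodromy acts nontrivially (through the Hessian group) on the nine flexes, which the paper's re-basing trick sidesteps.
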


\begin{proof}
We fix a $d \in \Z_{>0}$ and two points $p, p' \in P_d$
such that $d(p)=d(p')$. We have to show that 
$N_{g,d}^{\PP^2/E,p}=N_{g,d}^{\PP^2/E,p'}$.

We show below that there exists a flex point $p_0$ of $E$ such that $\ord(p-p_0)=\ord(p'-p_0)$.
This will be enough. Indeed, the monodromy of the family of smooth cubics $E$ in $\PP^2$ maps surjectively to $SL(2,\Z)$
acting on $\Pic^0(E)[3d] \simeq (\Z/(3d))^2$, and so if 
$\ord(p-p_0)=\ord(p'-p_0)$, then $p$ and $p'$ can be related by monodromy and so the result follows from deformation invariance in relative Gromov-Witten theory.

We first choose $p_0$ any flex point of $E$. If $3$ divides 
$d(p)=d(p')$, then by Lemma 
\ref{lem_divisibility}, $\ord(p-p_0)=3d(p)=3d(p')=\ord(p-p_0)$ and we are done.

If $3$ does not divide $d(p)=d(p')$ and $\ord(p-p_0)=\ord(p'-p_0)$, then we are also done.
Therefore, by Lemma \ref{lem_divisibility}, and up to exchanging $p$ and $p'$, we can assume that $3$ does not divide $d(p)=d(p')$,
$\ord(p-p_0)=3d(p)$ and $\ord(p'-p_0)=d(p)$.
Then $d(p)(p-p_0)$ is nonzero $3$-torsion.
If $t$ is nonzero $3$-torsion, then 
$d(p)t$ is also nonzero $3$-torsion as $3$ does not divide 
$d(p)$. So if $t_1$ and $t_2$ are both nonzero $3$-torsion,
with $d(p)t_1=d(p)t_2$, then $t_1=t_2$. It follows that there exists $t$ nonzero $3$-torsion such that $d(p)t \neq -d(p)(p-p_0)$, and so $\ord(p-p_0+t)=3d(p)$ and 
$\ord(p'-p_0+t)=3d(p)$. So it is enough to replace 
$p_0$ by $p_0-t$.
\end{proof}

\section{One-dimensional Gieseker semistable sheaves on $\PP^2$}
\label{section_dimension_one_sheaves}

\subsection{Statement of the $\chi$-independence of $\Omega_{d,\chi}^{\PP^2}$}
\label{chi_independ_statement}

We refer to \cite{huybrechts2010geometry} as general reference on Gieseker semistable sheaves. For every $d \in \Z_{>0}$ and $\chi \in \Z$, let $M_{d,\chi}$ be the moduli space of S-equivalence classes of Gieseker semistable sheaves on $\PP^2$, supported on curves of degree $d$ and of Euler characteristic $\chi$.
It is proved in \cite{MR1263210} that, for every $d \in \Z_{>0}$ and $\chi \in \Z$, $M_{d,\chi}$ is a
nonempty integral normal projective variety of dimension $d^2+1$. If $d$ and $\chi$ are coprime, then $M_{d,\chi}$ is smooth.
However, $M_{d,\chi}$ is generally singular if $d$ and $\chi$ are not coprime.

Nevertheless, the intersection cohomology groups $IH^j(M_{d,\chi}, \Q)$ behave as well as cohomology of a smooth projective variety
\cite{MR572580,MR696691,MR751966}.
We denote by $Ib_{2j}(M_{d,\chi})$ the corresponding intersection Betti numbers and by $Ie(M_{d,\chi})$ the corresponding intersection topological Euler characteristic\footnote{In \cite[\S 2.4]{bousseau2019scattering}, the intersection Euler characteristic is denoted by $Ie^+(M_{d,\chi})$.}.
According to \cite[Corollary 6.1.3]{bousseau2019scattering} the odd degree part of intersection cohomology of $M_{d,\chi}$ vanishes, so 
$Ib_{2k+1}(M_{d,\chi})=0$ for every $k \in \Z$, and
$Ie(M_{d,\chi}) \in \Z_{>0}$.

For every $d \in \Z_{>0}$ and $\chi \in \Z$, we define 
\[ \Omega_{d,\chi}^{\PP^2}(y^{\frac{1}{2}}) \coloneq 
(-y^{\frac{1}{2}})^{-\dim M_{d,\chi}}
\sum_{j=0}^{\dim M_{d,\chi}}
Ib_{2j}(M_{d,\chi}) y^{j} \in \Z[y^{\pm \frac{1}{2}}]\,,\]
and 
\[ \Omega_{d,\chi}^{\PP^2}
\coloneq 
\Omega_{d,\chi}^{\PP^2}
(y^{\frac{1}{2}}=1)=(-1)^{\dim M_{d,\chi}}
Ie(M_{d,\chi})=(-1)^{d-1} Ie(M_{d,\chi}) \in \Z\,.\]

\begin{thm}[=Theorem \ref{thm_joyce_conj_intro}] \label{thm_joyce_conj}
The intersection Euler characteristics 
$Ie(M_{d,\chi})$ are independent of $\chi$, that is, for every $d \in \Z_{\geqslant 1}$
and $\chi, \chi' \in \Z$, we have 
\[ \Omega_{d,\chi}^{\PP^2}=\Omega_{d,\chi'}^{\PP^2}\,.\]
\end{thm}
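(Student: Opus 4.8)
The plan is to identify the invariant $\Omega_{d,\chi}^{\PP^2}$ with the Donaldson--Thomas invariant counting one-dimensional Gieseker semistable sheaves on the Calabi--Yau $3$-fold $K_{\PP^2}$ of support degree $d$ and Euler characteristic $\chi$, and then to invoke the $\chi$-independence of these DT invariants, which is already established for $K_{\PP^2}$. The theorem then follows by transport of structure.

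For the identification I would work on the sheaf-counting side over $K_{\PP^2}$. Writing $\pi\colon K_{\PP^2}\to\PP^2$ for the bundle projection, the fibres of $\pi$ are affine lines, so every compactly supported pure one-dimensional sheaf on $K_{\PP^2}$ is set-theoretically supported on the zero section $\PP^2$. Because $K_{\PP^2}$ is the total space of the canonical bundle, the moduli stack of semistable sheaves of class $(d,\chi)$ carries a symmetric obstruction theory, and the dimensional reduction principle for local surfaces expresses the resulting numerical Donaldson--Thomas invariant as a signed count on the surface side. In the coprime case $\gcd(d,\chi)=1$ the space $M_{d,\chi}$ is smooth, the Behrend function is the constant $(-1)^{\dim M_{d,\chi}}$, and the invariant equals $(-1)^{\dim M_{d,\chi}} e(M_{d,\chi})=\Omega_{d,\chi}^{\PP^2}$. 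In the non-coprime case I would instead invoke the cohomological integrality statement, under which the BPS sheaf is the intersection complex of $M_{d,\chi}$, so that the generalized DT invariant of the strictly semistable, singular problem equals $(-1)^{\dim M_{d,\chi}} Ie(M_{d,\chi})=\Omega_{d,\chi}^{\PP^2}$; the positivity and the vanishing of odd intersection cohomology from \cite[Corollary 6.1.3]{bousseau2019scattering} guarantee that this identification is consistent and well-posed.

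With this in place, the statement reduces to the $\chi$-independence of the Donaldson--Thomas invariants of one-dimensional sheaves on $K_{\PP^2}$, which is the specialization to $K_{\PP^2}$ of \cite[Conjecture 6.20]{MR2951762}. This is known: through the Gromov--Witten/Donaldson--Thomas correspondence the generating series of these invariants is governed by the genus-$0$ Gopakumar--Vafa invariants $n_{0,d}^{K_{\PP^2}}$, which depend only on the curve class $d$ and not on $\chi$, so that $\chi$-independence can be extracted by assembling \cite{MR2264664, MR2892766, MR2215440, MR2250076} together with the summary in \cite[Appendix A]{MR3861701}. Combining the two steps yields $\Omega_{d,\chi}^{\PP^2}=\Omega_{d,\chi'}^{\PP^2}$ for all $\chi,\chi'$.

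The main obstacle is the first step, and specifically the non-coprime case: when $\gcd(d,\chi)>1$ the space $M_{d,\chi}$ is genuinely singular and strictly semistable sheaves are present, so the naive Behrend-weighted Euler characteristic of the moduli stack does not obviously equal $(-1)^{\dim M_{d,\chi}} Ie(M_{d,\chi})$. Bridging this gap is precisely where the cohomological integrality/BPS-sheaf identification and the positivity input of \cite{bousseau2019scattering} are needed; the coprime case, by contrast, is essentially immediate from smoothness and dimensional reduction.
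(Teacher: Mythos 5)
Your proposal follows essentially the same route as the paper: identify $\Omega_{d,\chi}^{\PP^2}$ with the generalized Donaldson--Thomas invariants of one-dimensional sheaves on $K_{\PP^2}$ via a Meinhardt-type cohomological integrality statement (this is Proposition \ref{prop_joyce_song} in the paper, proved using \cite{meinhardt2015donaldson}), and then quote the known $\chi$-independence of those invariants, assembled from \cite{MR2892766, MR2264664, MR2215440, MR2250076} exactly as in Proposition \ref{prop_joyce_song_conjecture}. One step is stated too weakly: set-theoretic support on the zero section (which does follow from properness) is not enough to identify the moduli problem on $K_{\PP^2}$ with $M_{d,\chi}$, since a compactly supported one-dimensional sheaf on $K_{\PP^2}$ can carry a nonzero nilpotent Higgs field; one needs \emph{scheme-theoretic} support, and this genuinely uses semistability --- twisting by $K_{\PP^2}$ drops $\chi$ by $3d$, so a nonzero image of the Higgs field would violate the semistability inequalities (Lemma \ref{lem_zero_section} in the paper). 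With that lemma supplied, the moduli stack is smooth, the Behrend function is the constant $(-1)^{\dim}$, and your coprime/non-coprime dichotomy goes through as you describe.
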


The proof of Theorem \ref{thm_joyce_conj} is given in the remaining part of  \cref{section_dimension_one_sheaves}. 
Using the framework of \cite{meinhardt2015donaldson}, we first prove in Proposition \ref{prop_joyce_song}
that the signed intersection Euler characteristics $\Omega_{d,\chi}^{\PP^2}$
coincide with the Joyce-Song
\cite{MR2951762} Donaldson-Thomas invariants of the local Calabi-Yau 3-fold $K_{\PP^2}$ total space of the canonical line bundle $\cO(-3)$ of $\PP^2$. 
This reduces Theorem \ref{thm_joyce_conj} to a general conjecture of Joyce-Song (Conjecture 6.20 
of \cite{MR2951762}) about Donaldson-Thomas invariants for dimension $1$ sheaves on Calabi-Yau 3-folds. 
Then, it remains to explain that this conjecture is known in the special case of $K_{\PP^2}$. This is done in 
Proposition \ref{prop_joyce_song_conjecture} by combination of \cite{MR2264664, MR2892766, MR2215440, MR2250076}.

\subsection{Comparison with Donaldson-Thomas invariants of $K_{\PP^2}$.}
\label{section_dt}

Let $\Coh_{\leqslant 1}(K_{\PP^2})$ be the  abelian category of coherent sheaves on 
$K_{\PP^2}$ supported in dimension less or equal to $1$. We have 
\begin{align*}
K_0(\Coh_{\leqslant 1}(K_{\PP^2})) &\simeq \Z^2
\\ 
E \longmapsto & (d(E),\chi(E))\,.
\end{align*}

The map 
\begin{align*}
Z \colon K_0(\Coh_{\leqslant 1}(K_{\PP^2})) &\longrightarrow \C \\
(d,\chi) &\longmapsto Z_{d,\chi}
\coloneq -\chi+id 
\end{align*}
defines a stability condition on $\Coh_{\leqslant 1}(K_{\PP^2})$.
The notions of stability and semistability on $\Coh_{\leqslant 1}(K_{\PP^2})$ coincide with Gieseker stability and semistability. 
In this context, Joyce-Song (see \cite[\S 6.4]{MR2951762} and \cite[\S 4.4-4.5]{MR2892766}) define rational Donaldson-Thomas invariants 
\[\overline{\Omega}_{d,\chi}^{\PP^2} \in \Q\,,\] 
denoted by $\bar{DT}^{(0,0,\beta,k)}$
in \cite{MR2951762} and $N_{n,\beta}$ in 
\cite{MR2892766},
which appear as the classical limit $y^{\frac{1}{2}} \rightarrow 1$ of rational refined Donaldson-Thomas invariants
\[\overline{\Omega}_{d,\chi}^{\PP^2}(y^{\frac{1}{2}}) \in \Q(y^{\pm \frac{1}{2}})\,.\]

\begin{lem} \label{lem_zero_section}
Every semistable object $E$ in $\Coh_{\leqslant 1}(K_{\PP^2})$ not supported in dimension $0$, that is, with $d(E) \geq 1$, is scheme-theoretically supported on the zero section $\PP^2$.
\end{lem}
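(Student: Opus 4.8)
The plan is to transport the problem to the base $\PP^2$ of the bundle projection $\pi\colon K_{\PP^2}\to\PP^2$ and to reformulate ``supported on the zero section'' as the vanishing of a twisted endomorphism. First I would record that $E$ is pure of dimension one: as $d(E)\geqslant 1$ and $E$ is semistable for the stability $Z_{d,\chi}=-\chi+id$, any nonzero $0$-dimensional subsheaf $T\subseteq E$ would have $Z(T)\in\R_{<0}$, hence phase $\pi$, strictly larger than the phase of $E$, contradicting semistability. In particular $\Supp(E)$ is a proper curve, so $\pi|_{\Supp(E)}$ is finite (its fibres are proper subschemes of affine lines), and $F\coloneq\pi_\ast E$ is a pure one-dimensional sheaf on $\PP^2$ with the same numerical invariants $(d,\chi)$ as $E$, purity being preserved under finite pushforward.

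Since $\pi$ is affine, the datum of $E$ is equivalent to the datum of $F$ together with its module structure over $\pi_\ast\cO_{K_{\PP^2}}=\bigoplus_{n\geqslant 0}\cO(3n)$, that is, to a single morphism $\theta\colon F\to F\otimes\cO(-3)$ given by the action of the degree-one generator (multiplication by the tautological fibre coordinate). Under this dictionary, subobjects of $E$ in $\Coh_{\leqslant 1}(K_{\PP^2})$ correspond exactly to the $\theta$-invariant subsheaves of $F$, and $E$ is scheme-theoretically supported on the zero section if and only if $\theta=0$. The entire content of the lemma is therefore the implication: $E$ semistable $\Rightarrow\theta=0$.

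To prove it I would argue by contradiction, assuming $\theta\neq 0$. Set $K\coloneq\Ker\theta$ and let $F_1\coloneq\operatorname{im}(\cO(3)\otimes F\to F)$ be the image of the multiplication map, so that $\operatorname{im}\theta=F_1\otimes\cO(-3)$; both $K$ and $F_1$ are $\theta$-invariant, and $\theta$ acts by zero on the quotient $Q\coloneq F/F_1$. Thus $K$ is a subobject and $Q$ a quotient object of $E$. The exact sequences $0\to K\to F\to F_1\otimes\cO(-3)\to 0$ and $0\to F_1\to F\to Q\to 0$ yield $d(K)=d(Q)=:d'$, and, using $\chi(G\otimes\cO(-3))=\chi(G)-3\,d(G)$ for one-dimensional $G$, the identity $\chi(Q)=\chi(K)-3\,d(F_1)$. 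Semistability of $E$ gives $\chi(K)/d'\leqslant\chi(F)/d(F)\leqslant\chi(Q)/d'$, hence $\chi(Q)\geqslant\chi(K)$ and therefore $-3\,d(F_1)\geqslant 0$, forcing $d(F_1)=0$. Since $F_1\subseteq F$ with $F$ pure one-dimensional, this gives $F_1=0$, i.e. $\theta=0$, the desired contradiction.

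The crucial input is the negativity of the twisting bundle $\cO(-3)=K_{\PP^2}|_{\PP^2}$: it is precisely the sign of the $-3\,d(F_1)$ term that produces the contradiction, and indeed the statement fails for unstable sheaves (for example a suitable nonsplit module structure on $\cO_\ell\oplus\cO_\ell(-3)$ supported on a line $\ell$ carries a nonzero $\theta$, yet is destabilized by the subobject $\cO_\ell$). The main obstacle in a careful write-up is thus not the slope estimate itself but the bookkeeping around it: establishing the equivalence between subobjects of $E$ and $\theta$-invariant subsheaves of $F$, checking that purity passes through $\pi_\ast$, and treating the degenerate case $d'=0$ (where $K$ or $Q$ is $0$-dimensional, forcing $\theta$ to be injective and contradicting $d(F)\geqslant 1$ directly via $\chi(F(-3))\geqslant\chi(F)$).
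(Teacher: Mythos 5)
Your proof is correct and is essentially the paper's argument: both pass through the spectral correspondence to view $E$ as a Higgs sheaf $(F,\theta)$ on $\PP^2$ and then use semistability together with $\chi(G\otimes\cO(-3))=\chi(G)-3d(G)$ to force $\theta=0$. The only (cosmetic) difference is that you compare the subobject $\Ker\theta$ with the quotient $F/F_1$, whereas the paper compares $F$ with $\operatorname{Im}\theta$ viewed simultaneously as a quotient of $F$ and a subsheaf of $F\otimes K_{\PP^2}$, which absorbs your degenerate case $d'=0$ into the same phase inequality.
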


\begin{proof}
Through the classical spectral cover construction, as reviewed for example in \cite[Proposition 2.2]{MR4158461}, the category $\Coh_{\leqslant 1}(K_{\PP^2})$
is equivalent to the category of Higgs sheaves $(F,\phi)$ on $\PP^2$, with $F$ supported in dimension less or equal to $1$. Furthermore, under this equivalence,
semistability for $\Coh_{\leqslant 1}(K_{\PP^2})$ corresponds to semistability for Higgs sheaves \cite[Lemma 2.9]{MR4158461}.

Let $(F,\phi)$ be a semistable Higgs sheaf on $\PP^2$ with $d(F) \geq 1$. We have to show that $\phi=0$.  Remark that $(F,\phi)$
semistable implies that $(F \otimes K_{\PP^2}, \phi \otimes 1)$ is semistable. 
The sheaf $\Ima \phi$ is a $\phi$-invariant quotient of $F$
and a $\phi \otimes 1$-invariant subsheaf of $F \otimes K_{\PP^2}$.
Therefore, if $\Ima \phi \neq 0$, semistability implies
\[ \Arg(-\chi(F)+id(F)) \leqslant \Arg(-\chi(\Ima \phi)+id(\Ima \phi))
\leqslant \Arg(-\chi(F \otimes K_{\PP^2})+id(F))\,.\]
We have $\chi(F\otimes K_{\PP^2})=\chi(F)-3d(F)$, and so, if $d(F) \geq 1$, the above inequality cannot hold.
We conclude that $\Ima \phi=0$, that is, $\phi=0$.

\end{proof}

\begin{prop} \label{prop_joyce_song}
For every $d \in \Z_{\geqslant 1}$ and 
$\chi \in \Z$, the rational Donaldson-Thomas invariants $\overline{\Omega}_{d,\chi}^{\PP^2}$
and rational refined Donaldson-Thomas invariants $\overline{\Omega}_{d,\chi}^{\PP^2}(y^{\frac{1}{2}})$
of $K_{\PP^2}$
are expressed in terms of the invariants 
$\Omega_{d,\chi}^{\PP^2}$
and $\Omega_{d,\chi}^{\PP^2}(y^{\frac{1}{2}})$ of the moduli spaces of one-dimensional semistable sheaves on $\PP^2$ as
\[ \overline{\Omega}_{d,\chi}^{\PP^2}
=\sum_{\substack{\ell \in \Z_{\geqslant 1}
\\ \ell|(d,\chi)}}
\frac{1}{\ell^2}
\Omega_{\frac{d}{\ell},\frac{\chi}{\ell}}^{\PP^2} \,,\]
and 
\[ \overline{\Omega}_{d,\chi}^{\PP^2}(y^{\frac{1}{2}})=
\sum_{\substack{\ell\in \Z_{\geqslant 1} \\\ell|(d,\chi)}}
\frac{1}{\ell} \frac{y^{\frac{1}{2}}-y^{-\frac{1}{2}}}{y^{\frac{\ell}{2}}-y^{-\frac{\ell}{2}}} \Omega_{\frac{d}{\ell},\frac{\chi}{\ell}}^{\PP^2}(y^{\frac{\ell}{2}})\,.\]
\end{prop}

\begin{proof}
By Lemma \ref{lem_zero_section}, for every $d \in \Z_{\geqslant 1}$ and $\chi \in \Z$, the moduli spaces $M_{d,\chi}$ and moduli stacks $\fM_{d,\chi}$ of Gieseker semistable sheaves on $\PP^2$ are also the moduli spaces and moduli stacks of $Z$-semistable objects of $\Coh_{\leqslant 1}(K_{\PP^2})$.
As we have $\Ext^2_{\PP^2}(E,E)=0$ for every Gieseker semistable sheaf $E$ on $\PP^2$ not supported in dimension $0$, 
the moduli stack $\fM_{d,\chi}$ is smooth for every $d \in \Z_{\geqslant 1}$ and $\chi \in \Z$ and so its Behrend function is constant equal to $(-1)^{\dim \fM_{d,\chi}}$. Remark that the sheaves supported
in dimension $0$ are exactly those for which $Z_{d,\chi}\in \R_{<0}$, that is, 
$\frac{1}{\pi} \Arg Z_{d,\chi}=1$.

The definition of the rational Donaldson-Thomas invariants $\overline{\Omega}_{d,\chi}^{\PP^2}$ in \cite{MR2951762} uses the motivic Hall algebra of 
$\Coh_{\leqslant 1}(K_{\PP^2})$. 
Using a version of \cite[Lemma 3.5.5]{bousseau2019scattering} 
to go from the motivic Hall algebra
to numerical identities, we can rewrite this definition as follows. 
The symmetrized virtual Poincar\'e rational functions 
$\tilde{b}(\fM_{d,\chi})(y^{\frac{1}{2}})$ are defined according to
\cite[\S 3.3]{bousseau2019scattering}.
There exists a unique set of 
$\overline{\Omega}_{d,\chi}(y^{\frac{1}{2}}) \in \Z(y^{\frac{1}{2}})$,
$d \in \Z_{\geqslant 1}$, $\chi \in \Z$, such that, for every $\phi \in (0,1)$, we have the equality of power series
\[ 1+\sum_{\substack{d \in \Z_{\geqslant 1},\chi \in \Z \\ \frac{1}{\pi} \Arg Z_{d,\chi}=\phi} } \tilde{b}(\fM_{d,\chi})(y^{\frac{1}{2}})
z^{(d,\chi)}=\exp 
\left( -\sum_{\substack{d \in \Z_{\geqslant 1},\chi \in \Z \\ 
\frac{1}{\pi} \Arg Z_{d,\chi}=\phi}} 
\frac{\overline{\Omega}^{\PP^2}_{d,\chi}(y^{\frac{1}{2}})}{y^{\frac{1}{2}}
-y^{-\frac{1}{2}}}
z^{(d,\chi)}
\right)
\,.\]
Then the non-trivial result of \cite{MR2951762} is the existence of the limit
\[ \overline{\Omega}^{\PP^2}_{d,\chi} 
\coloneq \lim_{y^{\frac{1}{2}} \rightarrow 1} \overline{\Omega}^{\PP^2}_{d,\chi}
(y^{\frac{1}{2}}) \in \Q\,.\]

On the other hand, let $\Coh_{\leqslant 1}(K_{\PP^2})(\phi)$ be the abelian subcategory of
$\Coh_{\leqslant 1}(K_{\PP^2})$ whose objects are $0$ and the nonzero Gieseker semistable objects with $\frac{1}{\pi} \Arg Z=\phi$. We check that the abelian category $\Coh_{\leqslant 1}(K_{\PP^2})(\phi)$ satisfies the technical conditions (1)-(8)
required to apply 
\cite[Theorem 1.1]{meinhardt2015donaldson}. Conditions (1)-(6) are general requirements on deformation theory and moduli spaces of objects, and they are automatically satisfied for categories of coherent sheaves. The non-trivial conditions are (7) and (8). For (7), one needs to check that 
\[ (E,F):=\dim \Hom(E,F)-\dim \Ext^1(E,F)\] 
is locally constant as a function of the objects $E$ and $F$. This is clear if either $E$ or $F$ are zero. If both $E$ and $F$ are non-zero objects of $\Coh_{\leqslant 1}(K_{\PP^2})(\phi)$, then as $\phi \neq 1$, they are not supported in dimension $0$, and so by Lemma \ref{lem_zero_section}, they are scheme theoretically supported on the zero section $\PP^2$. Similarly, any non-zero object of $\Coh_{\leqslant 1}(K_{\PP^2})(\phi)$ is supported on the zero section, so $\Hom(E,F)\simeq \Hom_{\PP^2}(E,F)$, $\Ext^1(E,F) \simeq \Ext^1_{\PP^2}(E,F)$, and 
$(E,F)=\dim \Hom_{\PP^2}(E,F)-\dim \Ext^1_{\PP^2}(E,F)$, where $E$ and $F$ are viewed as sheaves on $\PP^2$. Furthermore, as $E$ and $F$ are semistable of the same phase $\phi$ on $\PP^2$ and not supported in dimension $0$, we have using Serre duality that \[ \dim \Ext^2_{\PP^2}(E,F)=\dim \Hom_{\PP^2}(F, E \otimes K_{\PP^2})=0\,.\] 
Hence, $(E,F)=\chi_{\PP^2}(E,F)$, where $\chi_{\PP^2}$ is the Euler form for sheaves on $\PP^2$. By the Riemann-Roch theorem on $\PP^2$, $\chi_{\PP^2}(E,F)$ only depends on the Chern characters $\ch(E)$ and $\ch(F)$, so (7) is satisfied.
For (8), one needs to check that $(E,F)=(F,E)$ for all $E$ and $F$ in $\Coh_{\leqslant 1}(K_{\PP^2})(\phi)$. By the previous discussion, it it enough to check that $\chi_{\PP^2}(E,F)=\chi_{\PP^2}(F,E)$. But as $E$ and $F$ are supported in dimension 1, we have 
\[ \chi_{\PP^2}(E,F)=-d(E)d(F)\] by Riemann-Roch theorem for 
$\PP^2$ (see for example \cite[Lemma 2.1.2]{bousseau2019scattering}), where $d(E)$ and $d(F)$ are the degrees of $E$ and $F$ respectively. As this expression is symmetric under the exchange of $E$ and $F$, (8) is also satisfied.

Applying \cite[Theorem 1.1]{meinhardt2015donaldson}, we obtain
\[ 1+\sum_{\substack{d \in \Z_{\geqslant 1},\chi \in \Z \\ \frac{1}{\pi} \Arg Z_{d,\chi}=\phi} } \tilde{b}(\fM_{d,\chi})(y^{\frac{1}{2}})
z^{(d,\chi)}=\exp 
\left( -\sum_{\substack{d \in \Z_{\geqslant 1},\chi \in \Z \\ 
\frac{1}{\pi} \Arg Z_{d,\chi}=\phi}} 
\sum_{\ell \geqslant 1}
\frac{1}{\ell}
\frac{\Omega^{\PP^2}_{d,\chi}(y^{\frac{\ell}{2}})}{y^{\frac{\ell}{2}}
-y^{-\frac{\ell}{2}}}
z^{(\ell d,\ell \chi)}
\right)
\,.\]
Comparing the previous equalities, we obtain that, for every $d\in \Z_{\geqslant 1}$ and $\chi \in \Z$, we have 
\[ \overline{\Omega}_{d,\chi}^{\PP^2}(y^{\frac{1}{2}})=
\sum_{\substack{\ell\in \Z_{\geqslant 1} \\\ell|(d,\chi)}}
\frac{1}{\ell} \frac{y^{\frac{1}{2}}-y^{-\frac{1}{2}}}{y^{\frac{\ell}{2}}-y^{-\frac{\ell}{2}}} \Omega_{\frac{d}{\ell},\frac{\chi}{\ell}}^{\PP^2}(y^{\frac{\ell}{2}})\,,\]
and so, taking the limit $y^{\frac{1}{2}}\rightarrow 1$, we obtain \[ \overline{\Omega}_{d,\chi}^{\PP^2}
=\sum_{\substack{\ell \in \Z_{\geqslant 1}
\\ \ell|(d,\chi)}}
\frac{1}{\ell^2}
\Omega_{\frac{d}{\ell},\frac{\chi}{\ell}}^{\PP^2} \,,\]
as desired.

Remark that the above use of \cite[Theorem 1.1]{meinhardt2015donaldson} gives a more direct proof of the existence of the limit 
\[\lim_{y^{\frac{1}{2}} \rightarrow 1} \overline{\Omega}^{\PP^2}_{d,\chi}
(y^{\frac{1}{2}})\,,\] 
and so our result is in fact logically independent of the hard 
content of \cite{MR2951762}.

\end{proof}

\subsection{Proof of the $\chi$-independence of $\Omega_{d,\chi}^{\PP^2}$}
\label{section_chi_independence}

\begin{prop}\label{prop_joyce_song_conjecture}
For every $d \in \Z_{\geqslant 1}$, $\chi, \chi' \in \Z$, we have 
\[ \overline{\Omega}^{\PP^2}_{d,\chi}
=\sum_{\substack{\ell\in \Z_{\geqslant 1}
\\\ell|(d,\chi)}} \frac{1}{\ell^2}
\overline{\Omega}^{\PP^2}_{\frac{d}{\ell},1}\,.\]
\end{prop}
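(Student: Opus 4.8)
The plan is to recognize Proposition \ref{prop_joyce_song_conjecture} as the specialization to $K_{\PP^2}$ of the Joyce--Song conjecture \cite[Conjecture 6.20]{MR2951762} on the $\chi$-independence of genus-$0$ Donaldson--Thomas invariants of one-dimensional sheaves, and to deduce it from the known curve-counting correspondences for local $\PP^2$. First I would analyze the terms $\overline{\Omega}^{\PP^2}_{d/\ell,1}$ on the right-hand side. Since $\gcd(d/\ell,1)=1$, Proposition \ref{prop_joyce_song} collapses to its single $\ell=1$ term and yields $\overline{\Omega}^{\PP^2}_{d/\ell,1}=\Omega^{\PP^2}_{d/\ell,1}=(-1)^{d/\ell-1}Ie(M_{d/\ell,1})$. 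As $M_{d/\ell,1}$ is smooth by coprimality and has vanishing odd cohomology, this equals $(-1)^{d/\ell-1}e(M_{d/\ell,1})$, which by Katz's theorem (Proposition \ref{prop_katz_conj}) is the genus-$0$ Gopakumar--Vafa invariant $n^{K_{\PP^2}}_{0,d/\ell}$ of local $\PP^2$. Thus the proposition is equivalent to the identity
\[ \overline{\Omega}^{\PP^2}_{d,\chi}=\sum_{\substack{\ell\in\Z_{\geqslant 1}\\ \ell|(d,\chi)}}\frac{1}{\ell^2}\,n^{K_{\PP^2}}_{0,d/\ell}\,, \]
whose right-hand side is manifestly independent of $\chi$.

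It then remains to prove this last identity, namely that the rational Donaldson--Thomas invariants of one-dimensional sheaves on $K_{\PP^2}$ form the multiple-cover sum of the genus-$0$ Gopakumar--Vafa invariants extracted from the Gromov--Witten theory of $K_{\PP^2}$. I would establish this by invoking the Gromov--Witten/Donaldson--Thomas correspondence of Maulik--Nekrasov--Okounkov--Pandharipande \cite{MR2264664} to tie the Donaldson--Thomas theory of $K_{\PP^2}$ to its Gromov--Witten theory, and then the identification of the one-dimensional sheaf-counting invariants with the genus-$0$ part of the Gopakumar--Vafa expansion, as worked out for local surfaces in \cite{MR2892766, MR2215440, MR2250076}. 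Because the invariants $N^{K_{\PP^2}}_{0,d}$ carry no information about the Euler characteristic $\chi$, the genus-$0$ Gopakumar--Vafa invariants they produce are automatically $\chi$-independent, and this delivers the displayed identity for every $\chi$.

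The main obstacle is precisely this last identity: the reduction of the $\ell=1$ terms to Euler characteristics and then to Gopakumar--Vafa invariants is essentially bookkeeping, but proving that the \emph{a priori} $\chi$-dependent sheaf-theoretic Donaldson--Thomas counts assemble into a single, $\chi$-independent system of genus-$0$ Gopakumar--Vafa invariants is the genuinely deep step. It cannot be read off from the moduli spaces $M_{d,\chi}$ directly, since $M_{d,\chi}\not\cong M_{d,\chi'}$ in general by \cite[Theorem 8.1]{woolf2013nef}, and it relies on the full strength of the curve-counting correspondences above rather than on any explicit geometric comparison of the moduli spaces attached to different values of $\chi$.
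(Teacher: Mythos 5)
Your proposal is correct in substance and, once unwound, rests on exactly the same chain of external inputs as the paper's proof: Toda's wall-crossing theorem \cite[Theorem 6.4]{MR2892766} identifying the $\chi$-independence (equivalently, the multiple-cover structure) of the rational Donaldson--Thomas invariants of one-dimensional sheaves with the strong rationality conjecture for stable pairs, the DT/PT correspondence, and Konishi's verification of strong rationality for toric Calabi--Yau $3$-folds via the topological vertex \cite{MR2264664, MR2215440, MR2250076}. The paper simply states this chain directly, without passing through Gopakumar--Vafa invariants.

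Two points in your write-up need repair, however. First, you invoke Proposition \ref{prop_katz_conj} to identify the $\ell=1$ terms $\overline{\Omega}^{\PP^2}_{d/\ell,1}$ with $n_{0,d/\ell}^{K_{\PP^2}}$, but in the paper that proposition is stated for the common value $\Omega_d^{\PP^2}$ (which only exists once Theorem \ref{thm_joyce_conj} is known) and its proof explicitly leans on the material ``reviewed in the proof of Proposition \ref{prop_joyce_song_conjecture}''; citing it here is a forward reference, and unwinding it to its actual inputs (strong rationality for $K_{\PP^2}$ plus the GW/stable pairs correspondence) lands you back on the paper's argument. Second, your assessment of where the difficulty sits is off: the identification $(-1)^{d-1}e(M_{d,1})=n_{0,d}^{K_{\PP^2}}$ is not ``bookkeeping'' --- it is Katz's conjecture for $K_{\PP^2}$ and is itself a consequence of the same deep correspondences. (A minor slip: the right-hand side of your displayed identity is not ``manifestly independent of $\chi$''; it depends on $\chi$ through $\gcd(d,\chi)$ --- what is $\chi$-independent is the set of ingredients $n_{0,d/\ell}^{K_{\PP^2}}$.) None of this affects the validity of the conclusion.
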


\begin{proof}
The following  sequence of arguments is also reviewed in \cite[Appendix A]{MR3861701}.

Proposition \ref{prop_joyce_song_conjecture} is the specialization to $K_{\PP^2}$ of a general conjecture about one-dimensional sheaf counting of Calabi-Yau 3-folds: see \cite[Conjecture 6.20]{MR2951762}
and \cite[Conjecture 6.3]{MR2892766}).
Toda
\cite[Theorem 6.4]{MR2892766} proved by wall-crossing in the derived category that this conjecture is equivalent to the strong rationality conjecture for stable pairs Pandharipande-Thomas invariants, see \cite[Conjecture 3.14]{MR2545686} and
\cite[Conjecture 6.2]{MR2892766}). 

Given the known ideal sheaves Donaldson-Thomas/stable pairs Pandharipande-Thomas correspondence, proved by wall-crossing in the derived category
for general Calabi-Yau 3-folds \cite{MR2813335} or by computation of both sides in the toric case \cite[\S 5]{MR2746343}, the strong rationality conjecture for stable pair Pandharipande-Thomas invariants can be translated into a strong rationality statement for ideal sheaves Donaldson-Thomas invariants.

As $K_{\PP^2}$ is a toric Calabi-Yau 3-fold, 
ideal sheaves Donaldson-Thomas invariants can be computed by localization and organized using the topological vertex formalism \cite{MR2264664}.
By a study of the explicit formulas coming from the topological vertex formalism, Konishi \cite[Theorem 1.3]{MR2215440}
proved that the strong rationality statement holds for local toric surfaces, 
and so in particular for $K_{\PP^2}$
(the proof was later generalized to arbitrary toric Calabi-Yau 3-folds in \cite{MR2250076}).
\end{proof}

We can now end the proof of
Theorem \ref{thm_joyce_conj}. Comparing  Proposition \ref{prop_joyce_song} and Proposition 
\ref{prop_joyce_song_conjecture}, we obtain that, for every $d \in \Z_{\geqslant 1}$ and
$\chi \in \Z$, we have $\Omega_{d,\chi}^{\PP^2}=\Omega_{d,1}^{\PP^2}$.
In particular, $\Omega_{d,\chi}^{\PP^2}$ does not depend on $\chi$.

One might imagine that a proof of the refined version of the strong rationality conjecture for refined stable pair invariants of $K_{\PP^2}$ could follow from a direct study of an appropriate version of the refined topological vertex and that this would lead to a proof of
Conjecture \ref{conj_refined_joyce_conj_intro} on the 
$\chi$-independence of $\Omega_{d,\chi}^{\PP^2}(y^{\frac{1}{2}})$. We postpone such study to some future work.

Let $\Omega_d^{\PP^2}$ be the common value of the 
$\Omega_{d,\chi}^{\PP^2}$, which makes sense by Theorem 
\ref{thm_joyce_conj}. Let $n_{0,d}^{K_{\PP^2}}$ be the genus-$0$ Gopakumar-Vafa invariants of $K_{\PP^2}$ defined from the genus-$0$ Gromov-Witten invariants 
$N_{0,d}^{K_{\PP^2}}$ by the multicover formula
\[
N_{0,d}^{K_{\PP^2}}=\sum_{\ell|d} \frac{1}{\ell^3}
n_{0,\frac{d}{\ell}}^{K_{\PP^2}}\,.
\]

\begin{prop} \label{prop_katz_conj}
For every $d \in \Z_{\geqslant 1}$, we have 
\[ \Omega_d^{\PP^2}=n_{0,d}^{K_{\PP^2}}\,.\]
\end{prop}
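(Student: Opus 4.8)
The plan is to identify the sheaf count $\Omega_d^{\PP^2}$ with the genus-$0$ Gopakumar--Vafa invariant $n_{0,d}^{K_{\PP^2}}$, which is precisely the genus-$0$ form of Katz's conjecture for local $\PP^2$. The strategy is to reduce to a smooth moduli space, where the intersection Euler characteristic becomes an ordinary Euler characteristic, and then to invoke the Gromov--Witten/Donaldson--Thomas correspondence for $K_{\PP^2}$ through the same web of results already used in the proof of Proposition \ref{prop_joyce_song_conjecture}.

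First I would reduce to the coprime case. By Theorem \ref{thm_joyce_conj} we have $\Omega_d^{\PP^2}=\Omega_{d,1}^{\PP^2}$, and since $\gcd(d,1)=1$ the moduli space $M_{d,1}$ is smooth; hence its intersection cohomology coincides with its ordinary cohomology, so $Ie(M_{d,1})=e(M_{d,1})$, and from $\dim M_{d,1}=d^2+1\equiv d-1 \pmod 2$ we obtain
\[\Omega_d^{\PP^2}=(-1)^{d-1}e(M_{d,1})\,.\]
It therefore suffices to prove that $(-1)^{d-1}e(M_{d,1})=n_{0,d}^{K_{\PP^2}}$.

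Next I would use the identification already established in this section. By Proposition \ref{prop_joyce_song} the invariants $\Omega_{d,\chi}^{\PP^2}$ are the integer (BPS) genus-$0$ one-dimensional Donaldson--Thomas invariants of $K_{\PP^2}$ in curve class $d$, so it remains to match these with the genus-$0$ invariants on the Gromov--Witten side. Here $n_{0,d}^{K_{\PP^2}}$ is, by definition, the genus-$0$ BPS coefficient extracted from the genus-$0$ Gromov--Witten invariants $N_{0,d}^{K_{\PP^2}}$ by the multicover formula; equivalently it is the genus-$0$ coefficient in the Gopakumar--Vafa product form of the stable-pairs partition function. Toda's wall-crossing relation \cite{MR2892766} expresses the one-dimensional Donaldson--Thomas invariants in terms of the stable-pairs invariants, and for the toric Calabi--Yau $3$-fold $K_{\PP^2}$ the stable-pairs partition function is computed by the topological vertex \cite{MR2264664} and matched with the Gromov--Witten side via the Donaldson--Thomas/stable-pairs correspondence \cite{MR2813335} together with the computation of both sides in the toric case \cite{MR2746343}. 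This is exactly the chain of results invoked in the proof of Proposition \ref{prop_joyce_song_conjecture}; isolating its genus-$0$ part yields the desired equality $\Omega_d^{\PP^2}=n_{0,d}^{K_{\PP^2}}$.

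The main obstacle is this last step: one must extract from the full Gromov--Witten/Donaldson--Thomas/stable-pairs web for $K_{\PP^2}$ the precise genus-$0$ Gopakumar--Vafa coefficient and verify that it coincides with the one-dimensional sheaf count $(-1)^{d-1}e(M_{d,1})$. All of the genuine difficulty is contained in the established correspondences for toric Calabi--Yau $3$-folds, so beyond the reduction to the smooth coprime case and careful bookkeeping of the Gopakumar--Vafa product form I expect no new input to be required.
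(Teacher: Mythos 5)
Your proposal is correct and follows essentially the same route as the paper: identify $\Omega_d^{\PP^2}$ with the stable-pairs-defined genus-$0$ Gopakumar--Vafa invariant via Toda's wall-crossing \cite[Theorem 6.4]{MR2892766} and strong rationality, then transfer to the Gromov--Witten side via the GW/DT/stable-pairs correspondences for the toric $K_{\PP^2}$ (\cite{MR2264664}, \cite{MR2813335}, \cite[\S 5]{MR2746343}). The preliminary reduction to the smooth space $M_{d,1}$ is a harmless addition not needed in the paper's argument, which works directly with $\Omega_d^{\PP^2}$.
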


\begin{proof}
More generally, for any Calabi-Yau 3-fold $X$, the Gopakumar-Vafa invariants $n_{r,\beta}^X$ of $X$ for $r\in \Z_{\geq 0}$ and $\beta \in H_2(X,\Z)$ are defined from the genus $g$ Gromov-Witten invariants $N_{g,\beta}^X$ by the formula
\[ \sum_{g \geq 0} \sum_{\beta \neq 0} N_{g,\beta}^X u^{2g-2} Q^\beta = \sum_{r \geq 0} \sum_{\beta \neq 0} n_{r,\beta}^X 
\sum_{k \geq 1} \frac{1}{k} \frac{Q^{k \beta}}{\left( 2 \sin (ku/2) \right)^{2-2g}}\]

On the other hand, by \cite[\S 3]{MR2545686}, we have BPS invariants $n_{r,\beta}^{X,PT}$ for $r \in \Z$ and $\beta \in H_2(X,\Z)$, such that for fixed $\beta$ we have $n_{r,\beta}^{X,PT}=0$ for $r>>0$ and $r<<0$, and which are defined in terms of the stable pairs Pandharipande-Thomas invariants $P_{n,\beta}^X$ by the formula
\[ \log(1+\sum_{n \in \Z} \sum_{\beta \neq 0}P_{n,\beta}^X q^n Q^\beta)= \sum_{r \in \Z} \sum_{\beta \neq 0} n_{r,\beta}^{X,PT} \sum_{k \geq 1} \frac{(-1)^{g-1}}{k}((-q)^k-2+(-q)^{-k})^{g-1} Q^{k \beta} \,.\]

The Gromov-Witten/stable pairs correspondence conjectures the equality
\[ \sum_{g \geq 0} \sum_{\beta \neq 0} N_{g,\beta}^X u^{2g-2} Q^\beta = \log(1+\sum_{n \in \Z} \sum_{\beta \neq 0}P_{n,\beta}^X q^n Q^\beta) \]
after the change of variables $q=-e^{iu}$. Moreover, according to the strong rationality conjecture for stable pairs \cite[Conjecture 3.14]{MR2545686}, one should have $n_{r,\beta}^{X,PT}=0$ for $r<0$. Hence, these two conjectures combined predict that 
\[ n_{r,\beta}^{X}=n_{r,\beta}^{X,PT}\]
for all $r$ and $\beta$.

For $X=K_{\PP^2}$, the Gromov-Witten/stable pairs correspondence follows from the combination of the Gromov--Witten/ideal sheaves correspondence proved for local toric surfaces in \cite{MR2264664} with the ideal sheaves/stable pairs
correspondence \cite{MR2813335}\cite[\S 5]{MR2746343}.
As reviewed in the proof of Proposition \ref{prop_joyce_song_conjecture}, the strong rationality conjecture for stable pairs is also proved for $K_{\PP^2}$. Hence, we deduce that
\[ n_{r,d}^{K_{\PP^2}}= n_{r,d}^{K_{\PP^2},PT} \]
for all $r$ and $d$.
Finally, by
\cite[Theorem 6.4]{MR2892766}, the strong rationality conjecture also implies the equality
\[ \Omega_d^{\PP^2} = n_{0,d}^{K_{\PP^2},PT} \,.\]
Therefore, we have 
\[\Omega_d^{\PP^2} = n_{0,d}^{K_{\PP^2}} \] 
and this concludes the proof of Proposition \ref{prop_katz_conj}.
\end{proof}

\section{Comparison of the scattering diagrams of \cite{bousseau2019scattering} and \cite{cps}}
\label{section_comparison_cps_wall_structure}
In this section we prove Proposition \ref{prop_comparing_scatterings} comparing the scattering diagram $S(\fD^\iin_{\cl^+})$ introduced in \cite{bousseau2019scattering}
with a specific consistent structure 
$\mathcal{S}$, in the sense of \cite{MR2846484}, introduced in\cite[Example 2.4]{cps}. We use freely the notions and notation introduced in 
\cite{bousseau2019scattering}.

\subsection{The integral affine manifold of \cite{cps}}
\label{section_cps}
In \cite[Example 2.4]{cps}, an integral 
affine manifold with singularities $B$, an integral polyhedral decomposition 
$\mathscr{P}$ of $B$, and a multivalued 
$\mathscr{P}$-piecewise affine function 
$\varphi_{CPS}$ on $B$ are introduced.
One can describe $B$ as the complement in 
$\R^2$ of the three cones 
\[ (1/2,1/2)+\R_{> 0} (1,0) \times \R_{> 0} (0,1)\,,\]
\[(-1/2,0)+\R_{> 0} (0,1)\times \R_{> 0} (-1,-1)\,,\]
\[(0,-1/2)+\R_{> 0} (-1,-1) \times \R_{> 0} (1,0)\,,\] 
with:
\begin{enumerate}
\item Identification of $(1/2,1/2)+\R_{\geqslant 0} (1,0)$
with $(1/2,1/2)+\R_{\geqslant 0} (0,1)$, and gluing of the integral affine structure from $(1/2,1/2)+\R_{\geqslant 0} (1,0)$ 
to $(1/2,1/2)+\R_{\geqslant 0}(0,1)$ by \[ \begin{pmatrix}
 0 & -1 \\
 1 & 2
\end{pmatrix} \in SL(2,\Z)\,.\]
\item Identification of $
(-1/2,0)+\R_{\geqslant 0}(1,0)$
with $(-1/2,0)+\R_{\geqslant 0} (-1,-1)$, and gluing of the integral affine structure from $(-1/2,0)+\R_{\geqslant 0}(1,0)$
to $(-1/2,0)+\R_{\geqslant 0} (-1,-1)$ by 
\[ \begin{pmatrix}
 3 & -1 \\
 4 & -1
\end{pmatrix} \in SL(2,\Z)\,.\]
\item 
Identification of $
(0,-1/2)+\R_{\geqslant 0}(-1,-1)$
with $(0,-1/2)+\R_{\geqslant 0} (1,0)$
and gluing of the integral affine structure from $
(0,-1/2)+\R_{\geqslant 0}(-1,-1)$
to $(0,-1/2)+\R_{\geqslant 0} (1,0)$
by \[ \begin{pmatrix}
 3 & -4 \\
 1 & -1
\end{pmatrix} \in SL(2,\Z)\,.\]
\end{enumerate}
The integral affine structure is smooth on the complement $B_0$ of the three singular points $(1/2,1/2)$, $(-1/2,0)$, $(0,-1/2)$.
Each singularity is simple in the sense that the monodromy around it is conjugated with \[\begin{pmatrix}
 1 & 1 \\
 0 & 1
\end{pmatrix}\] 
in $GL(2,\Z)$.  More precisely,
the monodromy takes this form when expressed in the basis consisting of two primitive vectors, one in the
direction of the monodromy-invariant line, and the other in the direction of
one of the faces of the cones that are removed.

The total monodromy around an anticlokwise loop encircling the three singularities is \[
\begin{pmatrix}
 3& -4 \\
 1 & -1
\end{pmatrix}
\begin{pmatrix}
 3& -1 \\
 4 & -1
\end{pmatrix}
\begin{pmatrix}
 0 & -1 \\
 1 & 2
\end{pmatrix}
= \begin{pmatrix}
 1& 9 \\
 0 & 1
\end{pmatrix}\,.
\]
The edges of the integral polyhedral decomposition $\mathscr{P}$ are:
\begin{enumerate}
\item The three unbounded edges:
 \[(1,0)+\R_{\geqslant 0}(1,0)\,, (0,1)+\R_{\geqslant 0}(0,1)\,, (-1,-1)+\R_{\geqslant 0} (-1,-1)\,.\]
\item The three line segments:
\[ [(1,0),(0,1)]\,, [(0,1),(-1,-1)]\,,
[(-1,-1),(1,0)]\,.\]
\end{enumerate}
The unique bounded two-dimensional face of $\mathscr{P}$ is the triangle $\bar{P}_0$ 
delimited by the three line segments
$[(1,0),(0,1)]$, $[(0,1),(-1,-1)]$,
$[(-1,-1),(1,0)]$. We denote by $\bar{P}_{(1/2,1/2)}$ (resp.\ $\bar{P}_{(-1/2,0)}$, $\bar{P}_{(0,-1/2)}$) the unbounded two-dimensional face of $\mathscr{P}$ 
containing $(1/2,1/2)$ (resp.\ 
$(-1/2,0)$, $(0,-1/2)$) in its boundary.
We denote by
$P_0$, $P_{(1/2,1/2)}$, $P_{(-1/2,0)}$,
$P_{(0,-1/2)}$ the interior of 
$\bar{P}_0$, $\bar{P}_{(1/2,1/2)}$, $\bar{P}_{(-1/2,0)}$,
$\bar{P}_{(0,-1/2)}$ respectively.
A representative of $\varphi_{CPS}$ can be defined by\footnote{Recall, see footnote in Definition 1.2.4 of \cite{bousseau2019scattering}, that there is a sign difference in our conventions for the function $\varphi$ with respect to \cite{MR2846484, MR2722115, cps, gabele2019tropical}.}
\begin{enumerate}
\item $\varphi_{CPS}=0$ in restriction to $\bar{P}_0$.
\item $\varphi_{CPS}=-(x+y-1)$ in restriction to
$\bar{P}_{(1/2,1/2)}$.
\item $\varphi_{CPS}=-(-2x+y-1)$ in restriction to $\bar{P}_{(-1/2,0)}$.
\item $\varphi_{CPS}=-(x-2y-1)$ in restriction to 
$\bar{P}_{(0,-1/2)}$.
\end{enumerate}
We fix this representative of $\varphi_{CPS}$ in what follows.

The triple $(B,\mathscr{P},\varphi_{CPS})$ naturally appears as tropicalization 
of a degeneration of the pair $(\PP^2,E)$, see \cite[\S 1]{gabele2019tropical}
and \cref{section_vertical_gw_refined}.

\begin{figure}[h!]
\centering
\setlength{\unitlength}{1.2cm}
\begin{picture}(10,5)
\thicklines
\put(4,1.5){\circle*{0.1}}
\put(5,3.5){\circle*{0.1}}
\put(6,2.5){\circle*{0.1}}
\put(5.5,3){\circle*{0.1}}
\put(5.5,3){\line(1,0){2.5}}
\put(5.5,3){\line(0,1){2}}
\put(5,2){\circle*{0.1}}
\put(5,2){\line(1,0){3}}
\put(5,2){\line(-1,-1){2}}
\put(4.5,2.5){\circle*{0.1}}
\put(4.5,2.5){\line(-1,-1){2}}
\put(4.5,2.5){\line(0,1){2.5}}
\put(4,1.5){\line(-1,-1){1.5}}
\put(5,3.5){\line(0,1){1.5}}
\put(6,2.5){\line(1,0){2}}
\put(4,1.5){\line(2,1){2}}
\put(4,1.5){\line(1,2){1}}
\put(5,3.5){\line(1,-1){1}}
\put(5,2.5){$P_0$}
\end{picture}
\caption{$(B, \mathscr{P})$.}
\end{figure}

\subsection{Comparison with the integral affine manifold of \cite{bousseau2019scattering}}
In \cite{bousseau2019scattering}, we defined a scattering diagram $S(\fD^\iin_{\cl^+})$
living on 
\[ U \coloneqq \{ (x,y) \in \R^2|\, y>-\frac{x^2}{2}\}\,.\]
We denote 
\[ \bar{U} \coloneqq \{ (x,y) \in \R^2|\, y\geqslant -\frac{x^2}{2}\}\,.\]

We will compare $(B,\mathscr{P},\varphi_{CPS})$ of \cite{cps} with a
polygonal approximation $\bar{C}$ of $\bar{U}$.

\begin{defn} \label{def_C_n}
For every $n \in \Z$, we denote by $\bar{C}_n$ the band in $\R^2$ delimited by the line segment $[(n-\frac{1}{2},-\frac{n(n-1)}{2}),(n+\frac{1}{2},-\frac{n(n+1)}{2})]$, and the half-lines 
\[(n-\frac{1}{2},-\frac{n(n-1)}{2})+\R_{\geqslant 0}(0,1)\]
and \[(n+\frac{1}{2},-\frac{n(n+1)}{2})+\R_{\geqslant 0}(0,1)\,.\]
For every $n \in \Z$, we denote by $C_n$ the interior of $\bar{C}_n$. Finally, we define
\[ \bar{C} \coloneq \bigcup_{n\in \Z} \bar{C_n}\,.\]
\end{defn}

We have $\bar{C} \subset \bar{U}$. Indeed, $\bar{C}$ is the subset of $\bar{U}$ delimited below by the sequence of line segments
$\bigcup_{n \in \Z}( |\fd_n^+| \cup |\fd_n^-|)$, see \cite[Definition 1.3.4]{bousseau2019scattering}. We endow
$\bar{C}$ with the integral affine structure for which the lattice of integral tangent vectors is
\[ \Z(1,0) +\Z(0,\frac{1}{3})\]
at every point. 
We denote by
$\mathscr{P}_{\bar{C}}$ the integral polyhedral decomposition of $\bar{C}$ whose two-dimensional faces are given by the
$C_n$, $n \in \Z$.
We denote by $\varphi_{\bar{C}}$
the $\mathscr{P}_{\bar{C}}$-piecewise 
affine function on $\bar{C}$ whose restriction to $\bar{C}_n$ is given by 
\[ \varphi_{\bar{C}}=-nx-y+\frac{n^2}{2}\,,\] for every $n \in \Z$.

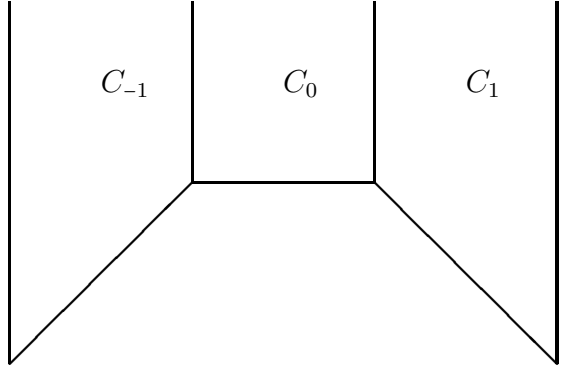
\begin{figure}[h!]
\centering
\setlength{\unitlength}{1.2cm}
\begin{picture}(10,5)
\thicklines
\put(2,1){\line(1,1){2}}
\put(2,1){\line(0,1){4}}
\put(4,3){\line(0,1){2}}
\put(4,3){\line(1,0){2}}
\put(6,3){\line(1,-1){2}}
\put(6,3){\line(0,1){2}}
\put(8,1){\line(0,1){4}}
\put(3,4){$C_{-1}$}
\put(5,4){$C_0$}
\put(7,4){$C_1$}
\end{picture}
\caption{$(\bar{C},\mathscr{P}_{\bar{C}})$.}
\end{figure}

\begin{lem}\label{lem_universal_cover}
The universal cover of $B-P_0$ is isomorphic to $\bar{C}$ as an integral affine manifold. Under this isomorphism, the lift to the universal cover of the integral polyhedral decomposition $\mathscr{P}$ restricted to $B-P_0$ is the integral polyhedral decomposition $\mathscr{P}_{\bar{C}}$ of $\bar{C}$, and the lift to the universal cover of $\varphi_{CPS}$ restricted to $B-P_0$ is 
$\varphi_{\bar{C}}$.
\end{lem}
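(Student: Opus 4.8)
The plan is to exhibit an explicit integral-affine covering map $\pi \colon \bar{C} \to B - P_0$, carrying $\mathscr{P}_{\bar{C}}$ to $\mathscr{P}|_{B-P_0}$ and $\varphi_{\bar{C}}$ to $\varphi_{CPS}$, and then to identify its deck group with $\pi_1(B-P_0) \simeq \Z$. Since $\bar{C}$ is a contractible subset of $\R^2$ (a strip widening above the polyline $\bigcup_{n}(|\fd_n^+| \cup |\fd_n^-|)$), it is simply connected; hence, once $\pi$ is shown to be such a covering, $\bar{C}$ is automatically the universal cover and the asserted lifts of $\mathscr{P}$ and $\varphi_{CPS}$ follow by construction. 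Note also the sanity check that the central face $\bar{P}_0$ carries $\varphi_{CPS} = 0$, which matches no band $\bar{C}_n$ (these have $\varphi_{\bar{C}} = -nx-y+\tfrac{n^2}{2}$), so it is natural that $P_0$ must be excised.

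First I would record the deck transformation on the $\bar{C}$ side. A direct computation shows that the affine map $T(x,y) = (x+3,\, y-3x-\tfrac{9}{2})$ sends $\bar{C}_n$ to $\bar{C}_{n+3}$, preserves $\mathscr{P}_{\bar{C}}$, and satisfies $\varphi_{\bar{C}} \circ T = \varphi_{\bar{C}}$ exactly on each band. Its linear part $\left(\begin{smallmatrix} 1 & 0 \\ -3 & 1 \end{smallmatrix}\right)$ becomes, in the basis $((1,0),(0,\tfrac{1}{3}))$ of the lattice $\Z + \tfrac{1}{3}\Z$, the matrix $\left(\begin{smallmatrix} 1 & 0 \\ -9 & 1 \end{smallmatrix}\right)$, which preserves the lattice and whose transpose-inverse is exactly the CPS total monodromy $\left(\begin{smallmatrix} 1 & 9 \\ 0 & 1 \end{smallmatrix}\right) = A_3 A_2 A_1$. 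Thus $\langle T \rangle$ acts freely and properly discontinuously on $\bar{C}$ with three bands per fundamental domain, matching the three unbounded faces of $B - P_0$.

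Next I would build $\pi$ band by band. I would fix integral-affine isomorphisms $\psi_0, \psi_1, \psi_2$ from $\bar{C}_0, \bar{C}_1, \bar{C}_2$ onto the three unbounded faces $\bar{P}_{(1/2,1/2)}, \bar{P}_{(-1/2,0)}, \bar{P}_{(0,-1/2)}$, in the cyclic order prescribed by the identifications $A_1, A_2, A_3$, normalized so that the bottom vertices $v_n = (n-\tfrac{1}{2}, -\tfrac{n(n-1)}{2})$ map to the three singular points, so that $\varphi_{CPS}\circ \psi_n = \varphi_{\bar{C}}|_{\bar{C}_n}$, and so that $\psi_n$ carries the lattice $\Z + \tfrac{1}{3}\Z$ to the standard lattice $\Z^2$ of $B$. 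The content is then to verify that across each shared vertical wall the transition $\psi_{n+1}\circ\psi_n^{-1}$ agrees with the corresponding CPS edge identification $A_i$, and that the kink of $\varphi_{\bar{C}}$ across that wall (coefficient $-1$) equals the kink of $\varphi_{CPS}$ across the matching edge; in both pictures this is the simple parabolic datum attached to a single focus-focus point, each $A_i$ being conjugate to $\left(\begin{smallmatrix} 1 & 1 \\ 0 & 1\end{smallmatrix}\right)$. I would then extend $\pi$ to all of $\bar{C}$ by $\pi\circ T = \pi$, which is consistent precisely because $T$ realizes the monodromy $A_3A_2A_1$ and preserves $\varphi_{\bar{C}}$. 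A key structural remark makes this unramified: each CPS singularity $(1/2,1/2)$, $(-1/2,0)$, $(0,-1/2)$ is the midpoint of an edge of the triangle $\bar{P}_0$, so removing the open face $P_0$ places all three singular points on the inner boundary of $B - P_0$; consequently $B - P_0$ is an honest annulus with $\pi_1 \simeq \Z$ and no interior singularities, and the $v_n$ map to these boundary points so that $\pi$ is a genuine integral-affine covering.

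The main obstacle is the finite but delicate bookkeeping of the third step: pinning down which band corresponds to which face and in which cyclic orientation, and then checking simultaneously that the three explicit matrices $A_1, A_2, A_3$ of \cite{cps} together with the three linear pieces of $\varphi_{CPS}$ transform, under the charts $\psi_n$ and the rescaling of lattices, into the consecutive band data $\varphi_{\bar{C}} = -nx - y + \tfrac{n^2}{2}$. The hazard is cyclic-order and orientation error in matching the gluings. I would guard against this using the consistency constraint that the product of the three wall transitions must equal both the linear part of $T$ and the CPS total monodromy $\left(\begin{smallmatrix} 1 & 9 \\ 0 & 1\end{smallmatrix}\right)$; once this is arranged the remaining identifications are forced, and simple-connectedness of $\bar{C}$ completes the proof.
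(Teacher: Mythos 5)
Your overall route is the one the paper itself gestures at: its proof of this lemma is a one-line appeal to ``elementary computations from the explicit descriptions,'' and exhibiting an explicit integral-affine covering $\bar{C}\to B-P_0$ face by face, with deck group generated by an affine shift by three bands realizing the total monodromy, is exactly that computation spelled out. Several of your explicit checks are correct: $T(x,y)=(x+3,\,y-3x-\tfrac92)$ does send $\bar{C}_n$ to $\bar{C}_{n+3}$ and satisfies $\varphi_{\bar{C}}\circ T=\varphi_{\bar{C}}$; its linear part is the cube of the one-band shift and is conjugate to the total monodromy $\left(\begin{smallmatrix}1&9\\0&1\end{smallmatrix}\right)$; three bands per fundamental domain match the three unbounded faces; the three singular points are indeed the midpoints of the edges of $\bar{P}_0$, so they sit on the inner boundary of $B-P_0$, which is an annulus with $\pi_1\simeq\Z$; and contractibility of $\bar{C}$ makes it automatically the universal cover once the covering map is produced.

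There is, however, a concrete error in your normalization that would derail the band-by-band verification as written: the corners $v_n=(n-\tfrac12,-\tfrac{n(n-1)}2)$ of $\partial\bar{C}$ are \emph{not} the lifts of the singular points. They are the lifts of the vertices $(1,0)$, $(0,1)$, $(-1,-1)$ of the triangle $\bar{P}_0$, and the vertical walls between consecutive bands are the lifts of the three unbounded edges of $\mathscr{P}$ emanating from those vertices. The singular points lift to the midpoints $(n,-\tfrac{n^2}{2})$ of the bottom edges of the bands: these are the points $s_n$ on the parabola $y=-x^2/2$ from which, as the paper states immediately after the lemma, the initial rays $\fd_n^{\pm}$ of $\tilde{\mathcal{S}}$ emanate in the monodromy-invariant directions, i.e.\ along the bottom edge of the band (the lift of the triangle edge whose midpoint is the singularity; note the invariant direction of $\left(\begin{smallmatrix}0&-1\\1&2\end{smallmatrix}\right)$ at $(\tfrac12,\tfrac12)$ is $(1,-1)$, the direction of $[(1,0),(0,1)]$). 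With your anchoring, the bottom edge of $\bar{C}_n$, which has integral length one, would be sent to a length-one segment \emph{starting} at the midpoint of a triangle edge and hence overshooting that edge, so $\psi_n$ could not carry $\mathscr{P}_{\bar{C}}$ onto $\mathscr{P}|_{B-P_0}$. Your proposed safeguard, matching the product of wall transitions against the total monodromy, constrains only linear parts and would not catch this misplacement of base points; the fix is to anchor $s_n=(n,-\tfrac{n^2}{2})$ to the singular points and $v_n$ to the triangle's vertices, after which the rest of your plan goes through.
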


\begin{proof}
This follows directly by elementary computations from the explicit descriptions of $(B,\mathscr{P},\varphi_{CPS})$ and 
$(\bar{C},\mathscr{P}_{\bar{C}},\varphi_{\bar{C}})$ given above.
\end{proof}

\subsection{Comparison of the scattering diagrams}

Applying the general algorithm of
\cite{MR2846484, MR2722115},
a consistent structure $\mathcal{S}$ on 
\[(B,\mathscr{P},\varphi_{CPS})\] 
is defined in 
\cite{cps}. It is proved in 
\cite[\S 5]{gabele2019tropical} that $\mathcal{S}$ is entirely supported on $B-P_0$, and so can be lifted to its universal cover. 
By Lemma \ref{lem_universal_cover}, we obtain a consistent structure $\tilde{\mathcal{S}}$ on $(\bar{C}, \mathscr{P}_{\bar{C}},\varphi_{\bar{C}})$. 
According to \cite[Definition 5.10]{gabele2019tropical}
the initial rays of
$\tilde{\mathcal{S}}$ come out from the lifts of the singularities of the affine structure on $B$, which are exactly the points $s_n$,
$n \in \Z$, of \cite[\S 1.3]{bousseau2019scattering}. The directions of these initial rays are the lifts of the monodromy invariant directions around the singularities, which are exactly the directions $m_n^-$ and $m_n^+$, $n \in \Z$, of \cite[\S 1.3]{bousseau2019scattering}.
Finally, from 
\[ \exp \left( \sum_{\ell \geqslant 1}
\ell \frac{(-1)^{\ell-1}}{\ell^2}z^{\ell m}
\right)=1+z^m\,,\]
we obtain that the automorphisms attached to the initial rays of $\tilde{\mathcal{S}}$ coincide with the automorphisms attached to the initial rays of the scattering diagram $\fD^\iin_{\cl^+}$
defined in \cite[\S 1.4]{bousseau2019scattering}.
By uniqueness of the consistent completion, we obtain the equality
\[ \tilde{\mathcal{S}}=S(\fD^\iin_{\cl^+})\,,\]
where $S(\fD^\iin_{\cl^+})$ is defined in
\cite{bousseau2019scattering} as the consistent completion of $\fD^\iin_{\cl^+}$.

More precisely, there is additional point to discuss in order to really make sense of this equality. In the algorithm of
\cite{MR2846484} constructing $\tilde{\mathcal{S}}$ from its initial data, the local steps are local scattering diagrams in the sense of \cite[\S 1.1]{bousseau2019scattering} if, at a point $\sigma$, one uses as function $\varphi \colon M \rightarrow \R$ the function $d_\sigma \varphi_{\bar{C}}$,
differential of $\varphi_{\bar{C}}$ at the point $\sigma$, that is, $(a,b) \mapsto -an-b$ if $\sigma \in 
C_n$ (and $(a,b) \mapsto \min(-an-b,-a(n+1)-b)$ if $\sigma$ belongs to the intersection of $C_n$ and $C_{n+1}$).
On the other hand, the scattering diagrams of Section 1.2 of
\cite{bousseau2019scattering} are defined as having local scattering diagrams at a point $\sigma$ defined using the function $\varphi_\sigma \colon (a,b) \mapsto -ax-b$ if $\sigma=(x,y)$. 
Therefore, $\tilde{\mathcal{S}}$ is a priori not a scattering diagram in the sense of \cite[\S 1.2]{bousseau2019scattering} and $S(\fD_{\cl^+}^\iin)$ is a priori not a consistent structure in the sense of \cite{MR2846484}. 

Nevertheless, the only practical role of the functions 
$d_\sigma \varphi_{\bar{C}}$ and $\varphi_\sigma$ is to act as a regulator in computations of the local scattering diagrams (through the insertion of $t^{\varphi_\sigma(m)}$ and the expansion according to $t$-powers). As the collection of functions $\sigma \mapsto d_\sigma \varphi_{\bar{C}}$ is a piecewise constant approximation of the continuous family 
$\sigma \mapsto \varphi_\sigma$ (we have indeed $n-\frac{1}{2}\leqslant x \leqslant n+\frac{1}{2}$ if $n \in \bar{C}_n$), it follows that if the initial rays of $\tilde{\mathcal{S}}$ and $S(\fD^\iin_{\cl^+})$ coincide, then all the rays of 
$\tilde{\mathcal{S}}$ and $S(\fD^\iin_{\cl^+})$ coincide: exactly the same computations are done at each local scattering, only the orders in $t$ at which the computations are done are slightly different due to the small difference between $x$ and $n$ if $x \in \bar{C}_n$.
Therefore, we conclude that $\tilde{\mathcal{S}}$ and $S(\fD^\iin_{\cl^+})$ have the same rays, so $\tilde{\mathcal{S}}$ can be viewed as a scattering diagram in the sense of  \cite[\S 1.2]{bousseau2019scattering}, $S(\fD^\iin_{\cl^+})$ can be viewed as a consistent structure in the sense of \cite{MR2846484}, and the equality 
$\tilde{\mathcal{S}}=S(\fD^{\iin}_{\cl^+})$
makes sense either way.

The reason why we defined scattering diagrams in \cite[\S 1.2]{bousseau2019scattering} using the real family of functions $\sigma \mapsto \varphi_\sigma$ and not the piecewise constant family $\sigma \mapsto d_{\sigma} \varphi_{\bar{C}}$, 
is that only the former is natural from the point of view of the scattering diagram $\fD^{\PP^2}$ defined  in terms of stability conditions
in 
\cite[\S 2]{bousseau2019scattering}.
Using the notation of \cite{bousseau2019scattering}, if
$\gamma=(r,d,\chi)$ is the class of a  
$\sigma$-semistable object, then, by definition of a stability condition, we know that $\Ima Z_\gamma^\sigma>0$, which is equivalent to $-ax-b>0$ if $m_\gamma=(r,-d)=(a,b)$ and $\sigma=(x,y)$. It is not clear a priori why, for $n \in \Z$ such that $n-\frac{1}{2}\leqslant x \leqslant n+\frac{1}{2}$, one should have $-an-b>0$ (which is stronger that $-ax-b>0$ if $x<n$ and $a<0$ or if $x>n$ and $a>0$). At the end of the day, once we know by  \cite[Theorem 5.2]{bousseau2019scattering} that $\fD^{\PP^2}
=S(\fD^\iin)$, and that $S(\fD^\iin)=\tilde{\mathcal{S}}$, we obtain that this is indeed true, but it was not obvious a priori. A short way to phrase the issue is that, from the point of view of stability conditions, the lines $x=n$ or the polyhedral decomposition $\mathscr{P}_{\bar{C}}$ have no particular signification, whereas they have a clear meaning from the point of view of \cite{MR2846484} ($\mathscr{P}$ appears as the dual intersection complex of the special fiber of the degeneration of $(\PP^2,E)$ constructed in \cite[\S 1]{gabele2019tropical}).

The statement below summarizes the above discussion.

\begin{prop} \label{prop_comparing_scatterings}
The lift $\tilde{\mathcal{S}}$ to the universal cover of 
$B-P_0$ of the consistent structure $\mathcal{S}$ defined in \cite{cps}
for $(\PP^2,E)$ coincides with the scattering diagram 
$S(\fD^{\iin}_{\cl^+})$ defined in \cite{bousseau2019scattering}
and viewed as living in $\bar{C}$.
\end{prop}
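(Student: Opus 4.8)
The plan is to reduce the asserted equality to a comparison of initial data, exploiting the uniqueness of the consistent completion. First I would transport the consistent structure $\mathcal{S}$ from $(B, \mathscr{P}, \varphi_{CPS})$ onto $\bar{C}$. By Lemma \ref{lem_universal_cover} the universal cover of $B - P_0$ is isomorphic, as an integral affine manifold together with its polyhedral decomposition and piecewise-affine function, to $(\bar{C}, \mathscr{P}_{\bar{C}}, \varphi_{\bar{C}})$. Since $\mathcal{S}$ is supported entirely away from $P_0$ by \cite[\S 5]{gabele2019tropical}, it lifts to a consistent structure $\tilde{\mathcal{S}}$ on $\bar{C}$, and the whole comparison thereby takes place on $\bar{C}$.

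Next I would match the initial data of $\tilde{\mathcal{S}}$ with that of $\fD^\iin_{\cl^+}$. The initial rays of $\tilde{\mathcal{S}}$ emanate from the lifts of the singular points of the affine structure on $B$, which under Lemma \ref{lem_universal_cover} are exactly the points $s_n$ of \cite[\S 1.3]{bousseau2019scattering}; their directions are the monodromy-invariant directions $m_n^\pm$. These coincide with the support of $\fD^\iin_{\cl^+}$. For the wall-functions, the crucial input is the elementary identity
\[ \exp\left( \sum_{\ell \geqslant 1} \ell \frac{(-1)^{\ell-1}}{\ell^2} z^{\ell m} \right) = 1 + z^m \,,\]
which rewrites the automorphism prescribed by the algorithm of \cite{MR2846484, MR2722115} as the factor $1 + z^m$ defining the initial rays of $\fD^\iin_{\cl^+}$ in \cite[\S 1.4]{bousseau2019scattering}. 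With the initial data matched, uniqueness of the consistent completion yields $\tilde{\mathcal{S}} = S(\fD^\iin_{\cl^+})$.

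The step requiring genuine care — the main obstacle — is that the two frameworks do not literally use the same notion of local scattering, so the equality must first be made meaningful. The algorithm of \cite{MR2846484} regulates each local computation through $d_\sigma \varphi_{\bar{C}}$, the differential at $\sigma$ of the piecewise-affine function, whereas the scattering diagrams of \cite[\S 1.2]{bousseau2019scattering} use the continuously varying function $\varphi_\sigma \colon (a,b) \mapsto -ax - b$ attached to $\sigma = (x,y)$. A priori, then, $\tilde{\mathcal{S}}$ is not a scattering diagram in the sense of \cite{bousseau2019scattering} and $S(\fD^\iin_{\cl^+})$ is not a consistent structure in the sense of \cite{MR2846484}. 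I would resolve this by noting that $\sigma \mapsto d_\sigma \varphi_{\bar{C}}$ is a piecewise-constant approximation of $\sigma \mapsto \varphi_\sigma$: for $\sigma = (x,y) \in C_n$ one has $n - \tfrac{1}{2} \leqslant x \leqslant n + \tfrac{1}{2}$, so the two regulators differ only in the $t$-orders at which otherwise identical local scattering computations are organized. Since agreement of the initial rays forces agreement of all the rays, the produced rays coincide under either convention, and the equality $\tilde{\mathcal{S}} = S(\fD^\iin_{\cl^+})$ is well-posed and holds in both senses.
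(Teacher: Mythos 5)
Your proposal is correct and follows essentially the same route as the paper: lift $\mathcal{S}$ to $\bar{C}$ via Lemma \ref{lem_universal_cover}, match the initial rays (positions $s_n$, directions $m_n^\pm$, and wall functions via the identity $\exp\bigl(\sum_{\ell\geqslant 1}\ell\frac{(-1)^{\ell-1}}{\ell^2}z^{\ell m}\bigr)=1+z^m$), invoke uniqueness of the consistent completion, and resolve the mismatch between the regulators $d_\sigma\varphi_{\bar{C}}$ and $\varphi_\sigma$ by the same piecewise-constant approximation argument. Nothing essential is missing.
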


\subsection{Stringy K\"ahler moduli space} \label{section_stringy_kahler}
This section is not logically necessary to follow the rest of the paper until the 
motivational heuristic explanation described in \S \ref{section_heuristic_explanation}.

The consistent structure
$\tilde{\mathcal{S}}$ being defined as lift to the universal cover of $\mathcal{S}$ on $B-P_0$, there is a natural action of $\pi_1(B-P_0) \simeq \Z$ on $\tilde{\mathcal{S}}$. On the other hand, we have a symmetry $\psi(1)$ of $S(\fD^\iin_{\cl^+})$ (see \cite[\S 1.5]{bousseau2019scattering}). Under the identification $\tilde{\mathcal{S}}
=S(\fD^\iin_{\cl^+})$, the (correctly chosen) generator of $\pi_1(B-P_0) \simeq \Z$ acts as $\psi(1)^3$. The action of $\psi(1)$ itself is related to a $\Z/3$ symmetry of $\mathcal{S}$.

By Lemma \ref{lem_universal_cover}, we can identify the universal cover of $B-\bar{P_0}$ with a subset of $U$, and so, using  
\cite[Proposition 2.14]{bousseau2019scattering}, we can view the universal cover of $B-\bar{P_0}$ as a subset of $\Stab(\PP^2)$. 
In fact, stability conditions on 
$\D^b(\PP^2)$ corresponding to points of $U$ can also be viewed as stability conditions on the derived category $\D^b_0(K_{\PP^2})$ of coherent sheaves on $K_{\PP^2}$
set-theoretically supported on the zero-section \cite{MR2852118}. Thus, we can view the universal cover of $B-\bar{P}_0$
as embedded in the space $\Stab_0(K_{\PP^2})$ of Bridgeland stability conditions on 
$\D^b_0(K_{\PP^2})$. 
It is natural to ask if the universal cover of $B_0$, and not just of 
$B_0-\bar{P}_0$, can be embedded in $\Stab_0(K_{\PP^2})$ in a similar way. 
We will see below, using \cite[\S 9]{MR2852118}, that there exists an embedding
of $U$ in $\Stab_0(K_{\PP^2})$, which is different from the one given by
\cite[Proposition 2.14]{bousseau2019scattering}, and such that the corresponding embedding 
of the universal cover of $B-\bar{P}_0$ in 
$\Stab(K_{\PP^2})$ naturally extends to an embedding of the universal cover of
$B_0$ in $\Stab(K_{\PP^2})$. 

Following \cite[\S 9]{MR2852118}, we denote
by $\cM_{\Gamma_1(3)}=(\C-\mu_3)/\mu_3$ the moduli space of elliptic curves with 
$\Gamma_1(3)$ level structure. We can view 
$\cM_{\Gamma_1(3)}$ as 
$\PP^1$ with the points $z=0$ and $z=-\frac{1}{27}$ removed, and a stacky 
$\Z/3$ point at $z=\infty$. The fundamental group of $\cM_{\Gamma_1(3)}$ is 
$\Gamma_1(3)$. We denote by
$\widetilde{\cM}_{\Gamma_1(3)}$ the universal cover of $\cM_{\Gamma_1(3)}$, and $\widetilde{\cM}^{(3)}_{\Gamma_1(3)}=\C-\mu_3$, $3$ to $1$ cover of $\cM_{\Gamma_1(3)}$. 
\cite[Equation (22)]{MR2852118} defines holomorphic functions $a(z)$ and $b(z)$ on
$\widetilde{\cM}_{\Gamma_1(3)}$, which are periods relevant for mirror symmetry for 
$K_{\PP^2}$. Using $\Rea a(z)$ and 
$3 \Rea b(z)$ as local integral affine coordinates, we view
$\widetilde{\cM}^{(3)}_{\Gamma_1(3)}$ 
as an integral affine manifold.

\begin{prop} \label{prop_integral_affine}
The multivalued map 
\begin{align*} \widetilde{\cM}^{(3)}_{\Gamma_1(3)}& \longrightarrow \R^2 \\
z &\longmapsto (x,y) :=(\Rea a(z), 3\Rea b(z))\end{align*}
induces an isomorphism of integral affine manifolds 
$\widetilde{\cM}_{\Gamma_1(3)}^{(3)} \simeq B_0$.
\end{prop}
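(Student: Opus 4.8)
The plan is to prove the isomorphism by exhibiting $z \mapsto (\Rea a(z), 3\Rea b(z))$ as the developing map of the integral affine structure on the period side and matching it with the explicit developing data of $B_0$ recorded in \S\ref{section_cps}. Recall from \cite[\S 9]{MR2852118} that $a(z)$ and $b(z)$ form a basis of solutions of the Picard--Fuchs equation of local $\PP^2$, holomorphic and multivalued on $\cM_{\Gamma_1(3)}$ and single-valued on $\widetilde{\cM}_{\Gamma_1(3)}$. First I would check that $(\Rea a, 3\Rea b)$ is a local immersion away from the three special points $z=0,\,-1/27,\,\infty$: the Wronskian of $a,b$ is nonvanishing on $\widetilde{\cM}^{(3)}_{\Gamma_1(3)}=\C-\mu_3$, so the period map $z\mapsto (a(z),b(z))$ is a local biholomorphism onto a domain in $\C^2$ on which $(\Rea a, 3\Rea b)$ restricts to a real-analytic local diffeomorphism to $\R^2$. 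This makes $z\mapsto(\Rea a, 3\Rea b)$ a candidate developing map; the factor $3$ is the normalization needed to match the lattice $\Z+\tfrac{1}{3}\Z$ underlying the affine structure of \S\ref{section_cps}.

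Second --- and this is the heart of the argument --- I would compute the monodromy of $(a,b)$ around each special point of $\cM_{\Gamma_1(3)}$ and match it with the CPS monodromy. The points $z=0$ (large volume), $z=-1/27$ (conifold) and $z=\infty$ (orbifold point of order $3$) carry the standard local monodromies of the local $\PP^2$ Picard--Fuchs equation, which generate $\Gamma_1(3)\subset SL(2,\Z)$. On the $\mu_3$-cover $\widetilde{\cM}^{(3)}_{\Gamma_1(3)}=\C-\mu_3$ the three conifold preimages $\zeta\in\mu_3$ carry three conjugate parabolic (trace $2$) monodromies, and I claim that in the integral basis normalizing $(\Rea a, 3\Rea b)$ these are exactly the three gluing matrices $\begin{pmatrix} 0 & -1 \\ 1 & 2\end{pmatrix}$, $\begin{pmatrix} 3 & -1 \\ 4 & -1\end{pmatrix}$, $\begin{pmatrix} 3 & -4 \\ 1 & -1\end{pmatrix}$ of \S\ref{section_cps}, whose product is the large-volume monodromy $\begin{pmatrix} 1 & 9 \\ 0 & 1\end{pmatrix}$. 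Since these are precisely the monodromies around the three singular points of the affine structure on $B$, the affine holonomy representations of $B_0$ and of $\widetilde{\cM}_{\Gamma_1(3)}$ coincide.

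Third, I would upgrade the local and monodromy match to a global isomorphism. An integral affine structure on a surface is determined up to isomorphism by its developing map and holonomy, so it suffices to compare developed images with compatible identifications. Here I would invoke Lemma \ref{lem_universal_cover} and Proposition \ref{prop_comparing_scatterings}: away from the central face $P_0$ the affine manifold $B-P_0$ has universal cover $\bar{C}$, and the symmetry $\psi(1)$ of \cite[\S 1.5]{bousseau2019scattering} realizes the deck $\mu_3$-action, so that the developed image of $\widetilde{\cM}^{(3)}_{\Gamma_1(3)}$ matches $\bar{C}$ outside the lift of $P_0$. It then remains to check that $(\Rea a, 3\Rea b)$ extends this identification across the region containing $P_0$, i.e. that the developing map is a bijection onto $B_0$ near the orbifold locus; this follows from the order-$3$ behaviour of the periods at $z=\infty$, which produces the $\Z/3$-symmetric triangle $P_0$ as the image of a fundamental domain for the $\mu_3$-action.

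The main obstacle is the monodromy matching in the second step: one must analytically continue the hypergeometric periods $a(z),b(z)$ around all three singular points and express the resulting connection matrices in one fixed integral basis, with the normalizations (the factor $3\Rea b$ and the sign convention of the footnote to Definition 1.14 of \cite{bousseau2019scattering}) chosen so that the three parabolic generators come out \emph{equal} to the CPS gluing matrices rather than merely conjugate to them. Once the bases are aligned so that their product is $\begin{pmatrix} 1 & 9 \\ 0 & 1\end{pmatrix}$, the immersion and global-bijectivity statements are routine given the explicit descriptions of \S\ref{section_cps} and Lemma \ref{lem_universal_cover}.
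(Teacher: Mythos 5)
Your proposal is correct and follows essentially the same route as the paper: the paper's proof likewise proceeds by identifying the universal cover of $B-\bar{P}_0$ via Lemma \ref{lem_universal_cover}, using the monodromy of $(a(z),b(z))$ around $z=0$ to descend to the $3$-fold cover, checking that the three points $z=-\tfrac{1}{27}$ land on the three affine singularities, and matching the explicit monodromies. Your additional scaffolding (the Wronskian argument for local immersivity and the developing-map/holonomy rigidity used to globalize) just makes explicit what the paper leaves as "one checks."
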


\begin{proof}
We denote by $C \subset \R^2$ the interior of 
$\bar{C}$.
We use Lemma 
\ref{lem_universal_cover} to identify the universal cover of $B-\bar{P}_0$ with 
$C$ and we choose a fundamental domain $D$ of 
$B-\bar{P}_0$ in $C$. As reviewed in \cite[\S 9]{MR2852118}, the monodromy transformation of
$(a(z), b(z))$ around $z=0$ is given by $a(z) \mapsto a(z)+1$, $b(z) \mapsto b(z)-a(z)-\frac{1}{2}$.
On the other hand, the map $(x,y) \mapsto (x+1,y-x-\frac{1}2)$ maps the point
$(n-\frac{1}{2},-\frac{n(n-1)}{2})$
to $(n+\frac{1}{2},-\frac{n(n+1)}{2})$ for every $n$,
and in fact induces
a bijection from 
$\bar{C}_n$ to $\Bar{C}_{n+1}$ for every $n$ (see Definition \ref{def_C_n}).
It follows that
\[ \{z \in \widetilde{\cM}_{\Gamma_1(3)}|
(\Rea a(z), 3 \Rea b(z)) \in D \} \rightarrow D \]
descends to an isomorphism 
\[\{z \in \widetilde{\cM}^{(3)}_{\Gamma_1(3)}|
(\Rea a(z), 3 \Rea b(z))\in D \} \simeq  D\,.\]
Using the determinations of $a(z)$ and $b(z)$ defined by the explicit power series expansion around $z=0$ in \cite[p43]{MR2852118}, we see that $\Rea a(-\frac{1}{27})=0$ by a direct inspection of the power series, and we have $b(-\frac{1}{27})=0$ by \cite[p45]{MR2852118}. Hence, 
\[ z \mapsto (\Rea a(z), 3 \Rea b(z))\] 
maps the three points
$z=-\frac{1}{27}$ on the boundary 
$\widetilde{\cM}^{(3)}_{\Gamma_1(3)}$ to exactly the three points on the boundary of $D$ corresponding to the three singular points of the integral affine structure on $B$.

It remains to check that the monodromies of $\widetilde{\cM}^{(3)}_{\Gamma_1(3)}$ and $B_0$ match.
We have already done it around $z=0$, and so it remains to do it around the singular point $z=-\frac{1}{27}$. 
As reviewed in \cite[\S 9]{MR2852118}, the monodromy  around $z=-\frac{1}{27}$ in $\widetilde{\cM}^{(3)}_{\Gamma_1(3)}$
is categorically the spherical twist around $\mathcal{O}_{\PP^2}$, and so is given by $a(z) \mapsto a(z)+3b(z)$ and $b(z) \mapsto b(z)$. On the other hand, the monodromy of $B_0$ around the point $0 \in \bar{C}$ is $(x,y)\mapsto (x+y,y)$. 
These two monodromy coincide and this concludes the proof.
\end{proof}

\cite[Theorem 9.3]{MR2852118} gives an embedding of 
$\widetilde{M}_{\Gamma_1(3)}$, and so, by Proposition \ref{prop_integral_affine}, of the universal cover of $B_0$, in $\Stab_0(K_{\PP^2})$. 

The corresponding embedding of $U$ is different from the one of 
\cite[Proposition 2.14]{bousseau2019scattering}: the real part of the central charges is the same but the imaginary part is different.
As the scattering diagram $\fD^{\PP^2}_{u,v}$ of 
\cite{bousseau2019scattering} 
is defined in terms of the vanishing of the real part of the central charge, it is very likely that all our arguments, and in particular \cite[Theorem 5.9]{bousseau2019scattering}, should remain valid using the embedding of 
$U$ in $\Stab_0(K_{\PP^2})$ given by \cite[Theorem 9.3]{MR2852118} instead of the one given by
\cite[Proposition 2.14]{bousseau2019scattering}.

In fact, from the point of view of mirror symmetry, the embedding of
\cite[Theorem 9.3]{MR2852118} is the `right one': it is the one extending to the universal cover of the global stringy K\"ahler moduli space $\cM_{\Gamma_1(3)}$ of $K_{\PP^2}$, and so the one relevant for physics (see \cite[\S 0.4.3]{bousseau2019scattering}).
In particular, our heuristic explanation for 
\cite[Theorem 5.9]{bousseau2019scattering} given in \cref{section_heuristic_explanation} really applies to the embedding of  \cite[Theorem 9.3]{MR2852118}. The use of the embedding of 
\cite[Proposition 2.14]{bousseau2019scattering} in the \cite{bousseau2019scattering} is motivated by the fact that it is related to the standard upper half-plane already 
studied in the literature, and by the related fact that it is easier to deal with polynomials than with transcendental functions.

\section{Unrefined sheaf counting and genus-$0$ Gromov-Witten theory of $(\PP^2,E)$}
\label{section_takahashi_proof}

In this section, we prove Theorem \ref{thm_log_bps_local_bps_intro} establishing 
a connection between the signed intersection Euler characteristics \[ \Omega_{d,\chi}^{\PP^2} \]
of moduli spaces of Gieseker semistable one-dimensional sheaves on $\PP^2$ and the relative BPS invariants 
\[ \Omega_{d,k}^{\PP^2/E}\] 
defined via genus-$0$ Gromov-Witten theory of $(\PP^2,E)$
in \cref{section_takahashi_intro}. According to Proposition 
\ref{prop_comparing_scatterings}, we have the equality of scattering diagrams 
$S(\fD^{\iin}_{\cl^+})=\tilde{\mathcal{S}}$. By the main result of \cite{bousseau2019scattering}
reviewed in \cref{section_vertical_sheaves} the vertical rays of 
$S(\fD^{\iin}_{\cl^+})$ compute the sheaf invariants $\Omega_{d,\chi}^{\PP^2}$.
On the other hand, by the main result of \cite{gabele2019tropical} 
reviewed in \cref{section_vertical_gw},
the vertical rays of $\tilde{\mathcal{S}}$ compute the genus-$0$
Gromov-Witten theory of $(\PP^2,E)$. Combining this two facts, we prove 
Theorem \ref{thm_log_bps_local_bps_intro} in \cref{section_proof_thm_log_bps_local_bps}. Finally, we explain how to deduce
Theorem \ref{thm_local_relative_intro}
in \cref{section_thm_local_relative_intro}.

We use freely the notions and notation introduced in 
\cite{bousseau2019scattering}. As we know that $S(\fD^{\iin}_{\cl^+})=\tilde{\mathcal{S}}$
by Proposition 
\ref{prop_comparing_scatterings}, we will always use the notation 
$S(\fD^{\iin}_{\cl^+})$ and we will no longer use the notation 
$\tilde{S}$.

\subsection{Scattering and unrefined sheaf-counting on $\PP^2$}
\label{section_vertical_sheaves}

The scattering diagram $S(\fD^\iin_{\cl^+})$ is defined 
in \cite{bousseau2019scattering} as the consistent completion of an explicit initial 
scattering diagram $\fD^\iin_{\cl^+}$ (see \cite[Definition 1.4.10]{bousseau2019scattering}). 

On the other hand, viewing $U$ as a space of Bridgeland stability conditions
on the bounded derived category $\D^b(\PP^2)$ of coherent sheaves on $\PP^2$, we defined in \cite[Definition 2.5.8]{bousseau2019scattering} a scattering diagram
$\fD^{\PP^2}_{\cl^+}$ using intersection Euler characteristics of 
moduli spaces of $\sigma$-semistable objects in $\D^b(\PP^2)$ for $\sigma \in U$.

One of the main results of \cite{bousseau2019scattering} is that the algorithmically defined scattering diagram $S(\fD^\iin_{\cl^+})$ and the geometrically defined 
scattering diagram $\fD^{\PP^2}_{\cl^+}$ coincide: by 
\cite[Theorem 5.2.3]{bousseau2019scattering}, we have
\[ S(\fD^\iin_{\cl^+}) = \fD^{\PP^2}_{\cl^+} \,.\]

We focus on the unbounded vertical rays of $S(\fD^\iin_{\cl^+})$. We show below that they are naturally indexed by $d \in \Z_{\geq 1}$ and $\chi \in \Z$, and that the function 
$H_{\fd_{d,\chi}}$ attached to the ray $\fd_{d,\chi}$ are expressed in terms of the intersection Euler characteristics of the moduli spaces of Gieseker semistable one-dimensional sheaves on $\PP^2$, supported on curves of degree $d'$ and with
holomorphic Euler characteristic $\chi'$, for $(d',\chi')$ dividing $(d,\chi)$.

\begin{lem} \label{lem_vertical_rays}
Let $d$ be a positive integer. For every unbounded ray $\fd$
of $S(\fD^\iin_{\cl^+})$ of class $m_\fd=(0,-d)$, there exists a unique $\chi \in \Z$ such that $|\fd|$ is contained in the vertical line of equation 
\[ x=\frac{1}{d}\left(\chi-\frac{3}{2}\right)\,.\]

Moreover, for every $d \in \Z_{\geqslant 1}$ and $\chi \in \Z$, there exists a unique unbounded ray in $S(\fD^\iin_{\cl^+})$, that we denote by
$\fd_{d,\chi}$, of class $m_{\fd_{d,\chi}}
=(0,-d)$ and contained in the vertical line of equation $ x=\frac{1}{d}\left(\chi-\frac{3}{2}\right)$.
\end{lem}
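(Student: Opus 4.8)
The plan is to reduce everything to the geometric description of the same scattering diagram. By \cite[Theorem 5.2.3]{bousseau2019scattering} we have $S(\fD^\iin_{\cl^+}) = \fD^{\PP^2}_{\cl^+}$, and the latter is built from stability conditions on $\D^b(\PP^2)$: an unbounded ray is supported on (a piece of) the locus $\{\sigma \in U : \Rea Z^\sigma_\gamma = 0\}$ for some class $\gamma$ that is $\sigma$-semistable there, and in the conventions of \cite{bousseau2019scattering} the class $m_\fd$ of the ray is the image of $\ch(\gamma)$ under the projection to the degree data, so that $m_\fd = (0,-d)$ corresponds precisely to $\ch_0(\gamma) = 0$ and $\ch_1(\gamma) = d$. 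First I would record that a ray of class $(0,-d)$ is thus governed by a one-dimensional sheaf class supported on curves of degree $d$, whose only remaining discrete invariant is $\chi = \chi(\gamma) \in \Z$; by Riemann--Roch on $\PP^2$ this is the same datum as $\ch_2(\gamma) = \chi - \tfrac32 d$.

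Next I would compute the support. Using the explicit central charge attached to $\sigma = (x,y) \in U$ in \cite[Proposition 2.14]{bousseau2019scattering}, the real part $\Rea Z^\sigma_\gamma$ of a class with $\ch_0(\gamma) = 0$ is \emph{independent of $y$} and affine-linear in $x$; this $y$-independence is exactly what distinguishes these classes from the slanted walls and is the key point forcing the wall to be genuinely vertical. Solving $\Rea Z^\sigma_\gamma = 0$ and inserting the Riemann--Roch value of $\ch_2(\gamma)$ yields, after the normalization of \cite{bousseau2019scattering}, the single vertical line $x = \tfrac1d(\chi - \tfrac32)$. This proves the first assertion: $|\fd|$ is contained in this line, and since $\chi = dx + \tfrac32$ is injective in $x$ for fixed $d$, the integer $\chi$ is uniquely determined by $\fd$.

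For the converse, fix $d \in \Z_{>0}$ and $\chi \in \Z$ and take $\gamma$ with $\ch_0 = 0$, $\ch_1 = d$, $\chi(\gamma) = \chi$. The moduli space $M_{d,\chi}$ is nonempty by \cite{MR1263210}, so the attached intersection-Euler-characteristic contribution $\Omega^{\PP^2}_{d,\chi}$ is positive by \cite[Corollary 6.1.3]{bousseau2019scattering}; hence the wall-function of $\gamma$ is nontrivial and $\gamma$ contributes a genuine ray of class $(0,-d)$ on the line $x = \tfrac1d(\chi - \tfrac32)$. This ray is unbounded because, as $y \to +\infty$, the stability condition $(x,y)$ enters the large-volume chamber where Bridgeland and Gieseker semistability coincide, so $\gamma$ remains semistable all the way up the line; and it is the unique such ray, since fixing the class $(0,-d)$ together with the vertical support line pins down the contributing sheaf class. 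I would then set $\fd_{d,\chi}$ to be this ray. The main obstacle I anticipate is purely bookkeeping but genuinely delicate: matching the algorithmically-defined class $m_\fd$ of a wall of $S(\fD^\iin_{\cl^+})$ with the Chern-character data $(\ch_0,\ch_1) = (0,d)$ under the identification of \cite[Theorem 5.2.3]{bousseau2019scattering}, verifying that \emph{every} vertical ray arises from such a sheaf class with no spurious extra rays, and tracking the normalizations carefully enough that the constant $\tfrac32$ emerges correctly from Riemann--Roch and the central-charge convention.
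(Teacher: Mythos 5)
Your proposal is correct and takes essentially the same route as the paper: identify $S(\fD^\iin_{\cl^+})$ with the geometric scattering diagram $\fD^{\PP^2}_{\cl^+}$, observe that a ray of class $(0,-d)$ corresponds to a rank-zero class $\gamma=(0,d,\chi)$ whose support line (read off in the paper directly from \cite[Definition 2.5.8]{bousseau2019scattering}, derived by you from the $y$-independence of $\Rea Z^\sigma_\gamma$) is vertical and determined by $\chi$, and obtain existence from the nonemptiness of $M_{d,\chi}$, the nonvanishing of $Ie(M_{d,\chi})$ via \cite[Corollary 6.1.3]{bousseau2019scattering}, and the reduction of Bridgeland to Gieseker semistability on unbounded vertical rays via \cite[Lemma 6.1.1]{bousseau2019scattering}. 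The only differences are expository (your Riemann--Roch detour to $\ch_2$ versus the paper's direct quotation of the line equation), not mathematical.
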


\begin{proof}
By \cite[Definition 2.5.8]{bousseau2019scattering}, every ray of $\fD^{\PP^2}_{\cl^+}$
is contained in a line of equation $ry+dx+r+\frac{3}{2}d-\chi=0$
for some $\gamma =(r,d,\chi)\in \Gamma$. If $\fd$ is of class 
$m_\fd=(0,-d)$, it means that $r=0$, and so 
$|\fd|$ is contained of the line of equation $dx+\frac{3}{2}d-\chi=0$.

For an unbounded vertical ray, Bridgeland $\sigma$-stability 
for $\sigma \in U$ reduces to Gieseker stability by 
\cite[Lemma 5.1.1]{bousseau2019scattering}.
Therefore, the existence of a ray 
$\fd_{d,\chi}$ as in the statement of 
Lemma \ref{lem_vertical_rays} follows from
the definition 
\cite[Definition 2.5.8]{bousseau2019scattering} of $\fD^{\PP^2}_{\cl^+}$, and from
the fact that, for every $d \in \Z_{>0}$ and 
$\chi \in \Z$, the moduli space 
$M_{d,\chi}$ of Gieseker semistable 
degree-$d$ one-dimensional sheaves on $\PP^2$ of holomorphic Euler characteristic $\chi$ is nonempty, 
and so, by \cite[Corollary 6.1.3]{bousseau2019scattering}, $Ie(M_{d,\chi})
\neq 0$.
\end{proof}


\begin{thm}\label{thm_boussseau}
For every $d \in \Z_{>0}$ and $\chi \in \Z$, the function 
$H_{\fd_{d,\chi}}$ attached to the unbounded vertical ray 
$\fd_{d,\chi}$ in the scattering diagram $S(\fD^\iin_{\cl^+})$ is given by
\[ H_{\fd_{d,\chi}}=
(-1)^{d-1}
\left( \sum_{\substack{\ell \in \Z_{\geqslant 1} \\\ell|(d,\chi)}}
\frac{1}{\ell^2} \Omega_{\frac{d}{\ell},\frac{\chi}{\ell}}^{\PP^2}
\right) z^{(0,-d)} \,, \]
\end{thm}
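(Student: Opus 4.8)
The plan is to bypass the recursive definition of $S(\fD^\iin_{\cl^+})$ entirely and instead read off the vertical ray functions from the geometric incarnation of the scattering diagram. First I would invoke \cite[Theorem 5.2.3]{bousseau2019scattering}, which identifies $S(\fD^\iin_{\cl^+})$ with the geometrically defined scattering diagram $\fD^{\PP^2}_{\cl^+}$ of \cite[Definition 2.5.8]{bousseau2019scattering}, whose rays $\fd_{\gamma,j}$ are indexed by Chern classes $\gamma$ together with components of the corresponding semistability walls, and whose attached functions are the rational intersection Donaldson--Thomas invariants of the moduli of $\sigma$-semistable objects of $\D^b(\PP^2)$ for $\sigma\in U$. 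Under this identification it suffices to compute the function attached to the vertical ray inside $\fD^{\PP^2}_{\cl^+}$.

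Next I would pin down the class and the moduli problem. By Lemma \ref{lem_vertical_rays}, the ray $\fd_{d,\chi}$ is the unique unbounded ray of class $m_{\fd_{d,\chi}}=(0,-d)$ lying on the vertical line $x=\frac{1}{d}(\chi-\frac{3}{2})$; in the indexing of $\fD^{\PP^2}_{\cl^+}$ it is attached to the single Chern class $\gamma=(0,d,\chi)$, so its function is supported on the single monomial $z^{(0,-d)}$, with no contribution from proper multiples $(0,2d,2\chi),\dots$ (which determine distinct classes, hence distinct rays on distinct vertical lines). By \cite[Lemma 6.1.1]{bousseau2019scattering}, along a vertical ray Bridgeland $\sigma$-stability for $\sigma\in U$ coincides with Gieseker stability, so the moduli space of $\sigma$-semistable objects of class $\gamma$ is exactly $M_{d,\chi}$, which is nonempty and satisfies $Ie(M_{d,\chi})\neq 0$ by \cite[Corollary 6.1.3]{bousseau2019scattering}. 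Hence the coefficient of $z^{(0,-d)}$ in $H_{\fd_{d,\chi}}$ is the rational intersection Donaldson--Thomas invariant attached to $M_{d,\chi}$, carrying the dimension sign $(-1)^{\dim M_{d,\chi}}$.

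To finish I would feed in Proposition \ref{prop_joyce_song}, which expresses the rational invariant as $\overline{\Omega}_{d,\chi}^{\PP^2}=\sum_{\ell\mid(d,\chi)}\frac{1}{\ell^2}\Omega_{d/\ell,\chi/\ell}^{\PP^2}$, and combine it with the dimension formula $\dim M_{d,\chi}=d^2+1$. Since $d^2\equiv d \pmod 2$, the sign $(-1)^{\dim M_{d,\chi}}=(-1)^{d^2+1}$ equals $(-1)^{d-1}$, which reproduces exactly the prefactor in the statement and yields
\[ H_{\fd_{d,\chi}}=(-1)^{d-1}\Bigl(\sum_{\ell\mid(d,\chi)}\frac{1}{\ell^2}\Omega_{d/\ell,\chi/\ell}^{\PP^2}\Bigr)z^{(0,-d)}. \]

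The main obstacle will be the convention bookkeeping concentrated in the second and third steps: one must verify that the ray functions of $\fD^{\PP^2}_{\cl^+}$, as normalized in \cite[Definition 2.5.8]{bousseau2019scattering} through the intersection cohomology of the moduli stacks $\fM_{d,\chi}$, really carry the \emph{rational} invariant $\overline{\Omega}$ with its $\frac{1}{\ell^2}$ multicover corrections rather than the integer invariant $\Omega$, and that the attached global sign is precisely the dimension sign $(-1)^{\dim M_{d,\chi}}$. This is exactly the point where the passage from moduli stacks to rational Donaldson--Thomas invariants via \cite{meinhardt2015donaldson} (used in Proposition \ref{prop_joyce_song}) must be reconciled with the wall-function conventions of \cite{bousseau2019scattering}; once this matching is in place, the computation is formal given the single-monomial reduction of the second step.
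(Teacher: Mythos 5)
Your proposal is correct and follows essentially the same route as the paper's proof: identify $S(\fD^\iin_{\cl^+})$ with the geometric diagram $\fD^{\PP^2}_{\cl^+}$ via \cite[Theorem 5.2.3]{bousseau2019scattering}, use \cite[Lemma 6.1.1]{bousseau2019scattering} to reduce Bridgeland stability to Gieseker stability along the unbounded vertical rays, and read the function off \cite[Definition 2.5.8]{bousseau2019scattering}, with the sign check $(-1)^{\dim M_{d,\chi}}=(-1)^{d^2+1}=(-1)^{d-1}$ (which the paper records equivalently as $(-1)^{(\gamma,\gamma)}=(-1)^{d}$ for $\gamma=(0,d,\chi)$). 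The only divergence is that the paper extracts the multicover sum $\sum_{\ell\mid(d,\chi)}\frac{1}{\ell^2}\Omega^{\PP^2}_{d/\ell,\chi/\ell}$ directly from the normalization in Definition 2.5.8 rather than detouring through the rational Donaldson--Thomas invariants and Proposition \ref{prop_joyce_song}, so your final step is a harmless redundancy rather than a gap.
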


\begin{proof}
For an unbounded vertical ray, Bridgeland $\sigma$-stability 
for $\sigma \in U$ reduces to Gieseker stability by 
\cite[Lemma 6.1.1]{bousseau2019scattering}.
Therefore, the result follows directly from
the definition 
\cite[Definition 2.5.8]{bousseau2019scattering} of $\fD^{\PP^2}_{\cl^+}$, 
and from the fact that $(-1)^{(\gamma,\gamma)}=(-1)^{d}$ for 
$\gamma=(0,d,\chi)$ (see \cite[Definition 2.1.1]{bousseau2019scattering}).

\end{proof}

\subsection{Scattering and genus-$0$ Gromov-Witten invariants of $(\PP^2,E)$} \label{section_vertical_gw}

In Theorem \ref{thm_gabele} below, we review the main result of 
\cite{gabele2019tropical} expressing the functions $H_{\fd_{d,\chi}}$ attached to the unbounded vertical rays 
$\fd_{d,\chi}$ in $S(\fD^\iin_{\cl^+})$ in terms of the relative Gromov-Witten invariants $N_{0,d}^{\PP^2/E,k}$ defined in \S \ref{section_takahashi_intro}.
We first introduce some notation
which are necessary to state Theorem 
\ref{thm_gabele}.

Recall from \S \ref{section_structure_proof_intro} that
for every $d \in \Z_{>0}$ and $\chi \in \Z$, we define
\[ \ell_{d,\chi} \coloneq \frac{d}{\gcd(d,\chi)} \in  \Z_{>0}\,.\]
For every abelian group $G$ and $x$ an element of $G$ of finite order divisible by $3$, we denote by $d(x)$ the smallest positive integer such that 
$(3d(x))x=0$ in $G$.
For every $\ell \in \Z_{>0}$, we denote by $r_\ell$ the number of elements $x \in \Z/(3\ell)$ such that $d(x)=\ell$.
For every $k, \ell \in \Z_{>0}$, we denote by
$s_{k,\ell}$ the number of $x=(a,b)
\in \Z/(3k) \times \Z/(3k)$
such that 
$d(x)=k$ and $d(a)=\ell$.







\begin{thm}\label{thm_gabele}
For every 
$d \in \Z_{\geqslant 1}$ and $\chi \in \Z$, we have 
\[ H_{\fd_{d,\chi}}
=\left( \sum_{\substack{k \in \Z_{\geqslant 1} \\ \ell_{d,\chi}|k|d}}
\frac{s_{k,\ell_{d,\chi}}}{r_{\ell_{d,\chi}}} N_{0,d}^{\PP^2/E,k}
\right)
z^{(0,-d)} \,.\]
\end{thm}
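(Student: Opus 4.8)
The plan is to deduce Theorem \ref{thm_gabele} from the tropical correspondence of \cite{gabele2019tropical} together with the comparison of scattering diagrams established in Proposition \ref{prop_comparing_scatterings}. Gabele works directly with the consistent structure $\mathcal{S}$ on the integral affine base $(B, \mathscr{P}, \varphi_{CPS})$, which is the tropicalization of a toric degeneration of $(\PP^2, E)$; applying the degeneration/decomposition formula of log Gromov--Witten theory, he expresses the genus-$0$ maximal contact invariants $N_{0,d}^{\PP^2/E,k}$ through the functions attached to the unbounded walls of $\mathcal{S}$. Via Proposition \ref{prop_comparing_scatterings}, which identifies the lift $\tilde{\mathcal{S}}$ of $\mathcal{S}$ to the universal cover $\bar{C}$ of $B - P_0$ with $S(\fD^\iin_{\cl^+})$, and via Lemma \ref{lem_vertical_rays}, which indexes the unbounded vertical rays by pairs $(d,\chi)$, it suffices to translate Gabele's formula for the wall functions on $B$ into a formula for the $H_{\fd_{d,\chi}}$ on $\bar{C}$.

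First I would recall precisely the form of Gabele's result: each unbounded wall of $\mathcal{S}$ of direction class $(0,-d)$ carries a function whose logarithm is, up to the monomial $z^{(0,-d)}$, a weighted sum of the invariants $N_{0,d}^{\PP^2/E,k}$, the index $k = d(p)$ recording the ``order'' of the contact point $p \in P_d$ reached by the corresponding tropical curves. The essential geometric input is that, under tropicalization, the contact point $p$ enters only through its image $p - p_0 \in \Pic^0(E)[3d] \cong (\Z/(3d))^2$, and more precisely through the single monodromy-visible coordinate of this torsion point; this visible coordinate is exactly what fixes the horizontal position $x = \tfrac{1}{d}(\chi - \tfrac{3}{2})$ of the ray $\fd_{d,\chi}$ on the universal cover.

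The core of the argument is then the combinatorial bookkeeping that produces the coefficient $s_{k,\ell_{d,\chi}}/r_{\ell_{d,\chi}}$. On $B$ a single vertical wall lifts to the whole family $\{\fd_{d,\chi}\}_\chi$ on $\bar{C}$, and the contact points $p \in P_d$ get distributed among these rays according to the order $d(a) = \ell_{d,\chi}$ of their visible coordinate $a$. I would argue that the residual monodromy $\pi_1(B - P_0) \cong \Z$ (acting as $\psi(1)^3$ in the language of \S\ref{section_stringy_kahler}) permutes transitively the rays sharing a fixed value $\ell_{d,\chi} = \ell$, so that each such ray must receive the same, possibly fractional, contribution; that contribution is the total number $s_{k,\ell}$ of torsion points $(a,b)$ with $d(a,b) = k$ and $d(a) = \ell$, divided by the number $r_\ell$ of admissible visible coordinates of order $\ell$. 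Summing over the orders $k$ with $\ell_{d,\chi} \mid k \mid d$ and weighting by $N_{0,d}^{\PP^2/E,k}$ yields the stated formula for $H_{\fd_{d,\chi}}$.

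The main obstacle is this last step: one must match the arithmetic of $(3d)$-torsion points on $E$ with the tropical geometry of the lifted walls carefully enough to pin down the exact counting factors $s_{k,\ell}$ and $r_\ell$, and in particular to justify that the monodromy symmetry forces the per-ray contribution to be the corresponding average. A secondary, more technical point is to confirm that the normalization of the wall functions $H_\fd$ used in $S(\fD^\iin_{\cl^+})$ agrees with Gabele's normalization after transport through Proposition \ref{prop_comparing_scatterings}, including the harmless discrepancy between the regulators $d_\sigma \varphi_{\bar{C}}$ and $\varphi_\sigma$ discussed there; once these are reconciled, the identification of the two expressions for $H_{\fd_{d,\chi}}$ is immediate.
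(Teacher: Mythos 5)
Your overall route is the paper's: Theorem \ref{thm_gabele} is obtained by quoting the main tropical correspondence theorem of \cite{gabele2019tropical}, which expresses the invariants $N_{0,d}^{\PP^2/E,k}$ through the wall functions of the consistent structure $\mathcal{S}$ on $B$, and then transporting the statement to $S(\fD^\iin_{\cl^+})$ via Proposition \ref{prop_comparing_scatterings} and Lemma \ref{lem_vertical_rays}. The paper's proof is essentially this citation, plus the remark that the precise coefficients come from Gabele's analysis (\cite[\S 6]{gabele2019tropical}) of how the $(3d)$-torsion points of $E$ tropicalize to torsion points of the circle at infinity of $B$, which under the identification become the points at infinity of the vertical lines carrying the rays $\fd_{d,\chi}$. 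Your normalization caveat is likewise handled inside Proposition \ref{prop_comparing_scatterings}.

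The one step you try to supply yourself is, however, wrong as stated: you derive the coefficient $s_{k,\ell_{d,\chi}}/r_{\ell_{d,\chi}}$ from transitivity of the residual monodromy $\pi_1(B-P_0)\cong\Z$ on the rays with a fixed value of $\ell_{d,\chi}$. That group is the deck group of $\bar{C}\rightarrow B-P_0$ and acts on vertical rays by $\fd_{d,\chi}\mapsto\fd_{d,\chi+3d}$, i.e.\ it acts trivially on the set of rays of $B$ itself; the finer $\Z/3$ symmetry $\psi(1)$ of \cref{section_stringy_kahler} only produces orbits $\{\chi,\chi+d,\chi+2d\}$ of size $3$, whereas the number of classes $\chi \bmod 3d$ with $\ell_{d,\chi}=\ell$ is $3\phi(\ell)$ ($\phi$ the Euler totient), so for $\ell\geqslant 3$ no symmetry of the tropical base connects all such rays, and the ``each ray receives the average'' step cannot be forced this way. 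The equidistribution is instead an elementary arithmetic fact about the tropicalization map $p\mapsto a$ on $\Pic^0(E)[3d]\cong(\Z/(3d))^2$: since $d(a,b)=\operatorname{lcm}(d(a),d(b))$, the number of $b$ with $d(a,b)=k$ depends on $a$ only through $d(a)=\ell$, so each ray receives exactly $s_{k,\ell}/r_{\ell}$ contact points of type $k$ (alternatively one can invoke the $SL(2,\Z)$ monodromy of the family of cubics acting on the torsion points, as in Lemma \ref{lem_monodromy} --- but not $\pi_1(B-P_0)$). This is precisely the content of \cite[\S 6]{gabele2019tropical} to which the paper defers; with that correction your argument coincides with the paper's.
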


\begin{proof}
This is the main result of 
\cite{gabele2019tropical}.
More precisely, the main result of 
\cite{gabele2019tropical} is a tropical correspondence theorem computing the Gromov-Witten invariants $N_{0,d}^{\PP^2/E,k}$ in terms of the consistent structure 
$\mathcal{S}$ of
\cite[Example 2.4]{cps}. It remains to use the equality 
$S(\fD^\iin_{\cl^+})=\tilde{\mathcal{S}}$ of 
Proposition \ref{prop_comparing_scatterings}
to conclude.

The precise form of the formula proved in 
\cite{gabele2019tropical} comes from a study of the tropicalization of the torsion points of $E$ into torsion points of the 
`circle at infinity' of the integral affine manifold with singularities $B$ on which $\mathcal{S}$ lives. We refer to \cite[\S 6]{gabele2019tropical} for details. 
Under the equality $S(\fD^\iin_{\cl^+})=\tilde{\mathcal{S}}$, these torsion points on the circle at infinity in $B$ become the intersection of the vertical lines $\fd_{d,\chi}$ with the horizontal line at infinity in $U$. 
\end{proof}

According to Theorem \ref{thm_gabele}, the unbounded vertical rays of the scattering diagram $S(\fD^\iin_{\cl^+})$ compute genus-$0$ Gromov-Witten invariants of $(\PP^2,E)$. 
By Theorem \ref{thm_boussseau}, they also compute intersection Euler characteristics of moduli spaces of Gieseker semistable rank $0$ sheaves on $\PP^2$.

One could ask what happens for unbounded non-vertical rays of the scattering diagram $S(\fD^\iin_{\cl^+})$. According to \cite{bousseau2019scattering}, they compute intersection Euler characteristics of moduli spaces of Gieseker semistable sheaves on $\PP^2$ of positive rank. To find a Gromov-Witten description seems more subtle. Intuitively, unbounded rays of $S(\fD^\iin_{\cl^+})$ should count holomorphic disks in $\PP^2-E$ with boundary on fibers of a Lagrangian torus fibration of 
$\PP^2-E$. Vertical rays correspond to disks with boundary which can be closed  `at infinity' to produce closed holomorphic curves meeting $E$ in a single point, whose counts produce the invariants $N_{0,d}^{\PP^2/E}$. On the other hand, non-vertical rays correspond to disks with boundary which cannot be closed to `infinity' (because not monodromy invariant).
Given a symplectic definition of the counts of such disks (for example following the techniques of \cite{lin2017correspondence,
lin2019correspondence} and using the hyperkäher metric of \cite{collins2019special}), we expect these counts to be computed by $S(\fD^\iin_{\cl^+})$.
The upshot would be a correspondence between counts of open holomorphic curves in 
$\PP^2 \setminus E$ and counts of higher-rank sheaves on $\PP^2$. Staying in algebraic geometry, these counts should be punctured log Gromov--Witten invariants \cite{abramovich2020punctured}
of the special fiber of the degeneration considered in \cite{gabele2019tropical}.

\subsection{Proof of Theorem \ref{thm_log_bps_local_bps_intro}}
\label{section_proof_thm_log_bps_local_bps}

The functions $H_{\fd_{d,\chi}}$ attached to the unbounded vertical rays
$\fd_{d,\chi}$ of the scattering diagram  $S(\fD^\iin_{\cl^+})$ are expressed in terms of the sheaf counting invariants $\Omega_{d',\chi'}^{\PP^2}$ by Theorem 
\ref{thm_boussseau} and in terms of the genus-$0$ Gromov-Witten invariants 
$N_{0,d}^{\PP^2/E,k}$ by Theorem \ref{thm_gabele}. Comparing these two expressions, we obtain the following result.

\begin{thm} \label{thm_sheaf_gw}
For every $d \in \Z_{\geqslant 1}$ and $\chi \in \Z$, we have 
\[ (-1)^{d-1}
 \sum_{\substack{\ell \in \Z_{\geqslant 1} \\\ell|(d,\chi)}}
\frac{1}{\ell^2} \Omega_{\frac{d}{\ell},\frac{\chi}{\ell}}^{\PP^2}
=
 \sum_{\substack{k \in \Z_{\geqslant 1} \\ \ell_{d,\chi}|k|d}}
\frac{s_{k,\ell_{d,\chi}}}{r_{\ell_{d,\chi}}} N_{0,d}^{\PP^2/E,k} \,.\]
\end{thm}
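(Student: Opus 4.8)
The plan is to observe that both Theorem \ref{thm_boussseau} and Theorem \ref{thm_gabele} furnish closed-form expressions for one and the same object, namely the function $H_{\fd_{d,\chi}}$ attached to the unbounded vertical ray $\fd_{d,\chi}$ of the scattering diagram $S(\fD^\iin_{\cl^+})$. The statement then falls out by equating these two expressions and comparing the coefficient of the monomial $z^{(0,-d)}$. First I would invoke Lemma \ref{lem_vertical_rays} to fix, for each $d \in \Z_{\geqslant 1}$ and $\chi \in \Z$, the unique unbounded vertical ray $\fd_{d,\chi}$ of class $(0,-d)$ lying on the line $x = \tfrac{1}{d}(\chi - \tfrac{3}{2})$; this guarantees that the ray appearing in Theorem \ref{thm_boussseau} and the ray appearing in Theorem \ref{thm_gabele} are literally the same ray, so that the attached function $H_{\fd_{d,\chi}}$ is unambiguous.

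The crucial input that makes the comparison meaningful is Proposition \ref{prop_comparing_scatterings}, the identification $S(\fD^\iin_{\cl^+}) = \tilde{\mathcal{S}}$. Without it, the sheaf-theoretic side (Theorem \ref{thm_boussseau}, resting on \cite{bousseau2019scattering}) and the Gromov--Witten side (Theorem \ref{thm_gabele}, resting on \cite{gabele2019tropical} via the consistent structure $\mathcal{S}$ of \cite{cps}) would live on a priori distinct scattering diagrams, and there would be no reason for the two attached functions to agree. Granting this identification, both theorems compute the attached function of the same ray in the same diagram, so their two right-hand sides must coincide.

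Equating the expressions from Theorem \ref{thm_boussseau} and Theorem \ref{thm_gabele} and reading off the coefficient of $z^{(0,-d)}$ then yields exactly
\[ (-1)^{d-1} \sum_{\substack{\ell \in \Z_{\geqslant 1} \\ \ell|(d,\chi)}} \frac{1}{\ell^2} \Omega_{\frac{d}{\ell},\frac{\chi}{\ell}}^{\PP^2} = \sum_{\substack{k \in \Z_{\geqslant 1} \\ \ell_{d,\chi}|k|d}} \frac{s_{k,\ell_{d,\chi}}}{r_{\ell_{d,\chi}}} N_{0,d}^{\PP^2/E,k}, \]
the sign $(-1)^{d-1}$ being carried over from Theorem \ref{thm_boussseau}.

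In this sense there is no genuine obstacle at this final step: all the substantive work has already been absorbed into the three ingredients, namely the sheaf-theoretic computation of Theorem \ref{thm_boussseau}, the tropical correspondence of Theorem \ref{thm_gabele}, and above all the scattering-diagram identification of Proposition \ref{prop_comparing_scatterings}. The only point demanding care is the bookkeeping: I would confirm that the indexing of the vertical rays by the pair $(d,\chi)$ is consistent between the two theorems and that the sign and normalization conventions match on both sides, so that the coefficient extraction of $z^{(0,-d)}$ is legitimate. This is precisely where one must be vigilant, since the two theorems originate from conceptually very different computations and it is only their common passage through the vertical rays of $S(\fD^\iin_{\cl^+})$ that lets them be compared.
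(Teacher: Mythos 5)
Your proposal is correct and is essentially identical to the paper's argument: Theorem \ref{thm_sheaf_gw} is obtained precisely by equating the two expressions for $H_{\fd_{d,\chi}}$ from Theorem \ref{thm_boussseau} and Theorem \ref{thm_gabele}, with Proposition \ref{prop_comparing_scatterings} and Lemma \ref{lem_vertical_rays} guaranteeing that the same ray is being computed in both. Nothing further is needed.
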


We will show that a simple reformulation of Theorem \ref{thm_sheaf_gw} 
gives Theorem \ref{thm_log_bps_local_bps_intro}.

\begin{lem} \label{lem_rewriting}
For every $d \in \Z_{\geqslant 1}$ and $\chi \in \Z$, we have 
\[ \sum_{\substack{\ell \in \Z_{\geqslant 1}\\
\ell|(d,\chi)}} \frac{1}{\ell^2} \Omega_{\frac{d}{\ell},\frac{\chi}{\ell}}^{\PP^2}
=\sum_{\substack{d'\in \Z_{\geqslant 1}\\
\ell_{d,\chi}|d'|d}} \frac{1}{(d/d')^2}
\Omega^{\PP^2}_{d',\chi \frac{d'}{d}}\,.\]
\end{lem}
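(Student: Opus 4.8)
The plan is to prove this identity purely by reindexing the left-hand sum and matching it term-by-term with the right-hand sum; no geometric input about the invariants $\Omega^{\PP^2}_{d,\chi}$ is needed, only the definition $\ell_{d,\chi}=d/\gcd(d,\chi)$ and an elementary divisibility bookkeeping.

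First I would substitute $d'\coloneq d/\ell$ in the left-hand side. As $\ell$ ranges over the divisors of $\gcd(d,\chi)$, the integer $d'$ ranges over those $d'$ dividing $d$ such that $d/d'$ divides $\gcd(d,\chi)$; under this substitution $\ell=d/d'$, $\chi/\ell=\chi d'/d$, and $1/\ell^2=1/(d/d')^2$, so the left-hand side becomes
\[ \sum_{\substack{d'|d \\ (d/d')|\gcd(d,\chi)}} \frac{1}{(d/d')^2}\, \Omega^{\PP^2}_{d',\,\chi \frac{d'}{d}}\,. \]
The condition $(d/d')\mid \gcd(d,\chi)$ already forces $\chi d'/d\in\Z$, so each summand is well-defined, and the summands here are literally those appearing on the right-hand side.

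The only remaining point, which I expect to be the one genuinely non-tautological step, is to identify the two index sets, i.e.\ to prove that for $d'\mid d$ one has
\[ (d/d')\mid \gcd(d,\chi) \iff \ell_{d,\chi}\mid d'\,. \]
Writing $g\coloneq\gcd(d,\chi)$ and $e\coloneq d/d'$, I would observe that both sides reduce to the single condition $d\mid g\,d'$: indeed $e\mid g \iff g d'/d\in\Z \iff d\mid g d'$, while $\ell_{d,\chi}\mid d' \iff (d/g)\mid d' \iff d\mid g d'$. (One could equally note that since $e\mid d$ holds automatically, $e\mid g=\gcd(d,\chi)$ is simply the condition $e\mid\chi$.) Having matched the index sets and seen that the summands coincide verbatim, the stated equality follows immediately. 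The entire argument is thus a reindexing plus this one divisibility equivalence, and there is no analytic or geometric obstacle to overcome.
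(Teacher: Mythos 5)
Your argument is correct and is essentially the same as the paper's: both proofs reindex via $d'=d/\ell$ and verify that the condition $\ell\mid\gcd(d,\chi)$ translates into $\ell_{d,\chi}\mid d'\mid d$, the paper doing so via the identity $d=\ell_{d,\chi}\gcd(d,\chi)$ and you via the equivalent condition $d\mid\gcd(d,\chi)\,d'$. No gap; your write-up is just a more explicit version of the paper's one-line verification.
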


\begin{proof}
It is a simple rewriting.
Indeed, we have by definition 
$d=\ell_{d,\chi} \gcd(d,\chi)$, so if $\ell|(d,\chi)$, then 
$\ell|\gcd(d,\chi)$ and so $d' \coloneq \frac{d}{\ell}$ satisfies $\ell_{d,\chi}|d'|d$, and conversely.
\end{proof}

Using Lemma \ref{lem_rewriting}, we rewrite Theorem \ref{thm_sheaf_gw} as
\[ 
 \sum_{\substack{d'\in \Z_{\geqslant 1}\\
\ell_{d,\chi}|d'|d}} \frac{1}{(d/d')^2}
\Omega^{\PP^2}_{d',\chi \frac{d'}{d}}
=
 \sum_{\substack{k \in \Z_{\geqslant 1} \\ \ell_{d,\chi}|k|d}}
\frac{s_{k,\ell_{d,\chi}}}{r_{\ell_{d,\chi}}} (-1)^{d-1} N_{0,d}^{\PP^2/E,k} \,.\]

Recall from \cref{section_takahashi_intro} that the relative BPS invariants 
$\Omega_{d,k}^{\PP^2/E}$ are defined in terms of the genus-$0$
Gromov-Witten invariants 
$N_{0,d}^{\PP^2/E,k}$ by the multicover formula
\[ (-1)^{d-1} N_{0,d}^{\PP^2/E,k}=\sum_{\substack{d' \in \Z_{\geqslant 1}\\ k|d'|d}} \frac{1}{(d/d')^2} \Omega^{\PP^2/E}_{d', k}\,.\]

Therefore, the above rewriting of Theorem \ref{thm_sheaf_gw} becomes
\[ 
 \sum_{\substack{d'\in \Z_{\geqslant 1}\\
\ell_{d,\chi}|d'|d}} \frac{1}{(d/d')^2}
\Omega^{\PP^2}_{d',\chi \frac{d'}{d}}
=\sum_{\substack{k \in \Z_{\geqslant 1} \\ \ell_{d,\chi}|k|d}}
\frac{s_{k,\ell_{d,\chi}}}{r_{\ell_{d,\chi}}} \sum_{\substack{d' \in \Z_{\geqslant 1}\\ k|d'|d}} \frac{1}{(d/d')^2} \Omega^{\PP^2/E}_{d', k} \,.\]
Changing the order of the two sums in the right-hand side, we obtain
\begin{equation} \label{eq_sums}
 \sum_{\substack{d'\in \Z_{\geqslant 1}\\
\ell_{d,\chi}|d'|d}} \frac{1}{(d/d')^2}
\Omega^{\PP^2}_{d',\chi \frac{d'}{d}}
= \sum_{\substack{d'\in \Z_{\geqslant 1}\\
\ell_{d,\chi}|d'|d}} \frac{1}{(d/d')^2}
\sum_{\substack{k \in \Z_{\geqslant 1} \\ \ell_{d,\chi}|k|d'}}
\frac{s_{k,\ell_{d,\chi}}}{r_{\ell_{d,\chi}}} 
\Omega^{\PP^2/E}_{d', k} \,.
\end{equation}
This equality holds for every $d \in \Z_{\geqslant 1}$ and $\chi \in \Z$. 

We prove Theorem \ref{thm_log_bps_local_bps_intro}, that is, 
\begin{equation} \label{eq_thm}
\Omega_{d,\chi}^{\PP^2}=\sum_{\ell_{d,\chi}|k|d}
\frac{s_{k,\ell_{d,\chi}}}{r_{\ell_{d,\chi}}} \Omega_{d,k}^{\PP^2/E}
\end{equation}
for every $d \in \Z_{\geq 1}$, $\chi \in \Z$, by
induction on $\gcd(d,\chi)$. 

If $\gcd(d,\chi)=1$, then $\ell_{d,\chi}=d$ and \eqref{eq_sums} reduces to 
\eqref{eq_thm}.

Let us assume that \eqref{eq_thm} holds for every $(d',\chi')$ with 
$\gcd(d',\chi') < \gcd(d,\chi)$. We can rewrite 
Theorem \ref{thm_log_bps_local_bps_intro} as
\begin{equation} \label{eq_two_sums}
\Omega_{d,\chi}^{\PP^2}+
 \sum_{\substack{d'\in \Z_{\geqslant 1}\\
\ell_{d,\chi}|d'|d \\ d'<d}} \frac{1}{(d/d')^2}
\Omega^{\PP^2}_{d',\chi \frac{d'}{d}}
= 
\sum_{\substack{k \in \Z_{\geqslant 1} \\ \ell_{d,\chi}|k|d}}
\frac{s_{k,\ell_{d,\chi}}}{r_{\ell_{d,\chi}}} 
\Omega^{\PP^2/E}_{d, k} 
+
\sum_{\substack{d'\in \Z_{\geqslant 1}\\
\ell_{d,\chi}|d'|d \\ d'<d}} \frac{1}{(d/d')^2}
\sum_{\substack{k \in \Z_{\geqslant 1} \\ \ell_{d,\chi}|k|d'}}
\frac{s_{k,\ell_{d,\chi}}}{r_{\ell_{d,\chi}}} 
\Omega^{\PP^2/E}_{d', k} \,.
\end{equation}
For every $d'$ with $\ell_{d,\chi}|d'|d$ and $d'<d$, we have $\gcd(d',\chi \frac{d'}{d})=\frac{d'}{d} \gcd(d,\chi) < \gcd(d,\chi)$, and so by the induction
hypothesis, we can apply \eqref{eq_thm} to get 
\[ \Omega_{d',\chi d'/d}^{\PP^2}=\sum_{\ell_{d',\chi d'/d}|k|d}
\frac{s_{k,\ell_{d',\chi d'/d}}}{r_{\ell_{d',\chi d'/d}}} \Omega_{d',k}^{\PP^2/E}\,.\]
But we have 
\[ \ell_{d',\chi \frac{d'}{d}}=\frac{d'}{\gcd(d',\chi \frac{d'}{d})}
= \frac{d}{\gcd(d,\chi)}=\ell_{d,\chi}\,,\]
and so 
\[ \Omega_{d',\chi d'/d}^{\PP^2}=\sum_{\ell_{d,\chi}|k|d}
\frac{s_{k,\ell_{d,\chi}}}{r_{\ell_{d,\chi}}} \Omega_{d',k}^{\PP^2/E}\,.\]
Therefore, the sums over $d'<d$ on each side of \eqref{eq_two_sums} cancel each other, and we obtain \eqref{eq_thm}. This ends the proof of Theorem \ref{thm_log_bps_local_bps_intro}.

\subsection{Proof of Theorem \ref{thm_local_relative_intro}}
\label{section_thm_local_relative_intro}

We explain how Theorem \ref{thm_local_relative_intro} follows 
from Theorem \ref{thm_log_bps_local_bps_intro}
and Theorem \ref{thm_joyce_conj_intro}.

By Theorem \ref{thm_joyce_conj_intro}, $\Omega_{d,\chi}^{\PP^2}$ does not depend on $\chi$. We denote $\Omega_d^{\PP^2}$ the common value of the invariants 
$\Omega_{d,\chi}^{\PP^2}$.
Therefore, we can rewrite 
\ref{thm_log_bps_local_bps_refined_intro}
as 
\[ \Omega_d^{\PP^2}=\sum_{\ell_{d,\chi}|k|d}
\frac{s_{k,\ell_{d,\chi}}}{r_{\ell_{d,\chi}}} \Omega_{d,k}^{\PP^2/E}\,.\]

This formula gives a recursive way to uniquely determine the invariants 
$\Omega_{d,k}^{\PP^2/E}$. 
Therefore, in order to show that 
$\Omega_{d,k}^{\PP^2/E}
=\frac{1}{3d} \Omega_d^{\PP^2}$, it is enough to show that this formula is the solution to the above set of recursive equations. So it is enough to show that 
\[\sum_{\substack{k\in \Z_{\leqslant 1}\\\ell_{d,\chi}|k|d}} 
\frac{s_{k,\ell_{d,\chi}}}{r_{\ell_{d,\chi}}}=3d\,.\]
The sum
\[\sum_{\substack{k\in \Z_{\leqslant 1}\\\ell_{d,\chi}|k|d'}} 
s_{k,\ell_{d,\chi}}\]
is the number of $x=(a,b)\in \Z/(3d)\times \Z/(3d)$
with $d(a)=\ell_{d,\chi}$. 
As
$r_{\ell_{d,\chi}}$ is the number of 
$a\in \Z/(3d)$ such that $d(a)=\ell_{d,\chi}$,
\[
\frac{1}{r_{\ell_{d,\chi}}}\sum_{\substack{k\in \Z_{\leqslant 1}\\\ell_{d,\chi}|k|d}} 
s_{k,\ell_{d,\chi}}\]
is the number of $x=(a,b) \in \Z/(3d)\times
\Z/(3d)$ with a given $a$, which is indeed $3d$. This ends the proof of
Theorem \ref{thm_local_relative_intro}.

\section{Refined sheaf counting and higher-genus Gromov-Witten invariants of $(\PP^2,E)$}
\label{section_higher_genus_refinement}

In this section, we prove Theorem \ref{thm_log_bps_local_bps_refined_intro} establishing 
a connection between the signed 
symmetrized Poincar\'e polynomials 
\[ \Omega_{d,\chi}^{\PP^2}(y^{\frac{1}{2}})\]
of moduli spaces of Gieseker semistable one-dimensional sheaves on $\PP^2$ and the relative BPS invariants 
\[ \Omega_{d,k}^{\PP^2/E}(\hbar)\] 
defined via 
higher-genus Gromov-Witten theory of $(\PP^2,E)$
in \cref{section_higher_genus_intro}. 
We follow the logic of the proof of Theorem 
\ref{thm_log_bps_local_bps_intro} given in 
\cref{section_takahashi_proof}, the scattering diagram $S(\fD^{\iin}_{\cl^+})$
being replaced by its quantum deformation
$S(\fD^{\iin}_{y^+})$.

By the main result of \cite{bousseau2019scattering} 
reviewed in \cref{section_vertical_sheaves_refined} the vertical rays of 
$S(\fD^{\iin}_{y^+})$ compute the sheaf invariants $\Omega_{d,\chi}^{\PP^2}(y^{\frac{1}{2}})$.
In \cref{section_vertical_gw_refined}, we show that the vertical rays of 
$S(\fD^{\iin}_{y^+})$ also compute the maximal contact higher-genus
Gromov-Witten invariants of $(\PP^2,E)$.
It is a higher-genus version of the genus-$0$ result of \cite{gabele2019tropical}
whose proof combines the degeneration argument of \cite{gabele2019tropical} with 
\cite{MR3904449, bousseau2018quantum_tropical}.  
Putting together these two geometric interpretations of the quantum scattering diagram $S(\fD^{\iin}_{y^+})$, we prove 
Theorem \ref{thm_log_bps_local_bps_refined_intro} in \cref{section_proof_thm_log_bps_local_bps_refined_intro}. Finally, we explain how to deduce
Theorem \ref{thm_local_relative_intro}
and Theorem \ref{thm_NS_intro}
in \cref{section_proof_section_proof_thm_log_bps_local_bps_refined_intro}
and \cref{section_proof_thm_NS_intro}.

\subsection{Scattering and  refined sheaf-counting on $\PP^2$}
\label{section_vertical_sheaves_refined}

This section is entirely parallel to 
\cref{section_vertical_sheaves}
(and more precisely recovers \cref{section_vertical_sheaves} in the limit 
$y^{\frac{1}{2}} \rightarrow 1$).

The quantum scattering diagram $S(\fD^\iin_{y^+})$ is defined 
in \cite{bousseau2019scattering} as the consistent completion of an explicit initial 
scattering diagram $\fD^\iin_{y^+}$ (see \cite[Definition 1.4.8]{bousseau2019scattering}). The scattering diagram $S(\fD^\iin_{\cl^+})$ used
in \cref{section_takahashi_proof} is the classical limit $y^{\frac{1}{2}}
\rightarrow 1$ of $S(\fD^\iin_{y^+})$.

On the other hand, viewing $U$ as a space of Bridgeland stability conditions
on the bounded derived category $\D^b(\PP^2)$ of coherent sheaves on $\PP^2$, we defined in \cite[Definition 2.5.7]{bousseau2019scattering} a quantum scattering diagram
$\fD^{\PP^2}_{y^+}$ using intersection Poincar\'e polynomials of 
moduli spaces of $\sigma$-semistable objects in $\D^b(\PP^2)$ for $\sigma \in U$.

One of the main results of \cite{bousseau2019scattering} is that the algorithmically defined scattering diagram $S(\fD^\iin_{y^+})$ and the geometrically defined 
scattering diagram $\fD^{\PP^2}_{y^+}$ coincide: by 
\cite[Theorem 5.2.3]{bousseau2019scattering}, we have
\[ S(\fD^\iin_{y^+}) = \fD^{\PP^2}_{y^+} \,.\]

We focus on the unbounded vertical rays of $S(\fD^\iin_{y^+})$. We show below that there are naturally indexed by $d \in \Z_{\geq 1}$ and $\chi \in \Z$, and that the function 
$H_{\fd_{d,\chi}^y}$ attached to the ray $\fd_{d,\chi}^y$ are expressed in terms of the intersection 
Poincar\'e polynomial of the moduli spaces of Gieseker semistable one-dimensional sheaves on $\PP^2$, supported on curves of degree $d'$ and with
holomorphic Euler characteristic $\chi'$, for $(d',\chi')$ dividing $(d,\chi)$.

\begin{lem} \label{lem_vertical_rays_refined}
Let $d$ be a positive integer. For every unbounded ray $\fd$
of $S(\fD^\iin_{y^+})$ of class $m_\fd=(0,-d)$, there exists a unique $\chi \in \Z$ such that $|\fd|$ is contained in the vertical line of equation 
\[ x=\frac{1}{d}\left(\chi-\frac{3}{2}\right)\,.\]

Moreover, for every $d \in \Z_{\geqslant 1}$ and $\chi \in \Z$, there exists a unique unbounded ray in $S(\fD^\iin_{y^+})$, that we denote by
$\fd_{d,\chi}^y$, of class $m_{\fd_{d,\chi}}
=(0,-d)$ and contained in the vertical line of equation $ x=\frac{1}{d}\left(\chi-\frac{3}{2}\right)$.
\end{lem}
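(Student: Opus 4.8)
The plan is to follow the proof of Lemma \ref{lem_vertical_rays} essentially verbatim, the only new input being a specialization argument to transfer existence and uniqueness from the classical diagram $S(\fD^\iin_{\cl^+})$ to its quantum refinement $S(\fD^\iin_{y^+})$. The key observation is that the \emph{placement} of a ray in a scattering diagram is governed entirely by the underlying central charge $Z$, which is literally the same object in the refined and unrefined settings; passing from $S(\fD^\iin_{\cl^+})$ to $S(\fD^\iin_{y^+})$ only replaces the attached wall-functions (intersection Euler characteristics by intersection Poincar\'e polynomials) and changes nothing about where the rays sit. Concretely, I would invoke the quantum analogue \cite[Definition 2.5.7]{bousseau2019scattering} of the definition used classically, together with $\fD^{\PP^2}_{y^+} = S(\fD^\iin_{y^+})$ from \cite[Theorem 5.2.3]{bousseau2019scattering}: every ray of $S(\fD^\iin_{y^+})$ is of the form $\fd_{\gamma,j}$ with $\gamma = (r,d,\chi) \in \Gamma$ and $j \in J_\gamma$, supported on the line $ry + dx + r + \frac{3}{2}d - \chi = 0$. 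For a ray of vertical class $m_\fd = (0,-d)$ this forces $r = 0$, so the line becomes $x = \frac{1}{d}\left(\chi - \frac{3}{2}\right)$, which determines $\chi$ uniquely and establishes the first assertion.

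For the existence and uniqueness of $\fd_{d,\chi}^y$, I would use that $S(\fD^\iin_{\cl^+})$ is the classical limit $y^{\frac{1}{2}} \to 1$ of $S(\fD^\iin_{y^+})$, so that a ray of the quantum diagram is nontrivial precisely when its wall-function is a nonzero element. For a vertical ray, Bridgeland $\sigma$-stability for $\sigma \in U$ reduces to Gieseker stability by \cite[Lemma 6.1.1]{bousseau2019scattering}, so the nontriviality of $\fd_{d,\chi}^y$ is equivalent to the nonvanishing of the relevant intersection Poincar\'e polynomials of the moduli spaces $M_{d',\chi'}$ with $(d',\chi') \mid (d,\chi)$. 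Each such $M_{d',\chi'}$ is nonempty by \cite{MR1263210}, and by \cite[Corollary 6.1.3]{bousseau2019scattering} its odd intersection cohomology vanishes; hence its intersection Poincar\'e polynomial has nonnegative coefficients and specializes at $y^{\frac{1}{2}} = 1$ to the strictly positive integer $Ie(M_{d',\chi'})$. In particular this polynomial is nonzero, so $\fd_{d,\chi}^y$ is nontrivial, and uniqueness is inherited from the placement computation above.

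The argument is almost pure bookkeeping, so I do not anticipate a serious obstacle; the one point that genuinely needs input from \cite{bousseau2019scattering} is the absence of cancellation under the specialization $y^{\frac{1}{2}} \to 1$. Without the positivity $Ie(M_{d,\chi}) \in \Z_{>0}$ of \cite[Corollary 6.1.3]{bousseau2019scattering}, a nontrivial quantum ray could a priori collapse to a trivial classical one, so that the correspondence of supports between $S(\fD^\iin_{y^+})$ and $S(\fD^\iin_{\cl^+})$ — which is what lets me quote Lemma \ref{lem_vertical_rays} directly — would fail. It is precisely this positivity that makes the refined statement no harder than its unrefined counterpart.
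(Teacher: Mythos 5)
Your proposal is correct and follows essentially the same route as the paper: the support of any ray is pinned down by the form $ry+dx+r+\tfrac{3}{2}d-\chi=0$ from \cite[Definition 2.5.7]{bousseau2019scattering} together with $S(\fD^\iin_{y^+})=\fD^{\PP^2}_{y^+}$, and existence follows from the reduction of Bridgeland to Gieseker stability for vertical rays, nonemptiness of $M_{d,\chi}$, and the nonvanishing of $\Omega_{d,\chi}^{\PP^2}(y^{\frac{1}{2}})$ forced by $Ie(M_{d,\chi})\in\Z_{>0}$. The closing observation that positivity of the Euler characteristic is what rules out cancellation is exactly the point the paper's proof relies on.
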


\begin{proof}
By \cite[Definition 2.5.7]{bousseau2019scattering}, every ray of $\fD^{\PP^2}_{y^+}$
is contained in a line of equation $ry+dx+r+\frac{3}{2}d-\chi=0$
for some $\gamma=(r,d,\chi)\in \Gamma$. 
If $\fd$ is of class 
$m_\fd=(0,-d)$, it means that $r=0$, and so 
$|\fd|$ is contained of the line of equation $dx+\frac{3}{2}d-\chi=0$.

For an unbounded vertical ray, Bridgeland $\sigma$-stability 
for $\sigma \in U$ reduces to Gieseker stability by 
\cite[Lemma 6.1.1]{bousseau2019scattering}.
Therefore, the existence of a ray 
$\fd_{d,\chi}$ as in the statement of 
Lemma \ref{lem_vertical_rays} follows from
the definition 
\cite[Definition 2.5.7]{bousseau2019scattering} of $\fD^{\PP^2}_{y^+}$, and from
the fact that, for every $d \in \Z_{>0}$ and 
$\chi \in \Z$, the moduli space 
$M_{d,\chi}$ of Gieseker semistable 
degree-$d$ one-dimensional sheaves on $\PP^2$ of holomorphic Euler characteristic $\chi$ is nonempty, 
and so, by \cite[Corollary 6.1.3]{bousseau2019scattering}, $Ie(M_{d,\chi})
\neq 0$ and in particular $\Omega_{d,\chi}^{\PP^2}(y^{\frac{1}{2}}) \neq 0$.
\end{proof}

\begin{thm}\label{thm_boussseau_refined}
For every $d \in \Z_{>0}$ and $\chi \in \Z$, the function 
$H_{\fd_{d,\chi}^y}$ attached to the unbounded vertical ray 
$\fd_{d,\chi}^y$ in the quantum scattering diagram $S(\fD^\iin_{y^+})$ is given by
\[ H_{\fd_{d,\chi}^y}=
(-1)^{d-1}
\left( \sum_{\substack{\ell \in \Z_{\geqslant 1} \\\ell|(d,\chi)}}
\frac{1}{\ell^2} \Omega_{\frac{d}{\ell},\frac{\chi}{\ell}}^{\PP^2}(y^{\frac{\ell}{2}})
\right) z^{(0,-d)} \,, \]
\end{thm}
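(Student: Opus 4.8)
The plan is to follow, mutatis mutandis, the proof of the unrefined statement Theorem \ref{thm_boussseau}, replacing every classical object by its quantum refinement and every intersection Euler characteristic by the corresponding intersection Poincaré polynomial. First I would reduce the computation from the algorithmically defined scattering diagram $S(\fD^\iin_{y^+})$ to the geometrically defined one $\fD^{\PP^2}_{y^+}$, using the refined version of the main comparison theorem, namely the equality $S(\fD^\iin_{y^+}) = \fD^{\PP^2}_{y^+}$ of \cite[Theorem 5.2.3]{bousseau2019scattering}. This is the only place where the hard consistency result of \cite{bousseau2019scattering} enters, and it is exactly parallel to the role played by $S(\fD^\iin_{\cl^+}) = \fD^{\PP^2}_{\cl^+}$ in the unrefined case.

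Next, I would exploit that $\fd_{d,\chi}^y$ is an unbounded vertical ray of class $(0,-d)$, as recorded in Lemma \ref{lem_vertical_rays_refined}. For such a ray and for $\sigma \in U$ lying on it, Bridgeland $\sigma$-(semi)stability of objects of $\D^b(\PP^2)$ of rank $0$ coincides with Gieseker (semi)stability of one-dimensional sheaves by \cite[Lemma 6.1.1]{bousseau2019scattering}; note that this statement is insensitive to the refinement, since it concerns only the stability condition, not the invariants attached to it. Consequently the moduli stacks contributing to $H_{\fd_{d,\chi}^y}$ are precisely the moduli stacks of Gieseker semistable one-dimensional sheaves on $\PP^2$ with invariants $(d/\ell,\chi/\ell)$ ranging over the common divisors of $(d,\chi)$. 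Feeding this into the definition \cite[Definition 2.5.7]{bousseau2019scattering} of $\fD^{\PP^2}_{y^+}$ expresses $H_{\fd_{d,\chi}^y}$ through the signed symmetrized intersection Poincaré polynomials of the $M_{d/\ell,\chi/\ell}$, that is, through the refined invariants $\Omega_{d/\ell,\chi/\ell}^{\PP^2}(y^{\ell/2})$, weighted by the multicover factor $1/\ell^2$ exactly as in the classical count.

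It then remains only to fix the overall sign, which I would treat identically to the unrefined case: for $\gamma = (0,d,\chi)$ the Euler pairing of \cite[Definition 2.1.1]{bousseau2019scattering} gives $(-1)^{(\gamma,\gamma)} = (-1)^d$, and combined with the sign conventions built into the symmetrized Poincaré polynomial $\Omega_{d,\chi}^{\PP^2}(y^{1/2}) = (-y^{1/2})^{-\dim M_{d,\chi}}\sum_j Ib_{2j}(M_{d,\chi})\,y^j$ --- whose classical limit $(-1)^{\dim M_{d,\chi}} Ie(M_{d,\chi})$ equals $(-1)^{d-1} Ie(M_{d,\chi})$ because $\dim M_{d,\chi}=d^2+1$ --- this yields the prefactor $(-1)^{d-1}$. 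As a consistency check one verifies that specializing $y^{1/2}\to 1$ returns the unrefined formula of Theorem \ref{thm_boussseau}.

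I do not expect a genuine obstacle, as each ingredient is the verbatim refinement of its unrefined counterpart. The one point deserving care is to confirm that the combination packaged by \cite[Definition 2.5.7]{bousseau2019scattering} is the ``partially refined'' rational invariant $\sum_\ell \tfrac{1}{\ell^2}\,\Omega_{d/\ell,\chi/\ell}^{\PP^2}(y^{\ell/2})$, carrying the classical multicover weight $1/\ell^2$ together with the genuinely refined primitive invariants $\Omega_{d/\ell,\chi/\ell}^{\PP^2}(y^{\ell/2})$ --- and in particular that it is not the refined rational Donaldson--Thomas invariant $\overline{\Omega}_{d,\chi}^{\PP^2}(y^{1/2})$ appearing in the proof of Proposition \ref{prop_joyce_song}, whose multicover weight instead carries the quantum denominator $1/(y^{\ell/2}-y^{-\ell/2})$.
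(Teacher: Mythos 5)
Your proposal is correct and follows essentially the same route as the paper: reduce to the geometric scattering diagram $\fD^{\PP^2}_{y^+}$ via $S(\fD^\iin_{y^+})=\fD^{\PP^2}_{y^+}$, use \cite[Lemma 6.1.1]{bousseau2019scattering} to identify Bridgeland stability with Gieseker stability on unbounded vertical rays, read off the formula from \cite[Definition 2.5.7]{bousseau2019scattering}, and fix the sign by $(-1)^{(\gamma,\gamma)}=(-1)^d$ for $\gamma=(0,d,\chi)$. Your closing caveat distinguishing the $1/\ell^2$-weighted combination from the refined rational invariant $\overline{\Omega}_{d,\chi}^{\PP^2}(y^{1/2})$ with quantum denominators is a legitimate point of care that the paper passes over silently, but it does not change the argument.
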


\begin{proof}
For an unbounded vertical ray, Bridgeland $\sigma$-stability 
for $\sigma \in U$ reduces to Gieseker stability by 
\cite[Lemma 6.1.1]{bousseau2019scattering}.
Therefore, the result follows directly from
the definition 
\cite[Definition 2.5.7]{bousseau2019scattering} of $\fD^{\PP^2}_{y^+}$, 
and from the fact that $(-1)^{(\gamma,\gamma)}=(-1)^{d}$ for 
$\gamma=(0,d,\chi)$ (see \cite[Definition 2.1.1]{bousseau2019scattering}).

\end{proof}

\subsection{Scattering and higher-genus Gromov-Witten invariants}
\label{section_vertical_gw_refined}

In Theorem \ref{thm_gabele_refined} below, we prove a higher-genus version of the main result of  
\cite{gabele2019tropical}, that is of
Theorem \ref{thm_gabele}. 
More precisely, we express the functions $H_{\fd_{d,\chi}^y}$ attached to the unbounded vertical rays 
$\fd_{d,\chi}^y$ in the quantum scattering diagram $S(\fD^\iin_{y^+})$ in terms of the
higher-genus relative Gromov-Witten invariants $N_{g,d}^{\PP^2/E,k}$ defined in \S \ref{section_higher_genus_intro}.

\begin{thm}\label{thm_gabele_refined}
For every 
$d \in \Z_{\geqslant 1}$ and $\chi \in \Z$, we have 
\[ H_{\fd_{d,\chi}^y}
=(-i)\left( 
\sum_{\substack{k\in \Z_{\geqslant 1}\\
\ell_{d,\chi}|k|d}}
\frac{s_{k,\ell_{d,\chi}}}{r_{\ell_{d,\chi}}}
\left(
\sum_{g \geqslant 0} N_{g,d}^{\PP^2/E,k}
\hbar^{g-1}
\right)
\right)z^{(0,-d)} \]
after the change of variables $y=e^{i\hbar}$.
\end{thm}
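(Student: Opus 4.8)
The plan is to follow the degeneration argument of \cite{gabele2019tropical} that established the genus-$0$ statement (Theorem \ref{thm_gabele}), but to carry the $\lambda_g$-twisted higher-genus invariants through the degeneration formula and to match the resulting local contributions with the wall-functions of the \emph{quantum} scattering diagram $S(\fD^\iin_{y^+})$ by means of the higher-genus/quantum tropical correspondence of \cite{MR3904449, bousseau2018quantum_tropical}. First I would take the same log smooth degeneration of $(\PP^2,E)$ from \cite[\S 1]{gabele2019tropical}, whose central fiber tropicalizes to the integral affine manifold $(B,\mathscr{P},\varphi_{CPS})$ of \cite{cps} underlying the scattering diagram $S(\fD^\iin_{\cl^+})$ by Proposition \ref{prop_comparing_scatterings}, and apply the decomposition formula of \cite{abramovich2017decomposition} to the invariants $N_{g,d}^{\PP^2/E,k}$ defined with the insertion of $(-1)^g\lambda_g$. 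Exactly as in genus $0$, this writes each $N_{g,d}^{\PP^2/E,k}$ as a sum over the rigid tropical curves in $B$ of the relevant type, each weighted by a product of local vertex contributions which are themselves higher-genus log Gromov-Witten invariants of toric surfaces carrying a $\lambda_g$ insertion.

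The decisive input is the local correspondence. By \cite{MR3904449, bousseau2018quantum_tropical}, after the change of variables $y=e^{i\hbar}$ and using $y^{\frac{1}{2}}-y^{-\frac{1}{2}}=2i\sin(\hbar/2)$, the genus-generating series of these $\lambda_g$-weighted local vertex invariants coincides with the wall-function of the corresponding local quantum scattering automorphism appearing in $S(\fD^\iin_{y^+})$. This is the higher-genus refinement that upgrades, vertex by vertex, the genus-$0$ local correspondence of \cite{MR2667135} used in \cite{gabele2019tropical}; the role of the $\lambda_g$ insertion is precisely to force the contributions to reorganize into the quantum integers built into $S(\fD^\iin_{y^+})$. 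Since the initial automorphisms of $S(\fD^\iin_{y^+})$ are the quantum versions of the initial data used in the genus-$0$ argument, and since consistency of the scattering diagram determines all wall-functions from the initial data, I would deduce that the functions $H_{\fd_{d,\chi}^y}$ on the unbounded vertical rays encode the full genus expansion of the $N_{g,d}^{\PP^2/E,k}$.

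The combinatorial weight $\frac{s_{k,\ell_{d,\chi}}}{r_{\ell_{d,\chi}}}$ is inherited unchanged from genus $0$: it records how the $(3d)$-torsion points of $\Pic^0(E)$ indexing the contact points (Lemma \ref{lem_contact_torsion}) tropicalize to the points on the circle at infinity of $B$ lying on the vertical line carrying $\fd_{d,\chi}^y$. This accounting is independent of the genus, so it can be imported verbatim from \cite[\S 6]{gabele2019tropical} once the tropical correspondence is in place, producing the same weights as in Theorem \ref{thm_gabele}. The overall prefactor $(-i)$ and the precise power of $\hbar$ are bookkeeping coming from the normalization of the quantum wall-functions in \cite{bousseau2019scattering} together with $y^{\frac{1}{2}}-y^{-\frac{1}{2}}=2i\sin(\hbar/2)$; specializing to the classical limit $y^{\frac{1}{2}}\to 1$ must recover Theorem \ref{thm_gabele}, which serves as a consistency check.

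The main obstacle will be establishing the local higher-genus vertex correspondence uniformly across the infinitely many local scatterings of $S(\fD^\iin_{y^+})$. In contrast with the finite scattering diagrams treated in \cite{MR2667135, bousseau2018quantum_tropical}, the present diagram is assembled from infinitely many local pieces, so one must verify that the results of \cite{bousseau2018quantum_tropical} apply at each vertex of $(B,\mathscr{P})$ and glue coherently; in particular, one must check that the $(-1)^g\lambda_g$ insertion localizes correctly under the degeneration so that the higher-genus decomposition formula reduces to a product of local $\lambda_g$-vertex contributions, each governed by the quantum scattering. Controlling this compatibility, rather than any single local computation, is the technical heart of the argument.
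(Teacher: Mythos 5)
Your proposal follows essentially the same route as the paper's proof: the degeneration of \cite[\S 1]{gabele2019tropical} with tropicalization $(B,\mathscr{P},\varphi_{CPS})$, the decomposition formula of \cite{abramovich2017decomposition}, the vanishing properties of $\lambda_g$ to reduce to genus distributed over the vertices of the rigid genus-$0$ tropical curves and to handle the gluing of non-torically transverse maps, the local vertex correspondence with quantum scattering diagrams from \cite{MR3904449, bousseau2018quantum_tropical}, and the genus-independent torsion-point accounting giving the weights $s_{k,\ell_{d,\chi}}/r_{\ell_{d,\chi}}$. The technical point you single out (uniform applicability of the local $\lambda_g$-vertex correspondence and of the gluing across the infinitely many local scatterings) is exactly where the paper leans on \cite[\S 6--7]{MR3904449}, so the two arguments match.
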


\begin{proof}
We describe briefly how to modify the genus-$0$ argument of
\cite{gabele2019tropical}.

As in \cite{gabele2019tropical}, we consider 
\[ \cX \coloneq \{ ([x:y:z:w], t) \in \PP(1,1,1,3) \times 
\A^1 |\, xyz-t^3(x^3+y^3+z^3+w)=0\}\,,\]
where $\PP(1,1,1,3)$ is the weighted projective space with weights $(1,1,1,3)$,
and 
\[ \pi \colon \cX \rightarrow \A^1 \]
\[ ([x:y:z:w],t) \mapsto t \,.\]
We view $\pi \colon \cX \rightarrow \A^1$ as a degeneration of $(\PP^2,E)$.
Indeed, for $t \neq 0$, we have $\cX_t \coloneq \pi^{-1}(t) \simeq \PP^2$ and $\cX_t$ intersects the toric boundary divisor 
$\PP^2=\{ w=0\}$ of $\PP(1,1,1,3)$
along the cubic curve 
\[ E_t: xyz-t^3(x^3+y^3+z^3)=0\,.\] 
The special fiber $\cX_0 \coloneq \pi^{-1}(0)$ breaks into the union of the three other toric divisors 
$\{x=0\}$, $\{y=0\}$, $\{z=0\}$, each one being isomorphic to the weighted projective plane 
$\PP(1,1,3)$. The cubic $E_0$
breaks into a triangle of lines.

It is shown in \cite{gabele2019tropical}
that the dual intersection complex of 
$\cX_0$ coincides with the polyhedral decomposition  $(B, \mathscr{P})$ of 
\cite[Example 2.4]{cps} and reviewed in \cref{section_cps}. The polyhedral decomposition 
$\cP$ contains three vertices, dual to the 
three components of $\cX_0$, defining a triangle $T$ dual to the triple intersection point of the three components of $\cX_0$. 
As each irreducible component of $\cX_0$ is toric, there is a natural way to define an integral affine structure on the complement $B_0$ in $B$ of three focus-focus singularities,
such that the local picture of $\cP$ near each of the three vertices is the fan of the corresponding toric component of 
$\cX_0$. 

The presence of the three singularities of the affine structure is related to the fact that the total space of $\cX$ has three ordinary double points and that the family $\cX \rightarrow \A^1$
is not log smooth at these points. Here, we view $\cX$ as endowed with the divisorial log structure defined by the vertical divisor $\cX_0$ and the horizontal divisor $(E_t)_{t \in \A^1}$. We view $\A^1$ as endowed with the divisorial log structure defined by $0 \in \A^1$.
Taking small resolutions of the three ordinary double points, we obtain a log smooth degeneration $\tilde{\cX} \rightarrow \A^1$.

For fixed $d \geqslant 1$, it is shown in \cite{gabele2019tropical} that there are finitely many genus-$0$ tropical curves in $(B,\mathscr{P})$ of type relevant for a tropical description of the invariants $N_{0,d}^{\PP^2,E}$. Picking a polyhedral decomposition containing all these tropical curves, we construct a log smooth degeneration 
$\tilde{\cX}_d \rightarrow \A^1$, obtained by blow-ups and base change from $\tilde{\cX} \rightarrow \A^1$.

We apply the decomposition formula of 
\cite{abramovich2017decomposition} to the degeneration $\tilde{\cX}_d \rightarrow \A^1$. The decomposition formula expresses the invariants 
$N_{g,d}^{\PP^2/E}$ as a sum over genus-$g$ rigid tropical curves.
As $N_{g,d}^{\PP^2/E}$ is defined in 
\cref{section_higher_genus_intro} by integration of the class $\lambda_g$, which vanishes on families of stable curves whose dual graph has positive genus (see
\cite[\S 3]{MR3904449} for a review), the only contributing rigid genus-$g$ tropical curves are obtained by distributing the genus on the vertices of the rigid genus-$0$ tropical curves.

The contribution of each rigid tropical curve can be expressed as gluing of contributions attached to the vertices of the tropical curve. Because $\tilde{\cX}_d$ is defined by a polyhedral decomposition
of $(B, \mathscr{P})$  
containing all the relevant 
tropical curves, all the relevant genus-$0$
stable log maps to the special fiber are torically transverse (that is not meeting the codimension $2$ strata of the special fiber) and this is used in
\cite{gabele2019tropical} to prove the gluing statement: gluing of torically transverse stable log maps can be reduced to the degeneration formula along a smooth divisor \cite{kim2018degeneration}.
In higher genus, things are more complicated: the moduli space of genus-$g$ stable log maps to the special fiber attached to a vertex of the tropical curve can contain non-torically tansverse stable maps. Nevertheless, it is possible to prove the gluing using the vanishing properties of $\lambda_g$ as in 
\cite[\S 6-7]{MR3904449}.

It remains to evaluate the contribution of each vertex and to show that the gluing procedure recovers the combinatorial description of the scattering diagram 
$S(\fD^{\iin}_{\cl^+})$ in genus-$0$ and of the quantum scattering diagram 
$S(\fD^{\iin}_{y^+})$
in higher-genus. In genus-$0$, it is shown in \cite{gabele2019tropical} that this follows from the main result of 
\cite{MR2667135} which establishes the relation between genus-$0$ log Gromov-Witten invariants of toric surfaces and scattering diagrams. Similarly, the higher-genus result follows from the main result of 
\cite{bousseau2018quantum_tropical} which establishes the relation between higher-genus log Gromov-Witten invariants with 
$\lambda_g$-insertion of toric surfaces and quantum scattering diagrams.
\end{proof}

\subsection{Proof of Theorem \ref{thm_log_bps_local_bps_refined_intro}}
\label{section_proof_thm_log_bps_local_bps_refined_intro}
The functions $H_{\fd_{d,\chi}^y}$ attached to the unbounded vertical rays
$\fd_{d,\chi}^y$ of the quantum scattering diagram  $S(\fD^\iin_{y^+})$ are expressed in terms of the sheaf counting invariants $\Omega_{d',\chi'}^{\PP^2}(y^{\frac{1}{2}})$ by Theorem 
\ref{thm_boussseau_refined} and in terms of the Gromov-Witten invariants 
$N_{g,d}^{\PP^2/E,k}$ by Theorem \ref{thm_gabele_refined}. Comparing these two expressions, we obtain the following result.

\begin{thm} \label{thm_sheaf_gw_refined}
For every $d \in \Z_{\geqslant 1}$ and $\chi \in \Z$, we have 
\[ (-1)^{d-1}
\sum_{\substack{\ell \in \Z_{\geqslant 1} \\\ell|(d,\chi)}}
\frac{1}{\ell^2} \Omega_{\frac{d}{\ell},\frac{\chi}{\ell}}^{\PP^2}(y^{\frac{\ell}{2}})
=
(-i)
\sum_{\substack{k\in \Z_{\geqslant 1}\\
\ell_{d,\chi}|k|d}}
\frac{s_{k,\ell_{d,\chi}}}{r_{\ell_{d,\chi}}}
\left(
\sum_{g \geqslant 0} N_{g,d}^{\PP^2/E,k}
\hbar^{g-1}
\right)\]
after the change of variables $y=e^{i\hbar}$.
\end{thm}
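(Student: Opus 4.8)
The plan is to read off the identity by comparing the two independent computations of the function $H_{\fd_{d,\chi}^y}$ attached to the single unbounded vertical ray $\fd_{d,\chi}^y$ of the quantum scattering diagram $S(\fD^\iin_{y^+})$. The existence and uniqueness of this ray is guaranteed by Lemma \ref{lem_vertical_rays_refined}, so its associated function is a well-defined object with which both the sheaf-theoretic and the Gromov-Witten computation must agree.

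First I would invoke Theorem \ref{thm_boussseau_refined}, which expresses $H_{\fd_{d,\chi}^y}$ on the sheaf side as
\[ H_{\fd_{d,\chi}^y}=
(-1)^{d-1}
\left( \sum_{\substack{\ell \in \Z_{\geqslant 1} \\\ell|(d,\chi)}}
\frac{1}{\ell^2} \Omega_{\frac{d}{\ell},\frac{\chi}{\ell}}^{\PP^2}(y^{\frac{\ell}{2}})
\right) z^{(0,-d)} \,. \]
Then I would invoke Theorem \ref{thm_gabele_refined}, which expresses the very same function on the Gromov-Witten side, after the substitution $y=e^{i\hbar}$, as
\[ H_{\fd_{d,\chi}^y}
=(-i)\left(
\sum_{\substack{k\in \Z_{\geqslant 1}\\
\ell_{d,\chi}|k|d}}
\frac{s_{k,\ell_{d,\chi}}}{r_{\ell_{d,\chi}}}
\left(
\sum_{g \geqslant 0} N_{g,d}^{\PP^2/E,k}
\hbar^{g-1}
\right)
\right)z^{(0,-d)} \,. \]
Equating the two right-hand sides and cancelling the common monomial $z^{(0,-d)}$, whose coefficient is uniquely determined by the ray, yields precisely the asserted equality of power series after $y=e^{i\hbar}$. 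Note that no sign bookkeeping is needed at this stage: the prefactor $(-1)^{d-1}$ survives on the left and $(-i)$ on the right exactly as in the statement.

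The proof of the theorem itself is therefore purely formal; all of the mathematical content has been pushed into the two input theorems. The genuine difficulty — already discharged by the time we reach this statement — lies in Theorem \ref{thm_gabele_refined}, namely in upgrading the genus-$0$ tropical correspondence of \cite{gabele2019tropical} to a higher-genus statement with $\lambda_g$-insertion. That upgrade rests on the degeneration and gluing arguments of \cite{abramovich2017decomposition}, on the vanishing properties of $\lambda_g$, and on the identification of the vertex contributions with the quantum scattering computations of \cite{MR3904449, bousseau2018quantum_tropical}. By contrast, the classical limit $y^{\frac{1}{2}} \to 1$ of the present comparison recovers Theorem \ref{thm_sheaf_gw}, which I would cite as a consistency check on the signs and on the change of variables $y=e^{i\hbar}$.
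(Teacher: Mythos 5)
Your proposal is correct and is essentially identical to the paper's own proof: the paper likewise obtains Theorem \ref{thm_sheaf_gw_refined} by equating the two expressions for the function $H_{\fd_{d,\chi}^y}$ attached to the unique unbounded vertical ray $\fd_{d,\chi}^y$, namely the sheaf-side expression of Theorem \ref{thm_boussseau_refined} and the Gromov--Witten-side expression of Theorem \ref{thm_gabele_refined}. Your additional remarks on where the real content lies and on the classical-limit consistency check match the paper's framing.
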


Theorem \ref{thm_sheaf_gw_refined} implies Theorem \ref{thm_log_bps_local_bps_refined_intro}
in the same way that Theorem \ref{thm_sheaf_gw} implies Theorem \ref{thm_log_bps_local_bps_intro}
(see \cref{section_proof_thm_log_bps_local_bps}
).

\subsection{Proof of Theorem \ref{thm_equiv_conj_intro}}
\label{section_proof_section_proof_thm_log_bps_local_bps_refined_intro}
In this section, we explain how Theorem \ref{thm_equiv_conj_intro}
follows from Theorem \ref{thm_log_bps_local_bps_refined_intro}.

According to Theorem \ref{thm_log_bps_local_bps_refined_intro}, we have 
\[ 
\Omega_{d,\chi}^{\PP^2}(y^{\frac{1}{2}})=\sum_{\ell_{d,\chi}|k|d}
\frac{s_{k,\ell_{d,\chi}}}{r_{\ell_{d,\chi}}} \Omega_{d,k}^{\PP^2/E}(\hbar)\,\]
after the change of variables 
$y=e^{i\hbar}$.

If Conjecture \ref{conj_tak_higher_genus_bps_intro} holds, then $\Omega_{d,k}^{\PP^2/E}(\hbar)$ does not depend on $k$, and so, denoting by 
$\Omega_d^{\PP^2/E}(\hbar)$ the common value of the invariants $\Omega_{d,k}^{\PP^2/E}(\hbar)$, we have 
\[ \Omega_{d,\chi}^{\PP^2}(y^{\frac{1}{2}})=
\Omega_d^{\PP^2/E}(\hbar)
\sum_{\ell_{d,\chi}|k|d}
\frac{s_{k,\ell_{d,\chi}}}{r_{\ell_{d,\chi}}}\,.\]
We saw in \cref{section_thm_local_relative_intro} that 
\[ \sum_{\ell_{d,\chi}|k|d}
\frac{s_{k,\ell_{d,\chi}}}{r_{\ell_{d,\chi}}}=3d\,.\]
Therefore, we have 
\[ 
\Omega_{d,\chi}^{\PP^2}(y^{\frac{1}{2}})=3d\,
\Omega_d^{\PP^2/E}(\hbar)\,.\]
In particular, $\Omega_{d,\chi}^{\PP^2}$ does not depend on $\chi$ and Conjecture 
\ref{conj_refined_joyce_conj_intro} holds.

Conversely, let us assume that Conjecture 
\ref{conj_refined_joyce_conj_intro} holds. Then $\Omega_{d,\chi}^{\PP^2}$ does not depend on $\chi$ and we denote by 
$\Omega_d^{\PP^2}$ the common value of the invariants $\Omega_{d,\chi}^{\PP^2}$.
We can argue as in \cref{section_thm_local_relative_intro}.
Theorem \ref{thm_log_bps_local_bps_refined_intro} gives a recursive way to determine the invariants $\Omega_{d,k}^{\PP^2}$ in terms of the invariants $\Omega_d^{\PP^2}$. 
Therefore, in order to show that 
$\Omega_{d,k}^{\PP^2/E}(\hbar)=
\frac{1}{3d}
\Omega_d^{\PP^2}(y^{\frac{1}{2}})$, and in particular that Conjecture \ref{conj_tak_higher_genus_bps_intro} holds, it is enough to show that this formula is the solution of the above set of recursive equations. As in \cref{section_thm_local_relative_intro}, this result follows from 
\[ \sum_{\ell_{d,\chi}|k|d}
\frac{s_{k,\ell_{d,\chi}}}{r_{\ell_{d,\chi}}}=3d\,.\]

\subsection{Proof of Theorem \ref{thm_NS_intro}}
\label{section_proof_thm_NS_intro}

We explain how Theorem \ref{thm_NS_intro}
follows from Theorem \ref{thm_log_bps_local_bps_refined_intro}.

By Theorem \ref{thm_log_bps_local_bps_refined_intro}, we have
\[ 
\Omega_{d,\chi}^{\PP^2}(y^{\frac{1}{2}})=\sum_{\ell_{d,\chi}|k|d}
\frac{s_{k,\ell_{d,\chi}}}{r_{\ell_{d,\chi}}} \Omega_{d,k}^{\PP^2/E}(\hbar)\,,\]
so 
\[ \Omega_d^{\PP^2}(y^{\frac{1}{2}})
\coloneq \frac{1}{d} \sum_{\chi\mod d} \Omega_{d,\chi}^{\PP^2}(y^{\frac{1}{2}})
=\frac{1}{3d} \sum_{k|d} a_{d,k}
 \Omega_{d,k}^{\PP^2/E}(\hbar) \,,\]
where 
\[ a_{d,k} \coloneq 3
\sum_{\substack{ \chi \mod d \\
\ell_{d,\chi}|k}}
\frac{s_{k,\ell_{d,\chi}}}{r_{\ell_{d,\chi}}}\,\]
is the number of $x \in \Z/(3d)
\times \Z/(3d)$ such that $d(x)=k$.
The factor $3$ comes from the fact that we sum over $\chi \mod d$ and not over $\chi \mod 3d$.

Therefore, we have
\[ \bar{F}^{NS}(y^{\frac{1}{2}},Q) 
\coloneq i \sum_{d \in \Z_{>0}}
\sum_{\ell \in \Z_{>0}} \frac{1}{\ell} \frac{\Omega_{d}(y^{\frac{\ell}{2}})}{
y^{\frac{\ell}{2}}-y^{-\frac{\ell}{2}}} Q^{\ell d} 
=
i \sum_{d \in \Z_{>0}} \frac{1}{3d}
\sum_{k|d} a_{d,k} 
\sum_{\ell \in \Z_{>0}}
\frac{1}{\ell} \frac{
\Omega_{d,k}^{\PP^2/E}(\ell \hbar)}{
y^{\frac{\ell}{2}}-y^{-\frac{\ell}{2}}} Q^{\ell d} \,.\]
Using the definition of $\Omega_{d,k}^{\PP^2/E}(\hbar)$ given in \cref{section_higher_genus_intro}, we obtain
\[ \bar{F}^{NS}(y^{\frac{1}{2}},Q) 
= i \sum_{g \geqslant 0}
\sum_{d \in \Z_{>0}} \frac{1}{3d}
(-1)^{d-1} \sum_{k|d} a_{d,k} N_{g,d}^{\PP^2/E,k} \hbar^{2g-1} Q^{d}\,.\]
As 
\[ N_{g,d}^{\PP^2/E}=\sum_{p \in P_d}
N_{g,d}^{\PP^2/E,p} 
=\sum_{k|d} a_{d,k} N_{g,d}^{\PP^2/E,k}\,, \]
we obtain
\[ \bar{F}^{NS}(y^{\frac{1}{2}},Q) 
= i \sum_{g \geqslant 0}
\sum_{d \in \Z_{>0}} \frac{1}{3d}
(-1)^{d-1} N_{g,d}^{\PP^2,E}
\hbar^{2g-1} Q^{d}\,,\]
and this ends the proof of Theorem 
\ref{thm_NS_intro}.

\section{Further results}
\label{section_more_gw}

\subsection{Compatibility with the local-relative correspondence}
\label{section_local_relative}
Recall that once we know Theorem 
\ref{thm_joyce_conj_intro}, we write 
$\Omega_d^{\PP^2}$ for 
$\Omega_{d,\chi}^{\PP^2}$, and that 
once we know Theorem \ref{thm_takahashi_bps}, we write 
$\Omega_d^{\PP^2/E}$ for 
$\Omega_{d,k}^{\PP^2/E}$.
According to Theorem \ref{thm_local_relative_intro}, we have
\[ \Omega_d^{\PP^2/E} = \frac{1}{3d}
\Omega_{d}^{\PP^2}\,.\]
We can use this result to give a new proof of the local-relative correspondence of \cite{MR3948687} relating 
genus-$0$ Gromov-Witten invariants 
$N_{0,d}^{\PP^2/E}$ of $(\PP^2,E)$ and 
genus-$0$ Gromov-Witten invariants 
$N_{0,d}^{K_{\PP^2}}$ of $K_{\PP^2}$.

\begin{thm}\label{thm_local_relative}
For every $d \in \Z_{>0}$, we have 
\[N_{0,d}^{\PP^2/E}=(-1)^{d-1}3d N_{0,d}^{K_{\PP^2}}\,.\]
\end{thm}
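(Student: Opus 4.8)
The plan is to reduce the statement to the identities relating the two kinds of BPS invariants already established in this paper, and then to lift the resulting BPS-level equality to the level of Gromov--Witten invariants by means of the two multicover formulas of \cref{section_relative_GW}. A pleasant feature of this route is that it reproves the local-relative correspondence in a way that is logically independent of \cite{MR3948687}: every ingredient comes from the sheaf/Gromov--Witten correspondence developed in the previous sections.

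First I would record the BPS-level correspondence. Recall from \cref{section_takahashi_intro} that $n_{0,d}^{\PP^2/E}=(3d)^2\,\Omega_d^{\PP^2/E}$, an identity that follows from \eqref{eq_total_gw}, the definition of the relative BPS invariants, and the $k$-independence of Theorem \ref{thm_takahashi_bps}. Combining this with Theorem \ref{thm_local_relative_intro}, which gives $\Omega_d^{\PP^2/E}=\frac{1}{3d}\,\Omega_d^{\PP^2}$, and with Proposition \ref{prop_katz_conj}, which identifies $\Omega_d^{\PP^2}=n_{0,d}^{K_{\PP^2}}$, I obtain
\[ n_{0,d}^{\PP^2/E}=(3d)^2\cdot\frac{1}{3d}\,\Omega_d^{\PP^2}=3d\,\Omega_d^{\PP^2}=3d\,n_{0,d}^{K_{\PP^2}} \]
for every $d\in\Z_{>0}$.

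Next I would propagate this to the Gromov--Witten invariants. Starting from the multicover formula defining $n_{0,d}^{\PP^2/E}$ and substituting the BPS-level identity $n_{0,m}^{\PP^2/E}=3m\,n_{0,m}^{K_{\PP^2}}$ into each term with $m=d/\ell$, the computation reads
\[ (-1)^{d-1}N_{0,d}^{\PP^2/E}=\sum_{\ell\mid d}\frac{1}{\ell^2}\,n_{0,d/\ell}^{\PP^2/E}=\sum_{\ell\mid d}\frac{1}{\ell^2}\cdot 3\frac{d}{\ell}\,n_{0,d/\ell}^{K_{\PP^2}}=3d\sum_{\ell\mid d}\frac{1}{\ell^3}\,n_{0,d/\ell}^{K_{\PP^2}}=3d\,N_{0,d}^{K_{\PP^2}}, \]
where the final equality is exactly the multicover formula defining the genus-$0$ Gopakumar--Vafa invariants $n_{0,d}^{K_{\PP^2}}$. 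Multiplying by $(-1)^{d-1}$ gives $N_{0,d}^{\PP^2/E}=(-1)^{d-1}3d\,N_{0,d}^{K_{\PP^2}}$, as required.

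The argument is short precisely because all the geometry has already been packaged into Theorem \ref{thm_local_relative_intro} and Proposition \ref{prop_katz_conj}; the only content left is the elementary bookkeeping with the two multicover formulas, whose differing powers ($\ell^{-2}$ on the relative side, $\ell^{-3}$ on the local side) are compensated exactly by the factor $3\frac{d}{\ell}$ supplied by the BPS-level correspondence. The one point I would watch is to ensure that \cite{MR3948687} is not invoked at any stage, so that the statement is genuinely reestablished rather than assumed; this is automatic here, since the BPS-level identity is extracted entirely from the sheaf-theoretic invariant $\Omega_d^{\PP^2}$ and its Gopakumar--Vafa interpretation.
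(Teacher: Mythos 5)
Your proposal is correct and follows essentially the same route as the paper's proof: both rest on the combination of Theorem \ref{thm_local_relative_intro}, Proposition \ref{prop_katz_conj}, and the two multicover formulas, with the $(3d)^2$ count of torsion points supplying the identity $n_{0,d}^{\PP^2/E}=(3d)^2\Omega_d^{\PP^2/E}$. The only cosmetic difference is that you factor the argument through the BPS-level statement $n_{0,d}^{\PP^2/E}=3d\,n_{0,d}^{K_{\PP^2}}$ before converting to Gromov--Witten invariants, whereas the paper carries out the same bookkeeping directly on $N_{0,d}^{\PP^2/E}$ expanded over the sets $P_{d,k}$.
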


\begin{proof}
Using the notation introduced in 
\cref{section_takahashi_intro}, denote by 
$P_{d,k}$ the set of $p \in P_d$ 
with $d(p)=k$, and $|P_{d,k}|$
the cardinal of $P_{d,k}$.
For every positive integer $d$, we have 
\[ N_{0,d}^{\PP^2/E}
=\sum_{\substack{k \in \Z_{\geqslant 1}\\
k|d}} |P_{d,k}|N_{0,d}^{\PP^2/E,k}
=(-1)^{d-1}\sum_{\substack{k \in \Z_{\geqslant 1}\\
k|d}} |P_{d,k}|
\sum_{\substack{d'\in \Z_{\geqslant 1}\\
k|d'|d}} \frac{1}{(d/d')^2}\Omega_{d'}^{\PP^2/E}\]
\[=(-1)^{d-1} \sum_{\substack{d'\in \Z_{\geqslant 1}\\
d'|d}} \frac{1}{(d/d')^2}\Omega_{d'}^{\PP^2/E}\sum_{\substack{k \in \Z_{\geqslant 1}\\
k|d'}} |P_{d,k}|\,.\]
Using \[\sum_{\substack{k \in \Z_{\geqslant 1}\\
k|d'}} |P_{d,k}|=(3d')^2\,,\]
we obtain 
\[ N_{0,d}^{\PP^2/E}=(-1)^{d-1} \sum_{\substack{d'\in \Z_{\geqslant 1}\\
d'|d}} \frac{(3d')^2}{(d/d')^2}\Omega_{d'}^{\PP^2/E}\,.\]
 
By Theorem 
\ref{thm_local_relative_intro},
we have $(3d')\Omega_{d'}^{\PP^2/E}
=\Omega_{d'}^{\PP^2}$ and so
\[ N_{0,d}^{\PP^2/E}=(-1)^{d-1} \sum_{\substack{d'\in \Z_{\geqslant 1}\\
d'|d}} \frac{3d'}{(d/d')^2}\Omega_{d'}^{\PP^2}=(-1)^{d-1}3d\sum_{\substack{d'\in \Z_{\geqslant 1}\\
d'|d}} \frac{1}{(d/d')^3}\Omega_{d'}^{\PP^2} \,.\]
On the other hand, we have 
\[ N_{0,d}^{K_{\PP^2}}=\sum_{\substack{d'\in \Z_{\geqslant 1}\\
d'|d}} \frac{1}{(d/d')^3} n_{0,d'}^{K_{\PP^2}}\,,\]
where  $n_{0,d}^{K_{\PP^2}}$ is the genus $0$ degree $d$ Gopakumar-Vafa invariant of 
$K_{\PP^2}$ defined through Gromov-Witten theory. Finally, by Proposition
\ref{prop_katz_conj}, we have that Katz's conjecture
(\cite[Conjecture 2.3]{MR2420017}) is true for $K_{\PP^2}$, that is, for every $d \in \Z_{>0}$, we have
\[ n_{0,d}^{K_{\PP^2}}=\Omega_{d}^{\PP^2}\,.\]
This concludes the proof.
\end{proof}

 Our proof of Theorem \ref{thm_local_relative} is quite complicated, using essentially the full content of the present paper. 
 In particular, this proof contains a long detour through sheaf counting, whereas the simple degeneration argument of \cite{MR3948687} gives directly a proof staying in Gromov-Witten theory. 
 Nevertheless, we stress that the two proofs are logically independent, and combining them, we obtain a full circle of relations whose consistency can be appreciated:

\begin{center}
\begin{tikzpicture}[>=angle 90]
\matrix(a)[matrix of math nodes,
row sep=2em, column sep=4em,
text height=1.5ex, text depth=0.25ex]
{&&&GW(K_{\PP^2})\\
&&\text{\cite{MR3948687}}& \\
GW(\PP^2/E)&&
&\text{  \cite[Corollary A.7]{MR3861701}}\\
&&\text{Theorem \ref{thm_local_relative_intro}}&\\
&&&DT(K_{\PP^2})\\};
\path[<-](a-3-1) edge (a-2-3);
\path[->](a-2-3) edge (a-1-4);
\path[<-](a-1-4) edge  (a-3-4);
\path[->](a-3-4) edge  (a-5-4);
\path[<-](a-3-1) edge (a-4-3);
\path[->](a-4-3) edge (a-5-4);
\end{tikzpicture}
\end{center}

\subsection{A conjectural real sheaves/Gromov-Witten correspondence}
\label{section_real_gw}

As $K_{\PP^2}$ is toric, $K_{\PP^2}$ has a natural real structure, whose real locus 
$K_{\PP^2}^{\R}$ is the real projective plane 
$\R \PP^2$ in restriction to $\PP^2$.
Arguing as in \cite{MR2425184}, we can define genus-$0$ degree-$d$ real Gromov-Witten invariants of $K_{\PP^2}$:
\[ N_{0,d}^{K_{\PP^2},\R} \in \Q\,.\]
We have $N_{0,d}^{K_{\PP^2,\R}}=0$ if $d$ is even, and we choose the Spin structure on $\R \PP^2$ involved in the definition of 
$N_{0,d}^{K_{\PP^2},\R}$ such that 
$N_{0,1}^{K_{\PP^2},\R}=1$
(a different choice changes the invariants 
$N_{0,d}^{K_{\PP^2},\R}$ by a global sign independent of $d$).
We define genus-$0$ degree-$d$ real Gopakumar-Vafa invariants 
$n_{0,d}^{K_{\PP^2},\R}$ by:
$n_{0,d}^{K_{\PP^2},\R}=0$ if $d$ is even, and by
\[ N_{0,d}^{K_{\PP^2},\R}
=\sum_{\substack{\ell \geqslant 1\\\ell|d}} \frac{1}{\ell^2} 
n_{0,\frac{d}{\ell}}^{K_{\PP^2},\R}\,,\]
for every $d$ odd.

We recall from  \cite[\S 6.2]{bousseau2019scattering} that, for every $d\in \Z_{>0}$,  the moduli space $M_{d,1}$ has a natural real structure and we denote by $M_{d,1}(\R)$ its real locus.
In particular, 
$M_{d,1}(\R)$ is a smooth compact manifold, and we denote by 
$e(M_{d,1}(\R)) \in \Z$ its topological Euler characteristic.

\begin{conj} \label{conj_real}
For every positive integer $d$, we have 
\[n_{0,d}^{K_{\PP^2},\R}=(-1)^{\frac{d-1}{2}} e(M_{d,1}(\R))\,.\]
\end{conj}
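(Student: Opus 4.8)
The plan is to mirror the complex story developed in the body of the paper, replacing everywhere the Euler characteristic $e(M_{d,1})$ of the complex moduli space by the Euler characteristic $e(M_{d,1}(\R))$ of its real locus, and the Gopakumar-Vafa invariant $n_{0,d}^{K_{\PP^2}}$ by its real analog $n_{0,d}^{K_{\PP^2},\R}$. Recall that in the complex case the chain of identifications is: the vertical rays of $S(\fD^\iin_{\cl^+})$ compute $\Omega_{d}^{\PP^2}=(-1)^{d-1}e(M_{d,1})$ (Theorem \ref{thm_boussseau} and Theorem \ref{thm_joyce_conj}), the same rays compute the relative BPS invariants $\Omega_{d}^{\PP^2/E}$ (Theorem \ref{thm_gabele}), while $\Omega_d^{\PP^2}=3d\,\Omega_d^{\PP^2/E}$ and $\Omega_d^{\PP^2}=n_{0,d}^{K_{\PP^2}}$ (Theorem \ref{thm_local_relative_intro} and Proposition \ref{prop_katz_conj}). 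Conjecture \ref{conj_real} should follow from a parallel chain once each of its three links is established in the real setting.

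First I would set up the real sheaf-counting side by specializing the quantum scattering diagram $S(\fD^\iin_{y^+})$ at the real point $y=-1$ (equivalently $y^{\frac{1}{2}}=i$). The key input is the general principle that, for a smooth projective variety $X$ with a real structure whose cohomology is of Tate type and for which the conjugation action is compatible with the algebraic cell-structure (as holds for the moduli spaces of one-dimensional sheaves on $\PP^2$ underlying \cite{bousseau2019scattering}), the value at $y=-1$ of the Poincar\'e polynomial $\sum_j b_{2j}(X)y^j$ computes, up to a controlled sign, the Euler characteristic $e(X(\R))$ of the real locus --- a Smith-theory / Krasnov-type statement. Granting this, the $y=-1$ specialization of $\Omega_{d,\chi}^{\PP^2}(y^{\frac{1}{2}})$ becomes $\pm e(M_{d,1}(\R))$, and one must check that the quantum wall-crossing identities of \cite{bousseau2019scattering} survive the specialization as genuine real wall-crossing identities. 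The output would be a real scattering diagram $S(\fD^\iin_{-1})$ whose vertical rays encode $e(M_{d,1}(\R))$.

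Next I would prove a real tropical correspondence: that the same real scattering diagram computes real genus-$0$ Gromov-Witten invariants of $(\PP^2,E)$, and hence, via a real local-relative correspondence, the invariants $n_{0,d}^{K_{\PP^2},\R}$. This is the real analog of Theorem \ref{thm_gabele} and should follow the degeneration argument of \cite{gabele2019tropical}, now feeding in a real refinement of the Gross-Pandharipande-Siebert vertex of \cite{MR2667135}: the local contribution at each vertex should, at $y=-1$, reproduce Block-G\"ottsche / Mikhalkin-type real (Welschinger) counts. The vanishing for even $d$ and the choice of Spin structure normalizing $N_{0,1}^{K_{\PP^2},\R}=1$ should then match the parity and the sign $(-1)^{(d-1)/2}$ in the conjecture, while the real local-relative step replacing \cite{MR3948687} would be obtained by adapting the argument of \cite{MR2425184}.

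The main obstacle is that there is at present no established real analog of the Joyce-Song / Meinhardt-Reineke wall-crossing used in \cite{bousseau2019scattering}, so the claim that the $y=-1$ specialization of the quantum scattering diagram literally computes $e(M_{d,1}(\R))$ is the crux. Making this precise requires (i) a genuine real structure on the consistent scattering diagram compatible with the conjugation action on the moduli stacks, (ii) a proof that the integration map from the (real) motivic Hall algebra intertwines the refined wall-crossing with its $y=-1$ reduction, and (iii) control of the signs relating $e(M_{d,1}(\R))$ to the signed specialization, including for the multiple-cover contributions carried by the $\frac{1}{\ell^2}$ terms in the definition of $n_{0,d}^{K_{\PP^2},\R}$. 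Points (ii) and (iii) are genuinely new, and are where I expect the real story to be substantially harder than its complex counterpart; the remaining links are a nontrivial but essentially systematic transcription of the arguments already in the paper.
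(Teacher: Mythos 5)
This statement is a \emph{conjecture} which the paper explicitly leaves open: after sketching essentially the same strategy you describe, the author writes that ``the relevant tools in real Gromov--Witten theory do not seem to be developed enough yet, and so we leave this question open,'' and the only evidence offered is the trivial vanishing for even $d$ and numerical checks for $d\leqslant 7$ against the physics computations of \cite{krefl2009real}. Your proposal is therefore a research program rather than a proof, and you say as much yourself when you flag points (ii) and (iii) as ``genuinely new.'' A proposal that ends by identifying its own crux as unestablished does not prove the statement.

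Beyond that, you misplace where the genuine difficulty sits. The sheaf-theoretic half of your chain --- that the $y=-1$ specialization of the quantum scattering diagram computes $(-1)^{(d-1)/2}e(M_{d,1}(\R))$ --- is \emph{not} the crux: it is already established, since by \cite[Theorem 6.4]{bousseau2019scattering} one has $(-1)^{\frac{d-1}{2}}e(M_{d,1}(\R))=\Omega_{d,1}^{\PP^2}(y^{\frac12}=i)$, and the refined invariants $\Omega_{d,1}^{\PP^2}(y^{\frac12})$ are computed by $S(\fD^{\iin}_{y^+})$ by the main theorem of that paper. No ``real motivic Hall algebra'' or real analogue of Joyce--Song/Meinhardt--Reineke wall-crossing is needed; one simply specializes the complex refined wall-crossing at $y=-1$, the Smith-theoretic identification with the real locus being a separate, already-proved statement about the specific real structure on $M_{d,1}$. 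The actual open gap is entirely on the Gromov--Witten side: $n_{0,d}^{K_{\PP^2},\R}$ is defined via the disk-counting framework of \cite{MR2425184} on $K_{\PP^2}$ with boundary data on $\R\PP^2$, and one would need (a) a real/open analogue of the relative Gromov--Witten theory of $(\PP^2,E)$ compatible with that definition, (b) a real version of the degeneration and tropical correspondence argument of \cite{gabele2019tropical} identifying those counts with the $y=-1$ specialization of the vertical rays (the Welschinger/Block--G\"ottsche analogy you invoke is the right heuristic, but it is only proved for toric surfaces with point insertions, not for maximal tangency with a smooth cubic), and (c) a real local-relative correspondence replacing \cite{MR3948687}. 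None of these exist, and your transcription of the complex arguments does not supply them.
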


As reviewed in Proposition
\ref{prop_katz_conj}, Katz's conjecture
(\cite[Conjecture 2.3]{MR2420017}) is true for $K_{\PP^2}$,
that is, we have $n_{0,d}^{K_{\PP^2}}
=(-1)^{d-1} e(M_{d,1})$, where 
$n_{0,d}^{K_{\PP^2}}$ is the genus $0$
degree $d$ (complex) Gopakumar-Vafa invariant defined in terms of 
genus $0$ Gromov-Witten theory of 
$K_{\PP^2}$, and where $e(M_{d,1})$
is the (complex) topological Euler characteristic of $M_{d,1}$.
Thus, Conjecture \ref{conj_real} is a natural real analogue of Katz's conjecture for $K_{\PP^2}$. Despite this clear analogy, Conjecture \ref{conj_real} seems to be new.

Conjecture \ref{conj_real} is related to the main theme of the present paper because, by \cite[Theorem 6.4]{bousseau2019scattering},
the real Euler characteristic  
$(-1)^{\frac{d-1}{2}} e(M_{d,1}(\R))$ is the specialization at
$y=-1$ ($y^{\frac{1}{2}}=i$) of the refined invariant $\Omega_{d,1}^{\PP^2}(y^{\frac{1}{2}})$, 
which can be computed by the scattering diagram 
$S(\fD^{\iin}_{y^+})$.
Therefore, we can rephrase Conjecture 
\ref{conj_real} as the statement 
that the real Gopakumar-Vafa invariants 
$n_{0,d}^{K_{\PP^2},\R}$ can be computed 
by the specialization at $y=-1$ of the invariants computed by the scattering diagram $S(\fD^{\iin}_{y^+})$, which are essentially $y$-refined tropical invariants in the tropicalization of 
$(\PP^2,E)$ (see  \cref{section_comparison_cps_wall_structure}).
This statement is analogous to the fact that, in the setting of real toric surfaces
with point insertions, Welschinger's counts 
of genus $0$ real curves can be computed by the 
specialization $y=-1$ of the Block-Göttsche 
$y$-refined count of 
tropical curves in 
$\R^2$ \cite{MR2137980, MR3453390}.
Thus, a natural strategy to prove Conjecture 
\ref{conj_real} would be to relate 
$n_{0,d}^{K_{\PP^2},\R}$ to some kind of real version of the Gromov-Witten theory of the pair $(\PP^2,E)$, and then to prove 
a real version of the tropical correspondence theorem of \cite{gabele2019tropical}. 
The relevant tools in real Gromov-Witten theory do not seem to be developed enough yet, 
and so we leave this question open.

Conjecture \ref{conj_real} is true for $d$ even for almost trivial reasons:
we have $n_{0,d}^{K_{\PP^2},\R}=0$, and as 
$\dim M_{d,1}=d^2+1$ is odd, we also have $(-1)^{\frac{d-1}{2}} e(M_{d,1}(\R))=\Omega_{d,1}^{\PP^2}(y^{\frac{1}{2}}=i)=0$ by Poincar\'e duality.

In low degrees, the real Gopakumar-Vafa invariants $n_{0,d}^{K_{\PP^2},\R}$ 
have been computed by some real topological vertex arguments in the physics paper \cite{krefl2009real}. 
Assuming that these arguments can be made completely rigorous, which is likely but apparently not yet done\footnote{I thank Penka Georgieva for a discussion of this point.}, we can compare the explicit results of 
\cite[Table 2, p33]{krefl2009real} for $n_{0,d}^{K_{\PP^2},\R}$ in low degrees, with
explicit computations of
$(-1)^{\frac{d-1}{2}} e(M_{d,1}(\R))=\Omega_{d,1}^{\PP^2}(y^{\frac{1}{2}}=i)$, which can be done using either the known computations of 
$\Omega_{d,1}^{\PP^2}(y^{\frac{1}{2}})$ 
in the literature
(\cite{MR3217411} for
$(d,\chi)=(4,1)$,
\cite{MR3488141, MR3324766,
MR3152206} for $(d,\chi)=(5,1)$,
\cite{MR3319919} for $(d,\chi)=(6,1)$, and \cite[Table 2]{MR3035638} for physics predictions for $d \leqslant 7$), or using our scattering algorithm (see \cite[\S 6.4]{bousseau2019scattering} for examples). We have checked this way that Conjecture \ref{conj_real} holds for $d \leqslant 7$.

\section{Some heuristic explanation}
\label{section_heuristic_explanation}

As in the rest of the paper, we consider a smooth cubic curve $E$
in the complex projective plane
$\PP^2$.
We denote by $V$ the complement of $E$
in $\PP^2$. As $E$ is an anticanonical divisor of $\PP^2$, the noncompact surface $V$ is naturally
holomorphic symplectic.

In this section, we give a heuristic explanation for the main result of the paper: the link between Gromov-Witten invariants of $(\PP^2,E)$ and Donaldson-Thomas invariants of $K_{\PP^2}$ through the scattering diagram $S(\fD^{\iin}_{y^+})$.

From the Gromov-Witten point of view,  $S(\fD^{\iin}_{y^+})$ is contained in
 a tropical version of 
$(\PP^2,E)$, and so morally (in some appropriate tropical limit) in the base of a Strominger-Yau-Zaslow (SYZ) torus fibration on $V$. On the other hand, from the Donaldson-Thomas point of view, $S(\fD^{\iin}_{y^+})$ is contained in a space of stability conditions on the derived category $\D^b_0(K_{\PP^2})$ of coherent sheaves on $K_{\PP^2}$
set-theoretically supported on the zero-section.
More precisely, $S(\fD^{\iin}_{y^+})$ is contained in (a triple cover of) the stringy Kähler moduli space of $K_{\PP^2}$ (see \cref{section_stringy_kahler}).
Therefore, we need to understand why the base of the SYZ fibration on $(\PP^2,E)$ should be indentified with (a triple cover of) the stringy Kähler moduli space of $K_{\PP^2}$.
Furthermore, we would like to understand why the holomorphic curves in the SYZ fibration of $V$, described tropically by 
$S(\fD^{\iin}_{y^+})$, should be related to 
Bridgeland semistable objects in 
$\D^b_0(K_{\PP^2})$, whose wall-crossing behavior is described by $S(\fD^{\iin}_{y^+})$.
Our heuristic explanation will have three steps:
\begin{enumerate}
    \item Hyperkähler rotation for $V$.
    \item Suspension from dimension $2$ to dimension $3$.
    \item Mirror symmetry for $K_{\PP^2}$.
\end{enumerate}

\subsection{Hyperkähler rotation for $V$}
It was recently shown \cite{collins2019special}
that $V$ admits a hyperkähler metric and a
special Lagrangian torus fibration
\[ \pi \colon V \rightarrow B \simeq \R^2\,\] with three singular fibers. 
Tropicalization of holomorphic curves in $V$ naturally live in $B$ (after tropical limit). 
It is also shown in \cite{collins2019special} that, after hyperkähler rotation, this torus fibration becomes
an elliptic fibration 
\[ \pi \colon M \rightarrow B \simeq \C\,,\] 
which is the fiberwise compactified mirror of $\PP^2$
(see \cite[\S 3.1]{MR2257391}).
In other words, despite being very different as algebraic complex manifolds 
($V$ is affine whereas $M$ admits a stucture of elliptic fibration), 
$V$ and $M$ are diffeomorphic and there exists a hyperkähler metric which is compatible with both complex structures.
In order to keep the exposition short, 
we suppress the discussion of the exact match of parameters 
(there is in fact a family of hyperkähler metrics, and the mirror of 
$\PP^2$ has complex and K\"ahler moduli).

Under hyperkähler rotation, holomorphic curves in $V$
with boundary on torus fibers of the Lagrangian torus fibration become open special Lagrangian submanifolds 
in $M$ with boundary on fibers of the elliptic fibration. More precisely, the special Lagrangians submanifolds obtained in that way are special Lagrangian submanifolds of a given specific phase.

\subsection{Suspension from dimension $2$ to dimension $3$}

We refer to \cite{MR2651908} for the topic of suspension in symplectic geometry.
For every $t \in B \simeq \C$
away from critical values of $\pi$, we define a 
noncompact Calabi-Yau 3-fold 
$Y_t$ by the equation $uv=\pi-t$.
The 3-fold $Y_t$ is 
a fibration in affine quadrics over the surface 
$M$, degenerate over the fiber $\pi^{-1}(t)$
in $M$.
By suspension, the open special Lagrangians
in $M$ with boundary on the fiber 
$\pi^{-1}(t)$ of the elliptic fibration
lift to closed special Lagrangians in $Y_t$.

The parameter $t \in B$ is a complex moduli of the Calabi-Yau 3-fold $Y_t$. From a symplectic point of view, $Y_t$ is independent of $t$, and we denote by $F(Y)$ the corresponding compact Fukaya category.
It is a general expectation
\cite{MR1882337, MR1957663, MR3354954} that a choice of
complex structure should define a Bridgeland stability condition on $F(Y)$, with stable objects related to special Lagrangian submanifolds. Therefore, we should be able to view $B$ as a space of Bridgeland stability conditions on $F(Y)$. For $t \in B$, 
the closed special Lagrangian submanifolds obtained by suspension of open special Lagrangian submanifolds in $M$
with boundary on the fiber $\pi^{-1}(t)$
should be $t$-stable objects of $F(Y)$.
Furthermore, the phase of the central charge should be related to the phase of the special Lagrangian.

\subsection{Mirror symmetry for $K_{\PP^2}$}
The last point is to remark that the noncompact Calabi-Yau 3-fold $Y_t$ is mirror to the noncompact Calabi-Yau 3-fold $K_{\PP^2}$, see \cite{MR2651908} \cite{chan2016lagrangian}.
In particular, we have $F(Y)=\D^b_c(K_{\PP^2})$, where $\D^b_c(K_{\PP^2})$ is the derived category of compactly supported sheaves on $K_{\PP^2}$. We have 
$\D^b_0(K_{\PP^2}) \subset \D^b_c(K_{\PP^2})$. Thus, $B$ should be identified with a space of stability conditions on $\D^b_c(K_{\PP^2})$.
Furthermore, for every $t \in B$, counts of special Lagrangians in 
$Y_t$ should coincide with counts of $t$-stable objects in $\D^b_c(K_{\PP^2})$.

\subsection{Conclusion}
Combining the previous steps, starting with a
holomorphic curve in $V=\PP^2-E$ with boundary on 
$\pi^{-1}(t)$, we obtain an open special Lagrangian submanifold in $M$ with boundary on $\pi^{-1}(t)$, then a closed special Lagrangian submanifold in $Y_t$, and finally a
$t$-stable object in $\D^b_c(K_{\PP^2})$.
The special Lagrangians submanifolds obtained from holomorphic curves by hyperkähler rotation have a specific phase, 
and so the resulting $t$-stable objects in 
$\D^b_c(K_{\PP^2})$ have a central charge of specific phase. This explains why 
$S(\fD^{\iin}_{y^+})$ describes semistable objects with a central charge of specific phase (more precisely, purely imaginary).

It is a heuristic explanation and not a proof for at least two reasons.
First, directly constructing Bridgeland stability conditions on Fukaya categories such as $F(Y)$ is quite non-trivial.
Second, even assuming that we can directly prove the existence of the expected Bridgeland stability conditions on $F(Y)$, one should prove that in the hyperkähler rotation step, the virtual counts of holomorphic curves given by Gromov-Witten theory coincide with the virtual counts of special Lagrangians given by applying Donaldson-Thomas theory to $F(Y)$. At the higher-genus/refined level, as in  \cref{section_higher_genus_refinement}, one should understand why counts of higher-genus open curves coincide after the change of variables $y=e^{i\hbar}$ with the refined Donaldson-Thomas invariants extracted from the moduli spaces of special Lagrangians. 
The difficulty is in the virtual aspect of both sides (e.g.\ curves in Gromov-Witten theory are far from embedded in general). A non-trivial indication\footnote{I thank Vivek Shende for a discussion related to this point.} for such correspondence is the known result
\cite{MR2350052, MR2453601, MR3221294} that higher-genus Gromov-Witten invariants of a local curve $\Sigma$ are related through the change of variables 
$y=e^{i\hbar}$ to the weight polynomials of the $GL_n$ character varieties of $\Sigma$ (moduli spaces of $GL_n$ local systems on $\Sigma$ should be thought as local contributions of 
$\Sigma$ to the moduli space of objects 
of the Fukaya category), but a general argument seems to be lacking.

\newpage

\vspace{+8 pt}
\noindent
University of Georgia \\
Department of Mathematics \\
Athens, GA 30605 \\
Pierrick.Bousseau@uga.edu

\end{document}